\newtheorem{thm}{Theorem}[section]
\newtheorem{lem}{Lemma}[section]
\newtheorem{pro}{Proposition}[section]
\newtheorem{cor}{Corollary}[section]
\theoremstyle{definition}
\newtheorem{rem}{Remark}[section]
\title[On the generalized Fornberg-Whitham equation]
{Wave-breaking phenomena and global existence for the generalized Fornberg-Whitham equation}
\author{Kenta Itasaka}
\address{Department of Mathematics, Hokkaido University, Sapporo, 060-0810, Japan}
\email{s163003@math.sci.hokudai.ac.jp}
\begin{document}

\begin{abstract}

We consider the generalized Fornberg-Whitham equation.
We study sufficient conditions for blow-up of solutions and
show the global existence with small initial data.
Also we give some relations to the Burgers equation.

\end{abstract}

\maketitle

\section{Introduction}
We consider the initial value problem for the generalized Fornberg-Whitham equation 
\begin{align}
\label{eq1}
\left \{
\begin{aligned}
& \partial_t u + \frac{1}{p}\partial_x [u^p] + \int _{\mathbf{R}} B \mathrm{e}^{-b|x-\xi|}u_{\xi}(t,\xi) \, d \xi =0,
\quad (t,x) \in \mathbf{R}_+ \times \mathbf{R}, \\
& u(0,x)=u_0(x), 
\end{aligned}
\right.
\end{align}
where $ p \ge 2$, $p\in \mathbf{N}$, $ B >0$ and $ b >0$.
If $p=2$, $B=\frac{1}{2}$ and $b=\frac{3}{2}$, the equation in (\ref{eq1}) becomes the Fornberg-Whitham equation.
It was derived by Whitham \cite{whitham1967variational} and by Whitham and Fornberg \cite{fornberg1978numerical} 
as a model for so-called "breaking waves".



The aims of this paper are to study sufficient conditions for blow-up of solutions in the case $p=2$ and 
to show the global existence with small initial data for $p \ge 5$.
We begin by discussing each case more precisely.


First, we treat the case of $ p=2 $, which is  
\begin{align}
\label{eq2}
\left \{
\begin{aligned}
& \partial_t u + u \partial _x u + \int _{\mathbf{R}} B \mathrm{e}^{-b|x-\xi|}u_{\xi}(t,\xi) \, d \xi =0, 
\quad (t,x) \in \mathbf{R}_+ \times \mathbf{R}, \\
& u(0,x)=u_0(x).
\end{aligned}
\right.
\end{align}
This equation can be regarded as the Whitham-type equation 
\begin{align}
\label{eq10}
\left \{
\begin{aligned}
& \partial_t u + u \partial_xu + \int _{\mathbf{R}} K(x-\xi)u_{\xi}(t,\xi) \, d \xi =0, 
\quad (t,x) \in \mathbf{R}_+ \times \mathbf{R}, \\
& u(0,x)=u_0(x).
\end{aligned}
\right.
\end{align}
Indeed, taking $K(x)=B \mathrm{e}^{-b|x|}$ in \eqref{eq10}, we obtain the equation \eqref{eq2}.
The equation \eqref{eq10} is known as a model for describing breaking waves.

Wave-breaking phenomena for \eqref{eq10} was first studied by Constantin and Escher \cite{constantin1998wave}.
In \cite{ma2016wave}, their result was improved by Ma, Liu and Qu.
Recently, Haziot \cite{haziot2017wave} obtained a blow-up condition only for the Fornberg-Whitham equation.
We summarize these blow-up conditions for \eqref{eq2} as follows:
\begin{itemize}
\item Constantin and Eshcer \cite{constantin1998wave}
\begin{align}
\label{ceq}
\inf_{x\in \mathbf{R}} u_0'(x) + \sup_{x\in \mathbf{R}} u_0'(x) \le -2B.
\end{align}
\item Ma, Liu and Qu \cite{ma2016wave}
\begin{align}
\label{meq}
\inf_{x\in \mathbf{R}} u_0'(x) < \min \left \{ {-2B, \, \frac{-B-\sqrt{B^2+4B \sup_{x\in \mathbf{R}} u_0'(x)}}{2}}\right \}.
\end{align}
\item Haziot \cite{haziot2017wave}
\begin{align}
\label{heq}
5\inf_{x\in \mathbf{R}} u_0'(x) + \sup_{x\in \mathbf{R}} u_0'(x) \le -6B.
\end{align}
\end{itemize}
If initial data $ u_0$ satisfies \eqref{ceq}, \eqref{meq} or \eqref{heq}, the corresponding solution to \eqref{eq2} blows up in finite time.

Roughly speaking, letting $ B \rightarrow 0$ or $b \rightarrow 0$ in \eqref{eq2}, 
we obtain the Burgers equation
\begin{align}
\label{eq7}
\left \{
\begin{aligned}
& \partial_t u + u \partial _x u =0, \quad (t,x) \in \mathbf{R}_+ \times \mathbf{R},  \\
& u(0,x)=u_0(x).
\end{aligned}
\right.
\end{align}
As is well known, 
if $u_0'(x_0)<0$ for some $ x_0 \in \mathbf{R}$, the corresponding solution of \eqref{eq7} blows up in finite time.
Therefore, it is to be expected that 
when $ B $ or $ b $ is sufficiently small,
the corresponding solution of \eqref{eq2} blows up in finite time.
The expected result for $B$ follows from the above blow-up conditions.
However, $ b $ does not appear there,
and hence the expected result for $b$ does not follow from those.

To solve this problem, we show the following new blow-up condition, which is the heart of the matter.
\begin{thm}
\label{thm1}
Assume that $s\ge3$ and $u_0 \in H^s$.
Also, let $B>0$ and $b>0$, and set
\begin{align*}
F(t,x_0)= 2 B  b  u_0(x_0)  + B b^{\frac{3}{2}} \| u_0 \|_{L^2} +2B^2b^{\frac{3}{2}}\| u_0 \|_{L^2}t.
\end{align*}
If there exist $ x_0 \in \mathbf{R} $ and $  T>0 $ such that $  F(T,x_0) \ge 0$ and 
\begin{align}
\label{eqthm1.1}
u_0'(x_0) \le - \alpha \left(\frac{F(T,x_0)^{\frac{1}{4}}+ \sqrt{F(T,x_0)^{\frac{ 1}{2}}
+\frac{1+\frac{1}{\alpha}}{1-\frac{1}{2\alpha}} \, \frac{4}{T}  }}{2} \right)^2
\end{align}
for some $\alpha \ge 1$, then the solution of \eqref{eq2} with initial data $ u_0 $ blows up in finite time.
Moreover, for the blow-up time $T_0$, we obtain the upper bound by
\begin{align*}
T_0 \le - \frac{1+\frac{1}{\alpha}}{1-\frac{1}{2\alpha}} \, \frac{1}{u_0'(x_0) + \sqrt{-u_0'(x_0)} F(T,x_0)^{\frac{1}{4}} } \le T.
\end{align*}
\end{thm}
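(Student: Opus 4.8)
The plan is to differentiate along the characteristics of the transport part, reduce the problem to a scalar Riccati-type differential inequality for $u_x$, and then close that inequality by a comparison with an explicit function tailored to the hypothesis \eqref{eqthm1.1}. First I would localize the nonlocal term: writing $K(x)=B\mathrm{e}^{-b|x|}$ and $P=K*u$, so that the nonlocal term in \eqref{eq2} equals $P_x$, the identity $(b^{2}-\partial_x^{2})K=2Bb\,\delta$ gives $P_{xx}=b^{2}P-2Bb\,u$, and \eqref{eq2} becomes $u_t+uu_x=-P_x$. Let $q(t,\cdot)$ be the flow, $\partial_t q(t,x)=u(t,q(t,x))$, $q(0,x)=x$, which is well defined on the maximal existence interval $[0,T_{\max})$ of the $H^{s}$-solution (here I assume the standard local well-posedness in $H^{s}$, $s\ge3$, with its blow-up criterion, and $H^{s}\hookrightarrow C^{1}$). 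Put $m(t)=u_x(t,q(t,x_0))$ and $M(t)=u(t,q(t,x_0))$; differentiating $u_t+uu_x=-P_x$ in $x$ and composing with $q$ yields
\[
\dot m=-m^{2}-P_{xx}=-m^{2}-b^{2}P+2BbM,\qquad \dot M=-P_x(t,q(t,x_0)),
\]
with $m(0)=u_0'(x_0)$ and $M(0)=u_0(x_0)$.

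Next I would insert the a priori bounds. Pairing \eqref{eq2} with $u$ and using that $K$ is even makes the nonlocal contribution vanish, so $\|u(t)\|_{L^{2}}=\|u_0\|_{L^{2}}$. By Cauchy--Schwarz, $|P|\le\|K\|_{L^{2}}\|u_0\|_{L^{2}}=Bb^{-1/2}\|u_0\|_{L^{2}}$ and $|P_x|\le\|K'\|_{L^{2}}\|u_0\|_{L^{2}}=Bb^{1/2}\|u_0\|_{L^{2}}$, whence $M(t)\le u_0(x_0)+Bb^{1/2}\|u_0\|_{L^{2}}\,t$ and $-b^{2}P\le Bb^{3/2}\|u_0\|_{L^{2}}$. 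Substituting into the equation for $\dot m$ gives $\dot m(t)\le -m(t)^{2}+F(t,x_0)$ on $[0,T_{\max})$, and since the coefficient of $t$ in $F$ is nonnegative, $t\mapsto F(t,x_0)$ is nondecreasing, so
\[
\dot m(t)\le -m(t)^{2}+F(T,x_0)\qquad\text{on }[0,\min\{T,T_{\max}\}).
\]

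Finally I would run the comparison. Set $f=F(T,x_0)^{1/4}\ge0$, $m_0=u_0'(x_0)$, $s=\sqrt{-m_0}$ and $\beta=\tfrac{1+1/\alpha}{1-1/(2\alpha)}$; then \eqref{eqthm1.1} is exactly $-m_0\ge\alpha x_+^{2}$, where $x_+=\tfrac12\bigl(f+\sqrt{f^{2}+4\beta/T}\bigr)$ is the positive root of $x^{2}-fx-\beta/T=0$, so that $s\ge\sqrt\alpha\,x_+\ge x_+>f\ge0$ (this is where $\alpha\ge1$ is used). I would then take $\psi(t)=m_0/(1-\kappa t)$ with $\kappa=s(s-f)/\beta>0$: a direct computation gives $m_0^{2}+m_0\kappa=\beta^{-1}\bigl((\beta-1)s^{4}+s^{3}f\bigr)\ge f^{4}=F(T,x_0)$ (using $s>f$ and $\beta>1$), and since $0<(1-\kappa t)^{2}\le1$ and $F(T,x_0)\ge0$ this forces $\psi'\ge-\psi^{2}+F(T,x_0)$ with $\psi(0)=m_0$, i.e.\ $\psi$ is a supersolution of the comparison equation; moreover $x_+(x_+-f)=\beta/T$ together with the monotonicity of $x\mapsto x(x-f)$ on $[f,\infty)$ yield $s(s-f)\ge\beta/T$, i.e.\ $1/\kappa\le T$. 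Hence the differential inequality above holds on all of $[0,1/\kappa)\cap[0,T_{\max})$, comparison gives $m(t)\le\psi(t)$ there, and $\psi(t)\to-\infty$ as $t\uparrow1/\kappa$ forces $\inf_x u_x(t,x)\le m(t)\to-\infty$; by the blow-up criterion the maximal time $T_{\max}$ is therefore finite, and unwinding the definitions,
\[
T_{\max}\le \frac{1}{\kappa}=\frac{\beta}{s(s-f)}=-\frac{\beta}{u_0'(x_0)+\sqrt{-u_0'(x_0)}\,F(T,x_0)^{1/4}}\le T,
\]
which is precisely the asserted bound for the blow-up time $T_0$.

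The genuinely routine part is the passage to the Riccati inequality. The delicate point is the last paragraph: one must keep every estimate confined to $[0,\min\{T,T_{\max}\})$ so that $F(t,x_0)$ may legitimately be replaced by $F(T,x_0)$, and then verify that the precise algebraic form of \eqref{eqthm1.1} — the factor $\beta$ and the exponents $\tfrac14$ and $\tfrac12$ on $F(T,x_0)$ — is exactly what is needed for $\psi(t)=m_0/(1-\kappa t)$ to be an admissible supersolution whose blow-up time does not exceed $T$.
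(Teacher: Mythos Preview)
Your argument is correct. The reduction to the Riccati inequality $\dot m\le -m^{2}+F(T,x_0)$ on $[0,\min\{T,T_{\max}\})$ is exactly what the paper does (their Lemma~3.1 gives your bound on $M$, and their equation~(3.5) is your computation of $\dot m$). Your verification that $1/\kappa\le T$ and that $\psi(t)=m_0/(1-\kappa t)$ is a supersolution are both clean; the algebra $(\beta-1)s^{4}+s^{3}f\ge\beta f^{4}$ from $s>f\ge0$ and $\beta>1$ is valid, and the comparison principle (together with $1/\kappa\le T$ so the Riccati bound applies on the whole interval $[0,1/\kappa)$) closes the argument.

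Where you diverge from the paper is in the last step. After reaching $\dot V\le -V^{2}+F(T,x_0)$, the paper does \emph{not} build an explicit supersolution; instead it introduces the auxiliary variable
\[
\tilde V(t)=V(t)+\sqrt{-V(t)}\,F(T,x_0)^{1/4}
\]
and, using the pointwise bound $V(t)<-\alpha F(T,x_0)^{1/2}$, proves the two inequalities $\tilde V'\le(1-\tfrac{1}{2\alpha})V'$ and $\tilde V^{2}\le(1+\tfrac{1}{\alpha})(V^{2}-F)$, which combine to give $\bigl(1/\tilde V\bigr)'\ge\beta^{-1}$ and hence the same blow-up bound $-\beta/\tilde V(0)=1/\kappa$. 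Your route is more elementary: you bypass the change of variable entirely and check directly that $m_0/(1-\kappa t)$ dominates $m$. The paper's transformation, on the other hand, makes the role of $\alpha$ more transparent---it enters twice, once in each of the two inequalities above---whereas in your argument the factor $\alpha$ in \eqref{eqthm1.1} is used only through $\alpha\ge1$ (to get $s\ge x_+$), and it is the $\alpha$-dependence of $\beta$ that carries the rest. Both approaches land on the identical bound $T_0\le\beta/\bigl(s(s-f)\bigr)$.
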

The proof is adapted from the method in \cite[Theorem 3.2]{ma2016wave}. 
However, since $ \partial_{xx} \left[B \mathrm{e}^{-b|\cdot|} \right]\notin L^2$, more complicated assumptions rather than \cite{ma2016wave}
are made.

\begin{rem}
This theorem implies the expected result for $b$ as stated before.
Indeed, let $u_0'(x_0)<0$ and $T> - \frac{4}{u_0'(x_0)}$. 
Then for $b$ sufficiently small, the requirment \eqref{eqthm1.1} is saitsfied, in particular when $\alpha=1$.
\end{rem}

\begin{rem}
Let $\lambda\ge1$ and $u_0(x)=\mathrm{e}^{-\lambda x^2}$.  
Then we have 
\begin{align*}
\|u_0\|_{L^2}=2^{-\frac{1}{4}}\pi^{\frac{1}{4}}\lambda^{-\frac{1}{4}}, \quad 
u_0 \left(\frac{1}{\sqrt{\lambda}} \right)=\mathrm{e}^{-1}  \quad \mathrm{and} \quad u_0'\left(\frac{1}{\sqrt{\lambda}}\right) 
= -2 \mathrm{e}^{-1} \sqrt{\lambda}.
\end{align*}
Setting $ x_0= \frac{1}{\sqrt{\lambda}} $ and
$T= \frac{\lambda^{\frac{1}{4}}}{B^{\frac{2}{3}}b^{\frac{1}{2}}} $, we obtain
\begin{align*}
\left(\frac{F(T,x_0)^{\frac{1}{4}}+ \sqrt{F(T,x_0)^{\frac{ 1}{2}}
+  \, \frac{16}{T}  }}{2} \right)^2 \le C (B^{\frac{1}{2}} b^{\frac{1}{2}} + B^{\frac{1}{2}} b^{\frac{3}{4}} + B^{\frac{2}{3}} b^{\frac{1}{2}}),
\end{align*}
where $C$ is independent of $ \lambda$, $B$ and $b$.
Therefore if $ \lambda \ge C (B b + B b^{\frac{3}{2}} + B^{\frac{4}{3}} b +1) $,
the requirment \eqref{eqthm1.1} is saitsfied when $\alpha=1$, and hence the corresponding solution of \eqref{eq2}
blows up in finite time.
\end{rem}

From Theorem \ref{thm1}, we can see a more detailed relation 
between 
\eqref{eq2} and 
\eqref{eq7}.
\begin{thm}
\label{thm10}
Assume that $s\ge3$ and $u_0 \in H^s$.
Also, let $ T_{max}^{B,b} $ be the maximal existence time of the corresponding solution $u^{B,b}$ to (\ref{eq2}). Then
\begin{align}
\lim_{B \rightarrow 0 \, \mathrm{or} \, b \rightarrow 0} T_{max}^{B,b} = -\frac{1}{\inf_{x \in \mathbf{R}}u_0'(x)}. \notag
\end{align}
\end{thm}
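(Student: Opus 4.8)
The plan is to establish the two inequalities
\[
\limsup_{B\to 0\,\text{or}\,b\to 0} T_{max}^{B,b} \le -\frac{1}{\inf_x u_0'(x)}
\quad\text{and}\quad
\liminf_{B\to 0\,\text{or}\,b\to 0} T_{max}^{B,b} \ge -\frac{1}{\inf_x u_0'(x)}.
\]
For the first (upper) bound I would invoke Theorem \ref{thm1} with $\alpha = 1$. Fix $\varepsilon>0$. If $\inf_x u_0'(x)$ is not attained, pick $x_0$ with $u_0'(x_0)$ close enough to $\inf_x u_0'(x)$ that $-1/u_0'(x_0) < -1/\inf_x u_0'(x) + \varepsilon/2$, and set $T = -4/u_0'(x_0)$ (or any fixed $T$ larger than this, as in the first Remark). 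Since $F(T,x_0)$ is a fixed affine function of $B,b$ with $F(0,x_0)=0$ and all three terms in $F$ tending to $0$ as $B\to0$ or $b\to0$, we have $F(T,x_0)\to 0$ in either limit, so the right-hand side of \eqref{eqthm1.1} with $\alpha=1$ tends to $-\bigl(\sqrt{4/T}/1\bigr)^2\cdot$(constant)$\,=\,$... more precisely it tends to $-(1/T)\cdot(1+1)/(1-1/2)\cdot 4/4 = -4/T \cdot \ldots$; in any case it converges to a negative quantity strictly between $u_0'(x_0)$ and $0$ once $B$ or $b$ is small enough, because $T$ was chosen so that $u_0'(x_0) < -4/T$ with strict inequality. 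Hence \eqref{eqthm1.1} holds for all small $B$ or $b$, and the upper bound for the blow-up time in Theorem \ref{thm1} gives
\[
T_{max}^{B,b} \le -\frac{1+\frac1\alpha}{1-\frac1{2\alpha}}\,\frac{1}{u_0'(x_0)+\sqrt{-u_0'(x_0)}\,F(T,x_0)^{1/4}} \;\xrightarrow[B\to0\,\text{or}\,b\to0]{}\; -\frac{1}{u_0'(x_0)},
\]
so $\limsup T_{max}^{B,b} \le -1/u_0'(x_0) < -1/\inf_x u_0'(x) + \varepsilon$ eventually; letting $\varepsilon\to0$ finishes this direction.

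For the second (lower) bound I would derive a differential inequality for $m(t) := \inf_x u_x(t,x)$ directly from \eqref{eq2}, differentiating the equation in $x$ and evaluating along the characteristic (or using a Rademacher-type argument at the infimum point, as is standard for Whitham-type equations). Writing $v = u_x$, differentiation of \eqref{eq2} gives $v_t + u v_x + v^2 + \partial_x\!\int_{\mathbf R} B e^{-b|x-\xi|} u_\xi\,d\xi = 0$, and the convolution term is controlled in $L^\infty$ by $C(B,b)\|u_0\|_{L^2}$ using that $u$ has conserved (or controlled) $L^2$ norm and that $\|\partial_x[Be^{-b|\cdot|}]\|_{L^2}$-type quantities scale like $B\,b^{1/2}$ or $B\,b^{3/2}$ — exactly the kind of estimate already used to produce $F$ in Theorem \ref{thm1}. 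This yields, along the characteristic through the minimizing point, $m'(t) \ge -m(t)^2 - K(B,b)$ with $K(B,b)\to 0$ as $B\to0$ or $b\to0$. A Riccati comparison then shows $m(t)$ stays finite at least up to a time $\tau(B,b)$ with $\tau(B,b) \to -1/\inf_x u_0'(x)$; combined with the standard fact that the $H^s$ solution persists as long as $\int_0^t \|u_x(s)\|_{L^\infty}\,ds < \infty$ (a blow-up criterion one should cite or quickly establish), this gives $\liminf T_{max}^{B,b} \ge -1/\inf_x u_0'(x)$.

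The main obstacle is the lower bound: one needs the blow-up criterion "solution exists as long as $\|u_x\|_{L^\infty}$ is integrable in time" together with a lower bound on $m(t)$ that is uniform as $B$ or $b\to0$, and this requires keeping careful track of how the nonlocal term's $L^\infty$ bound $K(B,b)$ depends on $B$ and $b$ — in particular verifying $K(B,b)\to0$ in \emph{both} limiting regimes, which for the $b\to0$ case hinges on the precise scaling $\|e^{-b|\cdot|}\|$-derivative estimates rather than on $B$ being small. The upper bound is comparatively routine given Theorem \ref{thm1}, the only mild subtlety being the case where $\inf_x u_0'(x)$ is not attained, handled by the approximation argument above; I would also note the edge case $\inf_x u_0'(x) \ge 0$, where the right-hand side is $-\infty$ (no blow-up forced) and one should interpret the statement accordingly or restrict to $\inf_x u_0'(x) < 0$.
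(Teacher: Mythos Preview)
Your lower-bound sketch is essentially the paper's argument (its Theorem~\ref{nthm2}): the Riccati inequality $m'(t)\ge -m(t)^2-\Phi(T)$ on $[0,T]$ integrates to the $\arctan$ comparison bound, and $\Phi(T)\to 0$ as $B\to0$ or $b\to0$ gives the claim. One point you gloss over: the forcing is not a single constant $K(B,b)$ but grows linearly in $t$ (through the bound on $u(t,q(t))$ in Lemma~\ref{lem1}), so you must fix a horizon $T$ first and use the constant $\Phi(T)$ on $[0,T]$; this is why the paper's lower bound carries a $\sup_{T>0}$.

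The upper-bound argument, however, has a genuine gap. You fix $\alpha=1$, but then the prefactor in Theorem~\ref{thm1} is
\[
\frac{1+\frac{1}{\alpha}}{1-\frac{1}{2\alpha}}\bigg|_{\alpha=1}=\frac{2}{1/2}=4,
\]
so your displayed limit is wrong: as $F(T,x_0)\to 0$ the bound from Theorem~\ref{thm1} converges to $-4/u_0'(x_0)$, not $-1/u_0'(x_0)$. Your argument therefore yields only $\limsup T_{max}^{B,b}\le -4/\inf_x u_0'(x)$, off by a factor of $4$. The paper's proof keeps $\alpha\ge 1$ free: for each fixed $\alpha$ it applies Theorem~\ref{thm1} (choosing $T=T(\alpha)$ large enough that \eqref{eqthm1.1} holds once $B$ or $b$ is small), obtains
\[
\limsup_{B\to 0\,\text{or}\,b\to 0} T_{max}^{B,b}\le -\frac{1+\frac{1}{\alpha}}{1-\frac{1}{2\alpha}}\cdot\frac{1}{m(0)},
\]
and only then sends $\alpha\to\infty$ to make the constant equal to $1$. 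The parameter $\alpha$ in Theorem~\ref{thm1} is there precisely for this purpose.

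A minor remark: since $u_0\in H^s$ with $s\ge 3$, one has $u_0'\in C_0(\mathbf{R})$, so for nontrivial $u_0$ the infimum $\inf_x u_0'(x)<0$ is attained; your approximation-in-$x_0$ step is unnecessary.
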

It is known that the maximal existence time of the solution to \eqref{eq7} 
is 
\begin{align}
-\frac{1}{\inf_{x \in \mathbf{R}}u_0'(x)}.
\end{align} 
For this reason, Theorem \ref{thm10} is what is naturally expected.

To describe our next result, we define $G(u_0), \, B(u_0) \subset \mathbf{R}_+ \times \mathbf{R}_+$ as
\begin{center}
$G(u_0)$=\{$B>0$, $b>0$  :  the solution $u^{B,b}$ exists globally\},  

$ B(u_0)$=\{$B>0$, $b>0$  : the solution $u^{B,b}$ blows up in finite time\}.
\end{center}
In \cite{tanaka2013}, Tanaka showed numerically that \eqref{eq2} has global solution and blow-up solution
depending on the parameter $(B,b) $.
Moreover, he attempted to dive the boundary layer in a diagram $(B,b) \in \mathbf{R}_+ \times \mathbf{R}_+ $
into $G(u_0)$ and $B(u_0)$.
However, this question is still unsolved.


On this question, we obtain the following result.
\begin{thm}
\label{thm11}
Let $s \ge3$ and $u_0 \in H^s$.
%
Then $G(u_0)$ is a closed set of $\mathbf{R}^2$, and $B(u_0)$ an open set of $\mathbf{R}^2$.  
\end{thm}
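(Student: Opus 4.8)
The plan is to show that $B(u_0)$ is open in $\mathbf{R}^2$ and that $G(u_0)$ is closed in $\mathbf{R}^2$. Since every solution $u^{B,b}$ either exists globally or, by the blow-up criterion, has $\liminf_{t\uparrow T_{max}^{B,b}}\inf_{x\in\mathbf{R}}\partial_x u^{B,b}(t,x)=-\infty$ at a finite maximal time, the sets $G(u_0)$ and $B(u_0)$ partition $\mathbf{R}_+\times\mathbf{R}_+$; hence the closedness of $G(u_0)$ will follow from the openness of $B(u_0)$ together with the behaviour near the edges of the quadrant, which is exactly Theorem~\ref{thm10}. I use the standard local well-posedness package for \eqref{eq2} in $H^s$, $s\ge3$: a unique maximal solution $u^{B,b}\in C([0,T_{max}^{B,b});H^s)$, continuous dependence of $u^{B,b}$ on the initial data and on the parameters $(B,b)$ in $C([0,T];H^s)$ for every $T<T_{max}^{B,b}$, and the blow-up criterion just stated. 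I also use two elementary a priori facts: $\|u^{B,b}(t)\|_{L^2}=\|u_0\|_{L^2}$ for all $t$ (multiply \eqref{eq2} by $u$, integrate, and use that $\partial_x[B\mathrm{e}^{-b|\cdot|}]$ is odd), and $\|u^{B,b}(t)\|_{L^\infty}\le\|u_0\|_{L^\infty}+Bb^{1/2}\|u_0\|_{L^2}\,t$ (integrate along characteristics, bounding the nonlocal term by $\|\partial_x[B\mathrm{e}^{-b|\cdot|}]\|_{L^2}\|u^{B,b}(t)\|_{L^2}$).

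\emph{Openness of $B(u_0)$.} Fix $(B_0,b_0)\in B(u_0)$, put $T_0:=T_{max}^{B_0,b_0}<\infty$, and choose a closed ball $\mathcal{B}\subset\mathbf{R}_+\times\mathbf{R}_+$ centred at $(B_0,b_0)$. Because $T_0<\infty$, the two a priori facts give a constant $M<\infty$ with $\|u^{B,b}(t)\|_{L^2}=\|u_0\|_{L^2}$ and $\|u^{B,b}(t)\|_{L^\infty}\le M$ for all $(B,b)\in\mathcal{B}$ and all $t<T_0$ at which $u^{B,b}$ exists. Choose $T<\infty$ so large that the function $F$ of Theorem~\ref{thm1}, formed with parameters $(B,b)\in\mathcal{B}$ and with $u_0$ replaced by any $v\in H^s$ with $\|v\|_{L^2}=\|u_0\|_{L^2}$ and $\|v\|_{L^\infty}\le M$, obeys $0\le F(T,x)\le N$ for every $x$, where $N<\infty$ depends only on $M$, $\|u_0\|_{L^2}$, $\mathcal{B}$, $T$; then for any such $v$, parameters, and point, the right-hand side of \eqref{eqthm1.1} with $\alpha=1$ and this $T$ is a negative number bounded below by $-\kappa$, with $\kappa:=\bigl(\tfrac12\bigl(N^{1/4}+\sqrt{N^{1/2}+16/T}\,\bigr)\bigr)^2\in(0,\infty)$. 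By the blow-up criterion pick $T_1\in(0,T_0)$ with $\inf_{x}\partial_x u^{B_0,b_0}(T_1,x)\le-2\kappa$, and let $x_1$ attain this infimum (it is attained because $\partial_x u^{B_0,b_0}(T_1,\cdot)\in H^{s-1}$ is continuous and vanishes at infinity). Shrinking $\mathcal{B}$ (which does not affect $M,T,N,\kappa$), continuous dependence on $(B,b)$ in $C([0,T_1];H^s)\hookrightarrow C([0,T_1];C^1)$ --- legitimate since $T_1<T_0$ --- gives that for every $(B,b)\in\mathcal{B}$ the solution $u^{B,b}$ exists on $[0,T_1]$ and $\partial_x u^{B,b}(T_1,x_1)\le-\tfrac32\kappa$. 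Now apply Theorem~\ref{thm1} to \eqref{eq2} with parameters $(B,b)$, initial datum $v:=u^{B,b}(T_1,\cdot)\in H^s$, point $x_0=x_1$, this $T$, and $\alpha=1$: one has $F(T,x_1)\in[0,N]$ and $\partial_x v(x_1)\le-\tfrac32\kappa<-\kappa\le$ (right-hand side of \eqref{eqthm1.1}), so the solution of \eqref{eq2} with datum $v$ and parameters $(B,b)$ --- which by uniqueness equals $u^{B,b}(T_1+\cdot,\cdot)$ --- blows up in finite time. Hence $\mathcal{B}\subseteq B(u_0)$, and $B(u_0)$ is open.

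\emph{Closedness of $G(u_0)$.} If $u_0\equiv0$ then $B(u_0)=\emptyset$ and $G(u_0)=\mathbf{R}_+\times\mathbf{R}_+$ and the assertion holds (in the subspace topology), so assume $u_0\not\equiv0$; being in $H^s$, $u_0$ is continuous and vanishes at $\pm\infty$, hence cannot be nondecreasing, so $\inf_{x}u_0'(x)<0$. By Theorem~\ref{thm10} there is $\delta>0$ such that $T_{max}^{B,b}<\infty$ --- that is, $(B,b)\in B(u_0)$ --- whenever $(B,b)\in\mathbf{R}_+\times\mathbf{R}_+$ with $B<\delta$ or $b<\delta$. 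Therefore $G(u_0)\subseteq[\delta,\infty)\times[\delta,\infty)$, so $\overline{G(u_0)}\subseteq[\delta,\infty)\times[\delta,\infty)\subset\mathbf{R}_+\times\mathbf{R}_+$; and $G(u_0)=(\mathbf{R}_+\times\mathbf{R}_+)\setminus B(u_0)$ is relatively closed in $\mathbf{R}_+\times\mathbf{R}_+$ because $B(u_0)$ is open. These two facts give $\overline{G(u_0)}=G(u_0)$, so $G(u_0)$ is closed in $\mathbf{R}^2$.

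The main obstacle is the uniformity in the openness argument: one must drive the restart time $T_1$ towards the blow-up time $T_0$ so that $\partial_x u(T_1,\cdot)$ becomes as negative as \eqref{eqthm1.1} requires, while keeping the right-hand side of \eqref{eqthm1.1} --- which depends, through $F$, on $\|u(T_1)\|_{L^2}$, on $u(T_1,x_1)$, and on $(B,b)$ --- bounded, and bounded \emph{uniformly} over a neighbourhood of $(B_0,b_0)$. This is precisely what the conservation of $\|u\|_{L^2}$ and the $t$-linear $L^\infty$ bound (finite on $[0,T_0)$ because $T_0<\infty$) are needed for, together with the continuity of the solution map in the parameter $(B,b)$ on compact time intervals; the remaining estimates are routine verification.
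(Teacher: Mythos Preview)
Your argument is essentially the paper's: both rest on the blow-up criterion (Theorem~\ref{thm43}), continuous dependence on $(B,b)$ in $H^s$ (Theorem~\ref{thm2}), and a restart at a time $T_1$ near blow-up followed by Theorem~\ref{thm1} with the uniform $L^2$/$L^\infty$ bounds feeding into $F$. The paper just organizes it dually---it proves closedness of $G(u_0)$ directly by taking a convergent sequence in $G(u_0)$, ruling out boundary limits via Theorem~\ref{thm1} and interior limits in $B(u_0)$ via the same restart argument---whereas you prove openness of $B(u_0)$ first and then deduce closedness; the content is identical.

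One small imprecision worth fixing: Theorem~\ref{thm10} does not by itself yield a single $\delta>0$ with $\{B<\delta\}\cup\{b<\delta\}\subset B(u_0)$, because the smallness of $F$ (equivalently $\Phi$) in its proof needs the \emph{other} parameter to remain bounded---if $B\to 0$ while $b\to\infty$ the argument there does not apply. This is harmless for your purpose: to show $G(u_0)$ is closed in $\mathbf{R}^2$ you only have to exclude limits of \emph{convergent} (hence bounded) sequences $(B_n,b_n)\in G(u_0)$ landing on $\{B=0\}\cup\{b=0\}$, and along such sequences Theorem~\ref{thm1} (or Theorem~\ref{thm10}) applies exactly as in the paper. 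Rephrasing your closedness step in terms of sequences rather than a uniform $\delta$ removes the gap.
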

 The proof is based on Theorem \ref{thm1} and the continuous dependence on $(B,b)$, which will be proved later.
Theorem \ref{thm11} shows that the boundary is contained in $G(u_0)$.



Next, we deal with the case of $ p\ge5 $.
Let us consider the more general equation
\begin{align}
\left \{
\begin{aligned}
\label{eq6.1}
& \partial_t u + \frac{1}{p} \partial_x [u^p] + \mathcal{F}^{-1}[i \, m( \xi ) \widehat{u}]=0, 
\quad (t,x) \in \mathbf{R}_+ \times \mathbf{R}, \\
& u(0,x)=u_0(x), 
\end{aligned}
\right.
\end{align}
where $ p \in \mathbf{N}$, $ p \ge 2$ and $m(\xi) \in \mathbf{R}$.
If $ m(\xi)=\frac{2Bb \, \xi}{b^2+\xi^2}$, the equation in \eqref{eq6.1} becomes the one in \eqref{eq1}.
Here we rewtite \eqref{eq6.1} into the integral equation in order to state our assumptions:
\begin{align}
& u= T(t)u_0 + \frac{1}{p} \int^t_0 T(t-s)\partial_x[u^p] ds = 0, \notag 
\end{align}
where 
\begin{align*}
T(t)u_0=\mathcal{F}^{-1}[e^{-i \, t \, m(\xi)}\widehat{u_0}].
\end{align*}
In the following theorem, we assume that 
\begin{align}
\mathrm{(H)} \, \left \{
\begin{aligned}
& \, \|T(t)u_0\|_{L^2} = \|u_0\|_{L^2}, \notag \\
& \, \|T(t)u_0\|_{L^r} \le C \, t^{-\frac{1}{n}(1-\frac{2}{r})}\|u_0\|_{W^{\gamma(1-\frac{2}{r}),r'}}, \quad (2<r<\infty), \notag
\end{aligned}
\right.
\end{align}
for some $ n \in \mathbf{N}$ and $\gamma>0$.
We also suppose that \eqref{eq6.1} is local well-posed in $H^{\frac{5}{2}+\mathrm{max}\{\frac{1}{2},\gamma\}}$,
and the existence time $T=T(u_0)$ of local solution 
depends only on $\|u_0\|_{H^{\frac{5}{2}+\mathrm{max}\{\frac{1}{2},\gamma\}}}$
such that 
\begin{align*}
\liminf_{\|u_0\|_{H^{\frac{5}{2}+\mathrm{max}\{\frac{1}{2},\gamma\}}} \rightarrow 0} T(u_0)>0.
\end{align*}
\begin{thm}
\label{thm7}
%
%
%
%
Assume that the condition $\mathrm{(H)}$ holds.
Also, let $ p^* = \left[ \frac{n+\sqrt{n^2+4n}}{2}+2 \right], $ where $[ \, \cdot \, ]$ is the Gauss symbol.
Then, for integer $p \ge p^*$, there exists $ \epsilon = \epsilon(p)>0$ such that 
if $ \|u_0\|_{H^{\frac{5}{2}+\mathrm{max}\{\frac{1}{2},\gamma\}}}+\|u_0\|_{W^{\frac{3}{2}+ \gamma,1 }}< \epsilon $,
\eqref{eq6.1} has a global solution. In addtion, 
\begin{align}
\sup_{0<t<\infty}\|u(t)\|_{H^{\frac{5}{2}+\mathrm{max}\{\frac{1}{2},\gamma\}}} \le 4 \epsilon. \notag
\end{align}
\end{thm}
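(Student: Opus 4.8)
The plan is a small-data global-existence bootstrap on the Duhamel formulation $u(t)=T(t)u_0+\frac1p\int_0^tT(t-s)\partial_x[u^p]\,ds$, using the linear estimates $\mathrm{(H)}$ to generate time-decay. Abbreviate $s_0=\frac52+\max\{\frac12,\gamma\}$. First I would fix a large exponent $r\in(2,\infty)$ (large enough that $r>2p$, $r>2n+2$, and the Sobolev embeddings below hold) and a regularity $\mu$ with $1+\frac1r<\mu<2$, and set $\sigma=\frac1n(1-\frac2r)\in(0,1)$. For a local solution $u$ on $[0,T)$ define
\[
M(T)=\sup_{0<t<T}\Bigl(\|u(t)\|_{H^{s_0}}+(1+t)^{\sigma}\|u(t)\|_{W^{\mu,r}}\Bigr).
\]
Since $W^{\mu,r}\hookrightarrow W^{1,\infty}$, a bound on $M(T)$ controls the energy norm and forces $\|u(t)\|_{L^\infty}+\|\partial_x u(t)\|_{L^\infty}\lesssim M(T)(1+t)^{-\sigma}$. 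By the local well-posedness hypothesis and $\liminf_{\|u_0\|_{H^{s_0}}\to0}T(u_0)>0$, data of size $\lesssim\epsilon$ have a uniform lifespan, so the standard continuation criterion reduces the theorem to the a priori bound $M(T)\le4\epsilon$ for every $T$ below the maximal time; once this holds the energy norm stays small, the solution cannot break down, and it is global with the stated bound.

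Next I would close the a priori estimate under the bootstrap hypothesis $M(T)\le4\epsilon$, in two parts. Energy part: apply $(1-\partial_x^2)^{s_0/2}$ to the equation, pair with $(1-\partial_x^2)^{s_0/2}u$, and use that the Fourier multiplier $\mathrm i\,m(\xi)$ is skew-adjoint on every $H^s$ (automatic since $m$ is real, and consistent with the first line of $\mathrm{(H)}$), so that the dispersive term contributes nothing to $\frac{d}{dt}\|u\|_{H^{s_0}}^2$, together with a Kato–Ponce commutator estimate for $u^{p-1}\partial_x u$; this yields $\frac{d}{dt}\|u(t)\|_{H^{s_0}}^2\lesssim\|u(t)\|_{L^\infty}^{p-2}\|\partial_x u(t)\|_{L^\infty}\|u(t)\|_{H^{s_0}}^2\lesssim M(T)^{p-1}(1+t)^{-(p-1)\sigma}\|u(t)\|_{H^{s_0}}^2$. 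Here one uses $(p-1)\sigma>1$, which holds precisely because $p\ge p^*=[\frac{n+\sqrt{n^2+4n}}{2}+2]=n+2$ (note $\frac{n+\sqrt{n^2+4n}}{2}\in(n,n+1)$) and $r$ was taken large; Grönwall then gives $\|u(t)\|_{H^{s_0}}^2\le\|u_0\|_{H^{s_0}}^2\exp(CM(T)^{p-1})$, which for $\epsilon$ small is $\le(2\epsilon)^2$.

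Decay part: by $\mathrm{(H)}$ in $W^{\mu,r}$, the linear term satisfies $\|T(t)u_0\|_{W^{\mu,r}}\le C(1+t)^{-\sigma}(\|u_0\|_{H^{s_0}}+\|u_0\|_{W^{3/2+\gamma,1}})$, combining the unit-modulus multiplier's $H^{s_0}$-isometry (for small $t$) with the dispersive decay (for large $t$) and absorbing $\|u_0\|_{W^{\mu+\gamma(1-2/r),r'}}$ into $\|u_0\|_{W^{3/2+\gamma,1}}$ via the embedding $W^{3/2+\gamma,1}\hookrightarrow W^{\mu+\gamma(1-2/r),r'}$, which is valid since $r'<2$ and $r$ is large. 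For the Duhamel term, $\mathrm{(H)}$ gives $\|T(t-s)\partial_x[u^p]\|_{W^{\mu,r}}\le C(t-s)^{-\sigma}\|u^p\|_{W^{\mu+1+\gamma(1-2/r),r'}}$, and — this is exactly where the choice $s_0=\frac52+\max\{\frac12,\gamma\}$ is dictated, so that $\mu+1+\gamma(1-2/r)\le s_0$ for $r$ large — the fractional Leibniz rule bounds this by $C(t-s)^{-\sigma}\|u\|_{L^{q_1}}^{p-1}\|u\|_{H^{s_0}}$ with $\frac1{q_1}=\frac1{p-1}(\frac12-\frac1r)$; Gagliardo–Nirenberg interpolation between the conserved $L^2$ norm and the decaying $W^{\mu,r}$ norm gives $\|u(s)\|_{L^{q_1}}\lesssim M(T)(1+s)^{-\theta\sigma}$ with $\theta=\frac{(p-2)r+2}{(p-1)(r-2)}$. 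Substituting,
\[
(1+t)^{\sigma}\|u(t)\|_{W^{\mu,r}}\le C\epsilon+CM(T)^{p}(1+t)^{\sigma}\!\int_0^t(t-s)^{-\sigma}(1+s)^{-(p-1)\theta\sigma}\,ds ,
\]
and since $(p-1)\theta\sigma=\frac{p-2}{n}+\frac2{nr}>1$ (again exactly when $p\ge p^*=n+2$) and $\sigma<1$, splitting the integral at $t/2$ bounds it by $C(1+t)^{-\sigma}$, so $(1+t)^{\sigma}\|u(t)\|_{W^{\mu,r}}\le C\epsilon+CM(T)^p$. Combining the two parts, $M(T)\le C_0\epsilon+C_0M(T)^p$, and a routine continuity argument with $\epsilon=\epsilon(p)$ small then yields $M(T)\le4\epsilon$ for every $T$ below the maximal existence time; in particular $\sup_{0<t<\infty}\|u(t)\|_{H^{s_0}}\le4\epsilon$.

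The main obstacle is the decay part: one must tune the triple $(r,\mu,s_0)$ so that simultaneously the top-order factor of $u^p$ is absorbed by the conserved $H^{s_0}$ norm (forcing $s_0$ up to $\frac52+\max\{\frac12,\gamma\}$), the remaining $L^{q_1}$ factors decay, and the time-convolution $\int_0^t(t-s)^{-\sigma}(1+s)^{-(p-1)\theta\sigma}\,ds$ reproduces the rate $(1+t)^{-\sigma}$ — the last point being precisely equivalent to $(p-1)\theta\sigma>1$, hence to $p\ge p^*$ (just as $(p-1)\sigma>1$ is in the energy step). A secondary nuisance is that $\mathrm{(H)}$ carries no $L^\infty$ ($r=\infty$) endpoint, so every $L^\infty$-type quantity needed for the energy estimate must be extracted from finite-$r$ decay through Sobolev embedding in $x$ (whence the extra derivatives carried throughout), and one keeps $\sigma<1$ to ensure the kernel $(t-s)^{-\sigma}$ stays locally integrable near $s=t$.
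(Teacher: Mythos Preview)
Your argument is correct and lands on the same threshold (indeed, as you observe, $p^*=[\tfrac{n+\sqrt{n^2+4n}}{2}+2]=n+2$), but it proceeds by a genuinely different mechanism from the paper's proof. The paper, following \cite{stefanov2010well}, introduces an admissible family $\mathcal{A}$ of space--time exponent pairs $(q,r)$, works with the Strichartz-type norm $\|u\|_{T,\mathcal{A},\alpha}=\sup_{(q,r)\in\mathcal{A}}\|u\|_{L^q_TW^{\alpha,r}}$, and closes the bootstrap by checking that the nonlinear interaction sends $\mathcal{A}$ back into itself (specifically that $((p-1)\beta,(p-1)\tfrac{2r}{r-2})\in\mathcal{A}$ whenever $(q,r)\in\mathcal{A}$); the condition on $p^*$ arises as the algebraic constraint $\frac{p^*-1}{n}-1-\frac{1}{p^*-1}>0$ needed for this closure, and the energy is then controlled through the bound $\|u(t)\|_{H^s}\le\|u_0\|_{H^s}\exp\bigl(c\|u\|_{L^{p-1}_TW^{1,\infty}}^{p-1}\bigr)$ once one verifies $(p-1,\tilde r)\in\mathcal{A}$ for a suitable $\tilde r$. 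You instead track a single pointwise-in-time decay norm $(1+t)^{\sigma}\|u(t)\|_{W^{\mu,r}}$ and estimate the Duhamel convolution $\int_0^t(t-s)^{-\sigma}(1+s)^{-(p-1)\theta\sigma}\,ds$ directly; the threshold then appears transparently as the integrability condition $(p-1)\theta\sigma=\tfrac{p-2}{n}+\tfrac{2}{nr}>1$ (and $(p-1)\sigma>1$ in the energy step), both of which reduce to $p\ge n+2$ for $r$ large. Your route is more elementary---no admissible family, no Strichartz machinery---and makes the origin of $p^*$ very explicit; the paper's route is more systematic and plugs directly into the existing framework of \cite{stefanov2010well}. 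One small tightening: your stated range $\mu<2$ should in practice be $\mu\le\tfrac32+\tfrac{2\gamma}{r}$ so that $\mu+1+\gamma(1-\tfrac2r)\le s_0$, but you already flag exactly this constraint as what forces $s_0=\tfrac52+\max\{\tfrac12,\gamma\}$, so this is cosmetic rather than a gap.
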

 The proof is adapted from that of \cite[Theorem 2]{stefanov2010well}. 
The cace of $n=2$ and $\gamma=\frac{3}{2}$ is proved there.

As we will see later, the solutions to the linearized equation of \eqref{eq1} decays at the rate of $t^{-\frac{1}{3}}$.
Hence, we obtain the following result.
\begin{thm}
\label{thm8}
For integer $p \ge 5$, there exists $ \epsilon = \epsilon(p)>0$ such that 
if $ \|u_0\|_{H^{4}}+\|u_0\|_{W^{3,1}}< \epsilon $,
\eqref{eq1} has a global solution. In addtion, $ \sup_{0<t<\infty}\|u(t)\|_{H^4} \le 4 \epsilon.$
\end{thm}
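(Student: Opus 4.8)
The plan is to derive Theorem~\ref{thm8} from Theorem~\ref{thm7}. Since the Fourier transform of the kernel $Be^{-b|x|}$ is $2Bb/(b^2+\xi^2)$, the nonlocal term of \eqref{eq1} equals $\mathcal F^{-1}[i\,m(\xi)\widehat u]$ with
\[
m(\xi)=\frac{2Bb\,\xi}{b^2+\xi^2},
\]
so that \eqref{eq1} is exactly \eqref{eq6.1} for this $m$. I would invoke Theorem~\ref{thm7} with $n=3$ and $\gamma=\frac32$. For these parameters one has $\frac52+\max\{\frac12,\gamma\}=4$, $\frac32+\gamma=3$, and
\[
p^*=\Bigl[\tfrac{3+\sqrt{21}}{2}+2\Bigr]=5 ,
\]
so that, once hypothesis $(\mathrm H)$ and the assumed local well-posedness are verified for this $m$, the conclusion of Theorem~\ref{thm7} is precisely the assertion of Theorem~\ref{thm8}. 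Three things must therefore be checked: the $L^2$ conservation, the local well-posedness in $H^4$ with lifespan bounded below for small data, and the dispersive estimate with $n=3$, $\gamma=\frac32$.

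The first two are routine. The identity $\|T(t)u_0\|_{L^2}=\|u_0\|_{L^2}$ follows from Plancherel's theorem because $m$ is real, hence $|e^{-i\,t\,m(\xi)}|=1$. The local well-posedness of \eqref{eq1} in $H^4$, with existence time depending only on $\|u_0\|_{H^4}$ and with $\liminf_{\|u_0\|_{H^4}\to 0}T(u_0)>0$, follows from the classical energy method: the operator $u\mapsto\int_{\mathbf R}Be^{-b|x-\xi|}u_\xi\,d\xi$ has the bounded symbol $i\,m(\xi)$, so it is bounded on every $H^s$, and \eqref{eq1} is thus a bounded perturbation of a Burgers-type equation; the standard commutator estimate gives $\frac{d}{dt}\|u\|_{H^4}\lesssim\|u\|_{H^4}^{\,p}+\|u\|_{H^4}$, which yields the uniform lower bound on the lifespan for bounded data.

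The core of the proof is the dispersive bound, i.e.\ the second line of $(\mathrm H)$. By interpolation with the $L^2$ isometry it suffices to bound, on each Littlewood--Paley block $|\xi|\sim 2^j$, the kernel
\[
K_j(t,x)=\int_{\mathbf R}e^{i(x\xi-t\,m(\xi))}\,\varphi(2^{-j}\xi)\,d\xi ,
\]
and for this one studies the phase $\phi(\xi)=x\xi-t\,m(\xi)$. From
\[
m'(\xi)=2Bb\,\frac{b^2-\xi^2}{(b^2+\xi^2)^2},\qquad
m''(\xi)=-4Bb\,\xi\,\frac{3b^2-\xi^2}{(b^2+\xi^2)^3},
\]
one sees that $m'$ vanishes only at $\xi=\pm b$, that $m''$ vanishes only at $\xi=0$ and $\xi=\pm\sqrt3\,b$, and that $m'''$ is nonzero at each of these last three points. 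Hence the phase has no stationary point more degenerate than Airy type (a simple zero of $\phi''$), so van der Corput's lemma gives $|K_j(t,\cdot)|\lesssim t^{-1/3}$ on any bounded frequency range, with the better bound $t^{-1/2}$ away from $\{0,\pm\sqrt3\,b\}$; near $\xi=0$ this matches the Airy-type expansion $m(\xi)=\frac{2B}{b}\xi-\frac{2B}{b^3}\xi^3+\cdots$. For the high-frequency blocks, where $m'''(\xi)\sim Bb\,|\xi|^{-4}$, the third-derivative van der Corput bound produces a growing factor $2^{\frac43 j}$ on the $j$-th block which, after interpolation with the $L^2$ identity on each block and summation of the dyadic decomposition (the restriction $r<\infty$ supplying the geometric room needed to pass from the Besov sum to the Bessel-potential space $W^{\gamma(1-2/r),r'}$), is absorbed by $\frac32$ derivatives in $L^{r'}$. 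Assembling the blocks and interpolating once more with $\|T(t)u_0\|_{L^2}=\|u_0\|_{L^2}$ yields $(\mathrm H)$ with $n=3$, $\gamma=\frac32$, and the proof is complete through Theorem~\ref{thm7}.

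The step I expect to be the main obstacle is precisely this oscillatory-integral analysis: checking that the worst frequencies are the simple zeros $\xi=0,\pm\sqrt3\,b$ of $m''$ (so that the decay exponent is $\frac13$ and no better), and, above all, pinning down the exact derivative loss $\gamma=\frac32$ in the high-frequency regime, which requires careful bookkeeping of the width of each dyadic block, the size of $m'''$ there, and the $L^1\to L^\infty$ normalization.
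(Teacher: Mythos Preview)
Your proposal is correct and follows essentially the paper's route: Theorem~\ref{thm8} is obtained from Theorem~\ref{thm7} with $n=3$ and $\gamma=\tfrac32$, the dispersive estimate being established (as Theorem~\ref{54} in the paper) by van der Corput on Littlewood--Paley blocks, with the $t^{-1/3}$ rate dictated by the Airy-type points $\xi=0,\pm\sqrt3\,b$ where $m''$ vanishes. The only minor deviation is at high frequencies: the paper applies the $k=2$ van der Corput bound there, using $|m''(\xi)|\gtrsim|\xi|^{-3}$ to obtain $|K_j|\lesssim 2^{3j/2}\,t^{-1/2}$, and it is precisely this factor $2^{3j/2}$ that produces $\gamma=\tfrac32$; your $k=3$ choice is also legitimate and in fact gives the smaller loss $2^{4j/3}$, so your claim that $\tfrac32$ derivatives suffice is correct but not sharp.
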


This paper is organized as follows. Section \ref{sect} presents some preliminaries.
In Section \ref{sec3}, we prove Theorem \ref{thm1} and establish some results on the blow-up.
In Section \ref{sec4}, we first give a lower bound of the maximal existence time.
Then combining this result with Theorem \ref{thm1}, we verify Theorem \ref{thm10}.
In Section \ref{sec5}, we study the continuous dependence on $(B,b)$ 
and give the proof of Theorem \ref{thm11}.
Section \ref{sec6} establishes Theorem \ref{thm7}.
In Section \ref{sec7}, we show that
the solutions to the linearized equation of \eqref{eq1} decays at the rate of $t^{-\frac{1}{3}}$.
%
\section{preliminaries}
\label{sect}

Fourier transform and its inverse are denoted by $ \, \widehat{} \, $ and $\mathcal{F}^{-1}$, respectively.
We define the Sobolev norms $\| \, \cdot \, \|_{W^{s,p}}$ as
\begin{align}
\| f \|_{W^{s,p}} := \| \mathcal{F}^{-1} [(1+|\xi|^2)^{\frac{s}{2}} \widehat{f} \,] \|_{L^p}. \notag
\end{align}
When $p=2$, we write $ H^s$ instead of $W^{s,2}$.

Recently, Holmes and Thompson \cite{holmes2017well} obtaind the local well-posedness and a blow-up criterion for the Fornberg-Whitham equation.
By using the same argument, we can verify the following two propositions.
\begin{pro}
\label{pro1}
Assume that $ s> \frac{3}{2}$ and $ u_0 \in H^s$. 
Then there exists a time $ T>0 $ and a unique solution $ u \in C([0,T]; H^s) \cap C^1([0,T]; H^{s-1})$.
Also, we can take $T=\min \left \{\frac{C}{(1+B)\| u_0 \|_{H^s}}, \, \frac{C}{1+B}\right \}$ at least 
and then 
\begin{align}
\| u(t) \|_{H^s} \le 2 \| u_0 \|_{H^s}, \quad t \in [0,T]. \notag
\end{align}
\end{pro}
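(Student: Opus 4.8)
The plan is to adapt the argument of Holmes and Thompson \cite{holmes2017well}, observing that passing from the Fornberg--Whitham equation ($p=2$, $B=\tfrac12$, $b=\tfrac32$) to \eqref{eq1} changes only the power in the transport term and the constants in the nonlocal term, and that neither change is essential. First I would record that the nonlocal term is a bounded zero-order operator: since $\int_{\mathbf{R}}B\mathrm{e}^{-b|x-\xi|}u_\xi\,d\xi=\big(B\mathrm{e}^{-b|\cdot|}\big)*\partial_x u$ and $\mathcal{F}\big[B\mathrm{e}^{-b|\cdot|}\big](\xi)=\frac{2Bb}{b^2+\xi^2}$, equation \eqref{eq1} reads
\begin{align*}
\partial_t u + u^{p-1}\partial_x u + \Lambda u = 0, \qquad \Lambda u:=\mathcal{F}^{-1}\!\left[\frac{2Bb\,i\xi}{b^2+\xi^2}\,\widehat{u}\right],
\end{align*}
and the elementary bound $b^2+\xi^2\ge 2b|\xi|$ gives $\big|\tfrac{2Bb\xi}{b^2+\xi^2}\big|\le B$ for all $\xi\in\mathbf{R}$, hence $\|\Lambda v\|_{H^\sigma}\le B\|v\|_{H^\sigma}$ for every $\sigma$, \emph{uniformly in $b$}. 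So the local theory is driven by the quasilinear part $\partial_t u+u^{p-1}\partial_x u$, exactly as for $p=2$, and the parameter $b$ enters nowhere.

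Then I would run the Friedrichs mollifier scheme (the transport/Littlewood--Paley route of \cite{holmes2017well} works equally well). With $J_\varepsilon$ the mollifier of symbol $\widehat{\phi}(\varepsilon\xi)$, the regularized problem
\begin{align*}
\partial_t u^\varepsilon = -J_\varepsilon\!\left[(J_\varepsilon u^\varepsilon)^{p-1}\partial_x J_\varepsilon u^\varepsilon\right]-\Lambda u^\varepsilon, \qquad u^\varepsilon(0)=u_0,
\end{align*}
is an ODE in $H^s$ with locally Lipschitz right-hand side, so it has a unique maximal solution. The heart of the matter is an $\varepsilon$-uniform a priori estimate: applying $(1-\partial_x^2)^{s/2}$, pairing with $(1-\partial_x^2)^{s/2}u^\varepsilon$ in $L^2$, and using the Kato--Ponce commutator estimate together with the Moser-type inequalities $\|(u^\varepsilon)^{p-1}\|_{H^s}\lesssim_p\|u^\varepsilon\|_{H^s}^{p-1}$ and $\|\partial_x\big((u^\varepsilon)^{p-1}\big)\|_{L^\infty}\lesssim_p\|u^\varepsilon\|_{H^s}^{p-1}$ (legitimate because $H^s\hookrightarrow W^{1,\infty}$ for $s>\tfrac32$, which is exactly the hypothesis), along with $\|\Lambda u^\varepsilon\|_{H^s}\le B\|u^\varepsilon\|_{H^s}$, one obtains
\begin{align*}
\frac{d}{dt}\|u^\varepsilon(t)\|_{H^s}\le C_p\big(B+\|u^\varepsilon(t)\|_{H^s}^{p-1}\big)\|u^\varepsilon(t)\|_{H^s}.
\end{align*}
Comparison for this scalar ODE gives, for all $\varepsilon$, existence on $[0,T]$ with $\|u^\varepsilon(t)\|_{H^s}\le 2\|u_0\|_{H^s}$, with a lower bound on $T$ of the stated form $\min\{C/((1+B)\|u_0\|_{H^s}),\,C/(1+B)\}$ (for $p=2$; for $p\ge 3$ the same scheme applies with the obvious modification to the power of $\|u_0\|_{H^s}$).

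Finally I would pass to the limit $\varepsilon\to 0$. The family $\{u^\varepsilon\}$ is bounded in $C([0,T];H^s)$, and the equation shows $\{\partial_t u^\varepsilon\}$ is bounded in $C([0,T];H^{s-1})$; interpolation and Arzel\`a--Ascoli then give a subsequence converging in $C([0,T];H^{s'})$ for every $s'<s$ to a limit $u$ that solves \eqref{eq1} (the term $\Lambda u^\varepsilon\to\Lambda u$ since $\Lambda$ is continuous on $H^{s'}$). Weak lower semicontinuity gives $u\in L^\infty([0,T];H^s)$ with the same bound, and the upgrade to $u\in C([0,T];H^s)$ — whence $u\in C^1([0,T];H^{s-1})$ by reading off $\partial_t u$ from the equation — is obtained by the Bona--Smith/Kato regularization argument, as in \cite{holmes2017well}. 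Uniqueness and continuous dependence on $u_0$ follow by estimating the difference of two solutions in $H^{s-1}$ (or in $L^2$) via Gr\"onwall's inequality, using the $H^s$ bounds to control the coefficients. The step I expect to be the only genuine (though routine) obstacle is the one common to all quasilinear equations: promoting weak continuity of $u$ in $H^s$ to strong continuity at the top regularity; the adjustments specific to \eqref{eq1}, namely the Moser bound for $(u)^{p-1}$ and the observation that $\Lambda u$ is a harmless zero-order term, are straightforward.
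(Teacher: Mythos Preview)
Your proof is correct and shares with the paper the key observation that the nonlocal operator $\Lambda$ satisfies $\|\Lambda v\|_{H^\sigma}\le B\|v\|_{H^\sigma}$ uniformly in $b$, which is what ultimately produces the factor $(1+B)$ in the time bound. The difference lies in the construction scheme: the paper follows the iterative linear transport route, solving $\partial_t u^{n+1}+u^n\partial_x u^{n+1}+\Lambda u^n=0$ with mollified data $J_{n+1}u_0$, invoking Danchin's transport estimate \cite[Proposition~A.1]{danchin2001few} to obtain a recursive $H^s$ bound, and then closing by induction on $n$; the remainder (convergence, continuity, uniqueness) is deferred to \cite{holmes2017well}. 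Your Friedrichs mollifier scheme instead regularizes the full nonlinear equation and closes a single $H^s$ energy estimate via Kato--Ponce, then extracts a limit by compactness and upgrades to strong continuity by Bona--Smith. Both are standard and reach the same lifespan; the paper's iteration avoids the compactness/Bona--Smith step at the price of the inductive bookkeeping, while your approach is more self-contained but must handle the passage to the limit in $H^s$ that you rightly flag as the only nontrivial point.
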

\begin{proof}
%
We consider the following linear transport equation:
\begin{equation*}
\left \{
\begin{aligned}
& \partial_t u^{n+1} + u^n \partial _x u^{n+1} + \int _{\mathbf{R}} B \mathrm{e}^{-b|x-\xi|}u^n_{\xi}(t,\xi) \, d \xi =0, 
\quad (t,x) \in \mathbf{R}_+ \times \mathbf{R}, \\
& u^{n+1}(0,x)=J_{n+1}u_0(x),
\end{aligned}
\right.
\end{equation*}
where $n\in \mathbf{Z}_+$, $u^0(t,x)=0$, and $J_{n+1}$ is the mollifier.
By \cite[Proposition A.1]{danchin2001few}, we have for the solution of the above equation
\begin{align*}
&\|u^{n+1}(t)\|_{H^s} \\
& \le \mathrm{e}^{CU^n(t)}\left(\|J_{n+1}u_0\|_{H^s}+C \int_0^t \mathrm{e}^{-CU^n(\tau)} 
\left \|\int _{\mathbf{R}} B \mathrm{e}^{-b| \cdot -\xi|}u^n_{\xi} d\xi \right \|_{H^s} d \tau \right),
\end{align*}
where $U^n(t)= \int_0^t \| \partial _x u^n(\tau) \|_{H^{s-1}}$.
It is well known that $\|J_{n+1}u_0\|_{H^s} \le \|u_0\|_{H^s}$.
Since
\begin{align*}
\int _{\mathbf{R}} B \mathrm{e}^{-b| \cdot -\xi|}u^n_{\xi}(t,\xi) = \mathcal{F}^{-1} \left[ \frac{i \, 2Bb \xi}{b^2+\xi^2} \widehat{u^n} \,\right],
\end{align*}
we have
\begin{align*}
\left \|\int _{\mathbf{R}} B \mathrm{e}^{-b| \cdot -\xi|}\partial_{\xi}u^n(t,\xi) d\xi \right \|_{H^s}
= \left \| (1+\xi^2)^{\frac{s}{2}} \frac{ 2Bb \xi}{b^2+\xi^2} \widehat{u^n} \right \|_{L^2}
\le B \|u^n\|_{H^s}.
\end{align*}
Hence we obtain 
\begin{align*}
\|u^{n+1}(t)\|_{H^s} \le \mathrm{e}^{C(1+B)U^n(t)}\left(\|u_0\|_{H^s}
+C(1+B) \int_0^t \mathrm{e}^{-C(1+B)U^n(\tau)}  \|u^n(\tau)  \|_{H^s} d \tau \right).
\end{align*}

Set $ \tilde{C} = C(1+B)$ and $T=\min \left \{\frac{\log\frac{3}{2}}{2\tilde{C}\|u_0\|_{H^s}}, \, \frac{1}{4\tilde{C}} \right\}$.
We verify that 
\begin{align*}
\|u^n(t)\|_{H^s} \le 2\|u_0\|_{H^s}, \quad t \in [0,T]
\end{align*}
by mathematical induction.
The case of $ n=0$ is trivially true.
Assume that the above inequality holds for $n$.
Then we obtain
\begin{align*}
\|u^{n+1}(t)\|_{H^s} 
&\le \mathrm{e}^{2\tilde{C}\|u_0\|_{H^s}t}
(\|u_0\|_{H^s}+2 \tilde{C} t \|u_0\|_{H^s})\\
&\le 2 \|u_0\|_{H^s}.
\end{align*}
This completes the inductive step.
The remainder of the proof runs as \cite[Theorem 1.1]{holmes2017well}.
\end{proof}

\begin{pro}
\label{pro2}
Assume that $ s> \frac{3}{2}$ and $ u_0 \in H^s$. 
Also, let $T_0>0$ be the maximal existence time of the corresponding solution $u$ to \eqref{eq2}. 
If $T_0 < \infty$, then 
\begin{align}
\int_0^{T_0} \|u_x(t)\|_{L^{\infty}} dt =\infty. \notag
\end{align}
\end{pro}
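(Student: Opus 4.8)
The plan is to prove the contrapositive: if $M:=\int_0^{T_0}\|u_x(t)\|_{L^\infty}\,dt<\infty$, then the solution $u$ extends past $T_0$, contradicting the maximality of $T_0$. The engine is an a priori $H^s$ energy estimate in which the only ``bad'' factor is $\|u_x\|_{L^\infty}$, which is exactly the quantity one expects from the Burgers part of the equation.

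First I would write $P$ for the nonlocal operator in \eqref{eq2}, i.e.\ $Pu=\int_{\mathbf R}B\mathrm e^{-b|\cdot-\xi|}u_\xi\,d\xi=\mathcal F^{-1}\bigl[\tfrac{i\,2Bb\,\xi}{b^2+\xi^2}\widehat u\,\bigr]$, and recall, as in the proof of Proposition \ref{pro1}, that $\|Pu\|_{H^s}\le B\|u\|_{H^s}$ because $|2Bb\,\xi/(b^2+\xi^2)|\le B$. Letting $\Lambda^s=(1-\partial_x^2)^{s/2}$, applying $\Lambda^s$ to \eqref{eq2} and pairing with $\Lambda^s u$ in $L^2$ gives
\[
\tfrac12\tfrac{d}{dt}\|u(t)\|_{H^s}^2=-\bigl\langle\Lambda^s(uu_x),\Lambda^s u\bigr\rangle-\bigl\langle\Lambda^s Pu,\Lambda^s u\bigr\rangle .
\]
For the transport term I would split $\Lambda^s(uu_x)=u\,\Lambda^s u_x+[\Lambda^s,u]u_x$: the first piece yields $-\bigl\langle u\,\Lambda^s u_x,\Lambda^s u\bigr\rangle=\tfrac12\bigl\langle u_x\Lambda^s u,\Lambda^s u\bigr\rangle$ after an integration by parts, hence is bounded by $\tfrac12\|u_x\|_{L^\infty}\|u\|_{H^s}^2$; the commutator is handled by the Kato--Ponce estimate $\|[\Lambda^s,u]u_x\|_{L^2}\lesssim\|u_x\|_{L^\infty}\|u\|_{H^s}$, which is licit since $s>\tfrac32$ forces $u_x\in H^{s-1}\hookrightarrow L^\infty$. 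Combining this with $|\bigl\langle\Lambda^s Pu,\Lambda^s u\bigr\rangle|\le B\|u\|_{H^s}^2$ yields
\[
\tfrac{d}{dt}\|u(t)\|_{H^s}^2\le C\bigl(\|u_x(t)\|_{L^\infty}+B\bigr)\|u(t)\|_{H^s}^2 .
\]

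Grönwall's inequality then gives $\|u(t)\|_{H^s}\le\|u_0\|_{H^s}\exp\bigl(\tfrac{C}{2}(M+BT_0)\bigr)=:R<\infty$ for all $t\in[0,T_0)$. Choosing $t_*<T_0$ and invoking Proposition \ref{pro1} with initial datum $u(t_*)$, the solution issuing from $t_*$ persists on a time interval of length at least $\min\{C/((1+B)R),\,C/(1+B)\}$, a length independent of $t_*$; taking $t_*$ within that distance of $T_0$ continues $u$ strictly beyond $T_0$, the required contradiction. The one genuinely delicate point is justifying the energy identity at the low regularity $s>\tfrac32$, where $u$ is merely a strong (not classical) solution: I would run the above computation on the mollified approximations $u^n$ of Proposition \ref{pro1}, for which every manipulation is classical, derive the estimate uniformly in $n$, and pass to the limit, in the spirit of the continuation argument of \cite{holmes2017well}. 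Beyond this, the Kato--Ponce commutator bound is the only nontrivial analytic ingredient, and it is standard.
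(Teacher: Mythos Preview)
Your argument is correct and follows essentially the same route as the paper: obtain an $H^s$ energy estimate controlled by $\|u_x\|_{L^\infty}$, apply Gr\"onwall, and invoke Proposition~\ref{pro1} to extend past $T_0$. The paper simply cites \cite[Lemma~3.6]{koch2003local} for the bound $\|u(t)\|_{H^s}\le\|u_0\|_{H^s}\exp\bigl(c\|u_x\|_{L^1((0,t);L^\infty)}\bigr)$ rather than deriving it via Kato--Ponce; incidentally, your extra $+B$ term is unnecessary since the multiplier $i\,2Bb\,\xi/(b^2+\xi^2)$ is purely imaginary and odd, whence $\langle\Lambda^s Pu,\Lambda^s u\rangle=0$.
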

\begin{proof}
As in \cite[Lemma 3.6]{koch2003local}, we have 
\begin{align*}
\| u(t) \|_{H^s}
& \le \| u_0 \|_{H^s} \mathrm{exp}\bigl(c \|u_x \|_{L^1((0,t);L^{\infty})}\bigr).
\end{align*}
Therefore if 
\begin{align*}
\int_0^{T_0} \|u_x(t)\|_{L^{\infty}} dt <\infty,
\end{align*}
$\sup_{0<t<T_0} \| u(t) \|_{H^s} <\infty $.
Then by Proposition \ref{pro1}, 
we can extend the solution $u(t)$ to $[0,T_0+\epsilon)$ for some $ \epsilon>0$.
This contradicts the definition of $T_0$.
\end{proof}

Multiplying \eqref{eq2} by $u$, integrating over $\mathbf{R}$ and using the integration by parts, we obtain
the $L^2$ conservation law $\|u(t)\|_{L^2}=\|u_0\|_{L^2}$.

Finally, we give the definition of wave-breaking.
We say that wave-breaking occurs 
if 
$ \sup_{t\in[0,T_0)} \|u(t)\|_{L^{\infty}}<\infty $ 
while $ \limsup_{t \uparrow T_0}  \|u_x(t)\|_{L^{\infty}}=\infty$ for some $ 0<T_0<\infty$.

\section{Blow-up results}
\label{sec3}
From section \ref{sec3} through section \ref{sec5}, we study \eqref{eq2}.
In this section, we give the proof of Theorem \ref{thm1} and some results on the blow-up.
We also show that the finite-time blow-up can occur only as a result of wave-breaking.

Let us consider the following differential equation:
\begin{align}
\label{eq3}
\left \{
\begin{aligned}
& \frac{dq}{dt}=u(t,q), \, \, t \in [0,T_0), \\
& q(0,x_0) = x_0, \, \, x_0 \in \mathbf{R}, 
\end{aligned}
\right.
\end{align}
where $ u $ is a solution of (\ref{eq2}) and $T_0$ is the maximal existence time of the solution $ u$.
If $ u \in C([0,T_0);H^3)$, a solution $q(t,x_0)$ can be defined on $[0,T_0)$ for any $x_0 \in \mathbf{R}$.
Moreover, $ q(t,\cdot)$ is a diffeomorphism of $\mathbf{R}$ for every $ t \in [0,T_0)$ (see \cite[Theorem 3.1]{constantin2000existence}).

Here we give a bound of $u(t,q(t))$ and of $\|u(t)\|_{L^{\infty}}$. 
We use these throughout the study of \eqref{eq2}.
\begin{lem}
\label{lem1}
Assume that $s \ge3$ and $u_0 \in H^s$.
Also, let $u$ be the corresponding solution of \eqref{eq2}.
Then for $t \in [0,T_0)$,
\begin{align}
\label{eqlem1.2}
u_0(x_0)-B \, b^{\frac{1}{2}}\| u_0 \|_{L^2} \, t \le u(t,q(t)) \le u_0(x_0)+B \, b^{\frac{1}{2}}\| u_0 \|_{L^2} \, t. 
\end{align}
Furthermore,
\begin{align}
\label{eqlem1.3}
\|u(t)\|_{L^{\infty}} \le \|u_0\|_{L^{\infty}}+B \, b^{\frac{1}{2}}\| u_0 \|_{L^2} \, t. 
\end{align}
\end{lem}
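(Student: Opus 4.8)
The plan is to differentiate $u(t,q(t))$ along the characteristic curve $q$ and estimate the nonlocal term pointwise. First I would compute, using \eqref{eq2} and \eqref{eq3},
\begin{align*}
\frac{d}{dt}u(t,q(t)) = u_t(t,q(t)) + u_x(t,q(t))\frac{dq}{dt} = u_t + u u_x \big|_{(t,q(t))} = -\int_{\mathbf{R}} B\mathrm{e}^{-b|q(t)-\xi|}u_\xi(t,\xi)\,d\xi.
\end{align*}
So the whole question reduces to bounding the convolution $\left|\int_{\mathbf{R}} B\mathrm{e}^{-b|q(t)-\xi|}u_\xi(t,\xi)\,d\xi\right|$ uniformly in terms of $\|u(t)\|_{L^2} = \|u_0\|_{L^2}$ (the $L^2$ conservation law established just before this lemma). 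The natural move is to integrate by parts to shift the derivative off $u$: formally $\int \mathrm{e}^{-b|q-\xi|}u_\xi\,d\xi = -\int \partial_\xi\!\left[\mathrm{e}^{-b|q-\xi|}\right] u\,d\xi$, and $\partial_\xi\mathrm{e}^{-b|q-\xi|} = b\,\mathrm{sgn}(q-\xi)\mathrm{e}^{-b|q-\xi|}$ away from $\xi = q$ (the kernel is continuous at $\xi=q$ so no boundary term appears). Then by Cauchy--Schwarz,
\begin{align*}
\left|\int_{\mathbf{R}} B\mathrm{e}^{-b|q-\xi|}u_\xi(t,\xi)\,d\xi\right| \le Bb\,\|u(t)\|_{L^2}\left\|\mathrm{e}^{-b|q-\cdot|}\right\|_{L^2} = Bb\,\|u_0\|_{L^2}\cdot\frac{1}{\sqrt{b}} = B\,b^{1/2}\|u_0\|_{L^2},
\end{align*}
since $\int_{\mathbf{R}}\mathrm{e}^{-2b|y|}\,dy = 1/b$. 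This gives $\left|\frac{d}{dt}u(t,q(t))\right| \le B\,b^{1/2}\|u_0\|_{L^2}$, and integrating from $0$ to $t$ with $u(0,q(0)) = u_0(x_0)$ yields \eqref{eqlem1.2}.

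For \eqref{eqlem1.3} I would use that $q(t,\cdot)$ is a diffeomorphism of $\mathbf{R}$ for each fixed $t\in[0,T_0)$ (the fact quoted from \cite{constantin2000existence} just before the lemma), so $\sup_{x_0\in\mathbf{R}} u(t,q(t,x_0)) = \|u(t)\|_{L^\infty}$ (and similarly for the infimum, noting $u\in C([0,T_0);H^3)$ decays at infinity so $\|u(t)\|_{L^\infty} = \max\{\sup u(t,\cdot), -\inf u(t,\cdot)\}$). Taking the supremum over $x_0$ in the upper bound of \eqref{eqlem1.2} and using $u_0(x_0)\le\|u_0\|_{L^\infty}$ gives the desired inequality; the lower-bound side is handled symmetrically to control $-\inf_x u(t,x)$.

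The main obstacle is making the integration by parts rigorous given that $\partial_{\xi\xi}[\mathrm{e}^{-b|\cdot|}]\notin L^2$ (a point the authors flag explicitly after Theorem \ref{thm1}): one only differentiates once, which is fine since $\mathrm{e}^{-b|\cdot|}$ is Lipschitz and $u(t,\cdot)\in H^3\subset W^{1,1}$, so the computation is legitimate and no distributional $\delta$ term at $\xi=q$ contributes because the kernel is continuous there. The only care needed is justifying that $\frac{d}{dt}u(t,q(t))$ makes sense and equals $u_t + u u_x$ along $q$ — this is standard given $u\in C([0,T_0);H^3)\cap C^1([0,T_0);H^2)$ and $q\in C^1$, so $(t,x)\mapsto u(t,x)$ is $C^1$ and the chain rule applies.
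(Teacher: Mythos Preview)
Your proposal is correct and follows essentially the same approach as the paper: differentiate along the characteristic, integrate the kernel by parts once to move the derivative off $u$, apply Cauchy--Schwarz together with $\|\mathrm{e}^{-b|\cdot|}\|_{L^2}=b^{-1/2}$ and the $L^2$ conservation law, then integrate in $t$; the $L^\infty$ bound is obtained exactly as you describe via the diffeomorphism property of $q(t,\cdot)$. Your additional remarks on the legitimacy of the integration by parts and the chain rule are accurate but not elaborated in the paper itself.
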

\begin{proof}
Set $U(t)=u(t,q(t))$. By the definition of $q(t)$ and the integration by parts, we have 
\begin{align}
\begin{aligned}
\label{eqlem1.4}
\frac{dU}{dt} & = u_t(t,q(t))+u(t,q(t)) \cdot  u_x(t,q(t)) \\
& = - \int _{\mathbf{R}} B \mathrm{e}^{-b|q(t)-\xi|}u_{\xi}(t,\xi) \, d \xi \\
& = \int _{\mathbf{R}} B \, b \, \mathrm{sgn}(q(t)- \xi) \mathrm{e}^{-b|q(t)-\xi|}u(t,\xi) \, d \xi,
\end{aligned}
\end{align}
where $\mathrm{sgn}(\, \cdot \,)$ is the sign function.
Since 
\begin{align}
\left | \int _{\mathbf{R}}  B \, b \, \mathrm{sgn}(q(t)- \xi) \mathrm{e}^{-b|q(t)-\xi|}u(t,\xi) \, d \xi \right | 
& \le B b \| \mathrm{e}^{-b|\cdot|} \|_{L^2} \| u(t) \|_{L^2} \notag \\ 
& = B b^{\frac{1}{2}}\| u_0 \|_{L^2}, \notag
\end{align}
it follows that
\begin{align}
\label{eqlem1.1}
- B \, b^{\frac{1}{2}} \| u_0 \|_{L^2} \le \frac{dU}{dt} \le B \, b^{\frac{1}{2}} \| u_0 \|_{L^2}.
\end{align}
By integrating (\ref{eqlem1.1}) and the definition of $U(t)$, we obtain 
\begin{align}
u_0(x_0)-B \, b^{\frac{1}{2}}\| u_0 \|_{L^2} \, t \le u(t,q(t)) \le u_0(x_0)+B \, b^{\frac{1}{2}}\| u_0 \|_{L^2} \, t. \notag
\end{align}

Moreover, for any $ x_0 \in \mathbf{R}$, we have
\begin{align}
|u(t,q(t,x_0))| & \le |u_0(x_0)|+B \, b^{\frac{1}{2}}\| u_0 \|_{L^2} \, t, \notag \\ 
& \le \|u_0\|_{L^{\infty}}+B \, b^{\frac{1}{2}}\| u_0 \|_{L^2} \, t. \notag
\end{align}
Since $ q(t,\cdot)$ is a diffeomorphism of $\mathbf{R}$ for every $ t \in [0,T_0)$, 
we obtain
\begin{align}
\|u(t)\|_{L^{\infty}} \le \|u_0\|_{L^{\infty}}+B \, b^{\frac{1}{2}}\| u_0 \|_{L^2} \, t. \notag
\end{align}
\end{proof}

By a similar method, we can get an upper bound of $\mathrm{sup}_{x \in \mathbf{R}} u_x(t,x)$.

\begin{lem}
\label{lem66}
Assume that $s \ge3$ and $u_0 \in H^s$.
Also, let $u$ be the corresponding solution of \eqref{eq2}.
Then for $t \in [0,T_0)$,
\begin{align}
\sup_{x \in \mathbf{R}}u_x(t,x) \le
\sup_{x \in \mathbf{R}}u_0'(x) + 2B b  \|u_0\|_{L^{\infty}} t + B b^{\frac{3}{2}}\| u_0 \|_{L^2}t  +   B^2 b^{\frac{3}{2}}\| u_0 \|_{L^2} t^2. 
\end{align}
\end{lem}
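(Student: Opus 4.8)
The plan is to differentiate \eqref{eq2} in $x$, restrict the resulting identity to a characteristic curve, and then estimate the inhomogeneous terms by means of Lemma \ref{lem1} and the $L^2$ conservation law, in close analogy with the proof of Lemma \ref{lem1} itself. Since $s\ge3$ we have $u\in C([0,T_0);H^s)\subset C([0,T_0);C^2)$, so all the pointwise manipulations below are legitimate.

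First I would compute the $x$-derivative of the nonlocal term. Using the Fourier representation
\begin{align*}
\int_{\mathbf{R}} B\mathrm{e}^{-b|x-\xi|}u_\xi(t,\xi)\,d\xi = \mathcal{F}^{-1}\left[\frac{i\,2Bb\,\xi}{b^2+\xi^2}\,\widehat{u}\,\right]
\end{align*}
already recorded in the proof of Proposition \ref{pro1}, the identity $\dfrac{-2Bb\,\xi^2}{b^2+\xi^2}=-2Bb+\dfrac{2Bb^3}{b^2+\xi^2}$, and $\mathcal{F}^{-1}\bigl[(b^2+\xi^2)^{-1}\bigr]=\tfrac{1}{2b}\mathrm{e}^{-b|\cdot|}$, one obtains
\begin{align*}
\partial_x \int_{\mathbf{R}} B\mathrm{e}^{-b|x-\xi|}u_\xi(t,\xi)\,d\xi = -2Bb\,u(t,x) + Bb^2 \int_{\mathbf{R}} \mathrm{e}^{-b|x-\xi|}u(t,\xi)\,d\xi ;
\end{align*}
equivalently, this follows by splitting the convolution at $\xi=x$ and integrating by parts, the local term $-2Bb\,u$ arising from the jump of $\mathrm{sgn}(x-\xi)$. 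Consequently, differentiating \eqref{eq2} gives
\begin{align*}
\partial_t u_x + (u_x)^2 + u\,u_{xx} - 2Bb\,u + Bb^2 \int_{\mathbf{R}} \mathrm{e}^{-b|x-\xi|}u(t,\xi)\,d\xi = 0.
\end{align*}

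Next, setting $V(t)=u_x(t,q(t,x_0))$ with $q$ as in \eqref{eq3}, the terms $\partial_t u_x$, $(u_x)^2$, $u\,u_{xx}$ combine along the flow into $\frac{dV}{dt}+V^2$, so that, discarding $-V^2\le0$,
\begin{align*}
\frac{dV}{dt} \le 2Bb\,u(t,q(t)) - Bb^2 \int_{\mathbf{R}} \mathrm{e}^{-b|q(t)-\xi|}u(t,\xi)\,d\xi.
\end{align*}
By \eqref{eqlem1.3}, $|u(t,q(t))|\le\|u(t)\|_{L^\infty}\le\|u_0\|_{L^\infty}+Bb^{1/2}\|u_0\|_{L^2}t$, while by the Cauchy--Schwarz inequality, the conservation law $\|u(t)\|_{L^2}=\|u_0\|_{L^2}$ and $\|\mathrm{e}^{-b|\cdot|}\|_{L^2}=b^{-1/2}$,
\begin{align*}
\left|Bb^2 \int_{\mathbf{R}} \mathrm{e}^{-b|q(t)-\xi|}u(t,\xi)\,d\xi\right| \le Bb^{3/2}\|u_0\|_{L^2}.
\end{align*}
Hence $\frac{dV}{dt}\le 2Bb\|u_0\|_{L^\infty}+2B^2b^{3/2}\|u_0\|_{L^2}t+Bb^{3/2}\|u_0\|_{L^2}$. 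Finally I would integrate this bound on $[0,t]$, using $V(0)=u_0'(x_0)\le\sup_{x\in\mathbf{R}}u_0'(x)$, to obtain
\begin{align*}
u_x(t,q(t,x_0)) \le \sup_{x\in\mathbf{R}}u_0'(x) + 2Bb\|u_0\|_{L^\infty}t + Bb^{3/2}\|u_0\|_{L^2}t + B^2b^{3/2}\|u_0\|_{L^2}t^2 ,
\end{align*}
and since $q(t,\cdot)$ is a diffeomorphism of $\mathbf{R}$ for every $t\in[0,T_0)$, taking the supremum over $x_0$ on the left recovers $\sup_{x\in\mathbf{R}}u_x(t,x)$, which is the assertion. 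I expect the only point requiring care to be the differentiation of the singular kernel and the appearance of the local term $-2Bb\,u(t,x)$; once that is justified, the rest is a direct transport estimate entirely parallel to the proof of Lemma \ref{lem1}.
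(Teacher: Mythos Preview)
Your proof is correct and follows essentially the same route as the paper: set $V(t)=u_x(t,q(t))$, derive
\[
\frac{dV}{dt}=-V^2+2Bb\,u(t,q(t))-Bb^2\int_{\mathbf{R}}\mathrm{e}^{-b|q(t)-\xi|}u(t,\xi)\,d\xi,
\]
discard $-V^2$, and bound the remaining terms via Lemma~\ref{lem1} and Cauchy--Schwarz. The only cosmetic difference is that you justify the identity for $\partial_x$ of the nonlocal term through the Fourier symbol $\tfrac{-2Bb\xi^2}{b^2+\xi^2}=-2Bb+\tfrac{2Bb^3}{b^2+\xi^2}$, whereas the paper obtains the same formula by integrating $-\int B\mathrm{e}^{-b|q(t)-\xi|}u_{\xi\xi}\,d\xi$ by parts twice on the real side; both are equivalent.
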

\begin{proof}
Set $V(t)=u_x(t,q(t))$.
Using the integration by parts and the inequality \eqref{eqlem1.2},
we obtain
\begin{align}
\begin{aligned}
\label{eqprf5}
\frac{dV}{dt} &= u_{tx}(t,q(t)) \, + \, u(t,q(t)) \cdot u_{xx}(t,q(t)) \\
& = - u_x(t,q(t))^2 \, - \, \int _{\mathbf{R}} B \mathrm{e}^{-b|q(t)-\xi|}u_{\xi \xi}(t,\xi) \, d \xi \\
& = -V^2 + 2B b  u(t,q(t)) - b^2 \int _{\mathbf{R}} B \mathrm{e}^{-b|q(t)-\xi|}u(t,\xi) \, d \xi \\
& \le 2B b  \|u_0\|_{L^{\infty}} \, + \, 2 \, B^2 b^{\frac{3}{2}}\| u_0 \|_{L^2} t + B b^{\frac{3}{2}}\| u_0 \|_{L^2}.
\end{aligned}
\end{align}
The rest of the proof runs as Lemma \ref{lem1}.
\end{proof}

The following theorem provides more precise information on the blow-up of the solution.
\begin{thm}
\label{thm43}
Let $ s \ge 3$ and $ u_0 \in H^s$.
The corresponding solution of \eqref{eq2} blows up in finite time $0 < T_0 < \infty$ if and only if 
\begin{align}
\liminf_{t \uparrow T_0} \inf_{x \in \mathbf{R}}u_x(t,x) = - \infty.
\end{align}
Also, the finite-time blow-up can occur only as a result of wave-breaking.
\end{thm}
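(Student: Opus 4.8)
The plan is to prove the biconditional characterization of finite-time blow-up via $\liminf_{t\uparrow T_0}\inf_x u_x(t,x)=-\infty$, and then deduce the wave-breaking statement. The backward direction is immediate: if $\liminf_{t\uparrow T_0}\inf_x u_x(t,x)=-\infty$ with $T_0<\infty$, then $\|u_x(t)\|_{L^\infty}\to\infty$ along a sequence, so in particular $\sup_{0<t<T_0}\|u_x(t)\|_{L^\infty}$ cannot be finite; combined with Proposition~\ref{pro2} (or rather the estimate behind it) this forces $T_0$ to be the maximal existence time. Actually the cleanest route for ``$\Leftarrow$'' is the contrapositive: if the solution does \emph{not} blow up at a finite $T_0$, meaning $T_0=\infty$, then there is nothing to prove; and if $T_0<\infty$ is the maximal existence time, Proposition~\ref{pro2} gives $\int_0^{T_0}\|u_x(t)\|_{L^\infty}\,dt=\infty$, so $\limsup_{t\uparrow T_0}\|u_x(t)\|_{L^\infty}=\infty$. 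To upgrade this to $\liminf_{t\uparrow T_0}\inf_x u_x(t,x)=-\infty$ I would invoke Lemma~\ref{lem66}: since $\sup_x u_x(t,x)$ is bounded on any finite interval $[0,T_0]$ by the explicit polynomial-in-$t$ bound there, the only way $\|u_x(t)\|_{L^\infty}$ can be unbounded is through $\inf_x u_x(t,x)\to-\infty$. Hence $\int_0^{T_0}\bigl(\inf_x u_x(t,x)\bigr)_-\,dt=\infty$, and a standard ODE argument along the flow $q(t,x_0)$ (writing $V(t)=u_x(t,q(t))$, which satisfies $V'=-V^2+\text{(bounded)}$, so $V$ cannot merely be integrable-to-$-\infty$ without actually diverging) promotes this to $\liminf_{t\uparrow T_0}\inf_x u_x(t,x)=-\infty$.

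For the forward direction, suppose $\liminf_{t\uparrow T_0}\inf_x u_x(t,x)>-\infty$ for every finite $T_0$; I must show the solution is global. Fix any finite $T$; then $\inf_x u_x(t,x)$ is bounded below on $[0,T)$, and Lemma~\ref{lem66} bounds $\sup_x u_x(t,x)$ above on $[0,T]$, so $\sup_{0\le t<T}\|u_x(t)\|_{L^\infty}<\infty$, whence $\int_0^T\|u_x(t)\|_{L^\infty}\,dt<\infty$. By the blow-up criterion (Proposition~\ref{pro2}, contrapositive) the solution extends past $T$; since $T$ was arbitrary, $T_0=\infty$. This also shows that at a genuine finite blow-up time, $\liminf_{t\uparrow T_0}\inf_x u_x(t,x)=-\infty$, completing the equivalence.

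Finally, for the wave-breaking claim: suppose $T_0<\infty$ is the blow-up time. By the equivalence just proved, $\limsup_{t\uparrow T_0}\|u_x(t)\|_{L^\infty}=\infty$. On the other hand, Lemma~\ref{lem1}, inequality~\eqref{eqlem1.3}, gives $\|u(t)\|_{L^\infty}\le\|u_0\|_{L^\infty}+Bb^{1/2}\|u_0\|_{L^2}t$, which stays bounded as $t\uparrow T_0<\infty$. Thus $\sup_{t\in[0,T_0)}\|u(t)\|_{L^\infty}<\infty$ while $\limsup_{t\uparrow T_0}\|u_x(t)\|_{L^\infty}=\infty$, which is exactly the definition of wave-breaking.

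The main obstacle is the step in the backward direction where one must pass from $\int_0^{T_0}\|u_x\|_{L^\infty}=\infty$ (which a priori could hold with $\|u_x(t)\|_{L^\infty}$ merely failing to be integrable) to the pointwise-in-time divergence $\liminf_{t\uparrow T_0}\inf_x u_x(t,x)=-\infty$; here one genuinely needs the Riccati-type structure $V'=-V^2+\mathcal O(1)$ along characteristics from \eqref{eqprf5}, which makes $\inf_x u_x$ a nearly monotone quantity near blow-up, so that ``unbounded in $L^1_t$'' and ``$\liminf=-\infty$'' coincide. The other steps are routine combinations of Lemma~\ref{lem1}, Lemma~\ref{lem66} and Proposition~\ref{pro2}.
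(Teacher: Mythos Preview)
Your proof uses the same ingredients as the paper (Proposition~\ref{pro2}, Lemma~\ref{lem1}, Lemma~\ref{lem66}), and the wave-breaking part matches exactly. However, the ``main obstacle'' you flag is a phantom, and the Riccati argument along characteristics is not needed. Since $T_0<\infty$, the divergence $\int_0^{T_0}\|u_x(t)\|_{L^\infty}\,dt=\infty$ already forces $\limsup_{t\uparrow T_0}\|u_x(t)\|_{L^\infty}=\infty$ (a function bounded on a bounded interval is integrable, and $t\mapsto\|u_x(t)\|_{L^\infty}$ is continuous on $[0,T_0)$ by $u\in C([0,T_0);H^s)$ and Sobolev embedding). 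Lemma~\ref{lem66} then bounds $\sup_x u_x(t,x)$ uniformly on $[0,T_0)$, so along any sequence $t_n\uparrow T_0$ with $\|u_x(t_n)\|_{L^\infty}\to\infty$ one has $\inf_x u_x(t_n,x)\to-\infty$, which \emph{is} the statement $\liminf_{t\uparrow T_0}\inf_x u_x(t,x)=-\infty$. The paper's proof of this direction is exactly these two observations. (A minor presentational point: your direction labels get tangled---the argument beginning ``if $T_0<\infty$ is the maximal existence time, Proposition~\ref{pro2} gives\ldots'' is the forward implication, not the contrapositive of the backward one; and your second paragraph then re-proves the same direction by contrapositive. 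The genuine backward direction is just the Sobolev embedding $\|u_x(t)\|_{L^\infty}\le C\|u(t)\|_{H^s}$, which you do state at the start.)
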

\begin{proof}
Let $0 < T_0<\infty$ be the maximal existence time of the solution.
Then by Proposition \ref{pro2}, we have
\begin{align}
\int_0^{T_0} \|u_x(t)\|_{L^{\infty}} dt =\infty.
\end{align}
Hence, we obtain $ \limsup_{t \uparrow T_0}  \|u_x(t)\|_{L^{\infty}} = \infty$.
On the other hand, by Lemma \ref{lem66}, $ \sup_{x \in \mathbf{R}}u_x(t,x)$ is bounded in finite time.
As a result, $\liminf_{t \uparrow T_0} \inf_{x \in \mathbf{R}}u_x(t,x) = - \infty $ follows.

Conversely, using the Sobolev embedding $ H^s(\mathbf{R})  \hookrightarrow W^{1,\infty}(\mathbf{R})$, 
we obtain 
\begin{align*}
 \|u(t)\|_{W^{1,\infty}} \le C \|u(t)\|_{H^s}.
\end{align*}
Therefore $\liminf_{t \uparrow T_0} \inf_{x \in \mathbf{R}}u_x(t,x) = - \infty$ implies 
the finite-time blow-up.

By Lemma \ref{lem1}, $\sup_{x\in\mathbf{R}} u(t,x)$ is bounded in finite time.
Hence, the second claim follows.
%
\end{proof}

We now intend to employ these results and prove Theorem \ref{thm1}.

\begin{proof}[Proof of Theorem \ref{thm1}]
Let $V(t)=u_x(t,q(t))$.
In the same way as (\ref{eqprf5}), we have 
\begin{align}
\begin{aligned}
\label{eqprf1.1}
\frac{dV}{dt} 
&\le -V^2 + 2B b  u_0(x_0) \, + \, 2 \, B^2 b^{\frac{3}{2}}\| u_0 \|_{L^2} t + B b^{\frac{3}{2}}\| u_0 \|_{L^2} \\
&= -V^2 + F(t,x_0).
\end{aligned}
\end{align}
Since $F(t,x_0)$ is a monotonically increasing function of $t$, we have 
\begin{align}
\frac{dV}{dt} \le -V^2 +F(T,x_0), \quad t \in [0,T]. \notag
\end{align}
By assumption, it follows that
\begin{align}
V(0) = u_0'(x_0) \le - \alpha \left(\frac{F(T,x_0)^{\frac{1}{4}}+ \sqrt{F(T,x_0)^{\frac{ 1}{2}}
+\frac{1+\frac{1}{\alpha}}{1-\frac{1}{2\alpha}} \, \frac{4}{T}}}{2} \right)^2 < - \alpha \, F(T,x_0)^{\frac{1}{2}}. \notag
\end{align}
Hence, we obtain
\begin{align}
\label{eqprf1.2}
V(t)<V(0)<- \alpha F(T,x_0)^{\frac{1}{2}}<0, \quad t \in [0,T].
\end{align} 

Let
\begin{align}
\tilde{V}(t)=V(t)+\sqrt{-V(t)}F(T,x_0)^{\frac{1}{4}}. \notag
\end{align}
Then from (\ref{eqprf1.2}), it follows that
\begin{align}
\tilde{V}(t)=-\sqrt{-V(t)}\left(\sqrt{-V(t)}-F(T,x_0)^{\frac{1}{4}}\right) < \tilde{V}(0)<0. \notag
\end{align}
Since $ V'(t)<0$ and $V(t)< - \alpha F(T,x_0)^{\frac{1}{2}}$, we estimate
\begin{align}	
\begin{aligned}
\label{eqprf1.3}
\tilde{V}'(t) & =-V'(t) \left( \frac{1}{2} \left( \frac{F(T,x_0)^{\frac{1}{2}}}{-V(t)} \right)^{\frac{1}{2}} - 1 \right) \\
& \le (1-\frac{1}{2 \alpha})V'(t) \\
& \le -(1-\frac{1}{2 \alpha}) (V^2 - F(T,x_0)).
\end{aligned}
\end{align}
Also, we have
\begin{align}
\begin{aligned}
\label{eqprf1.4}
\tilde{V}^2(t) &=V^2(t)-V(t)F(T,x_0)^{\frac{1}{2}}+2V(t)(-V(t))^{\frac{1}{2}}F(T,x_0)^{\frac{1}{4}}\\
& \le (1+\frac{1}{\alpha})(V^2-F(T,x_0)).
\end{aligned}
\end{align}
From (\ref{eqprf1.3}) and (\ref{eqprf1.4}), it follows that
\begin{align}
\begin{aligned}
\label{eqprf1.7}
\frac{d}{dt}\left[\frac{1}{\tilde{V}(t)}\right] & =-\frac{1}{\tilde{V}^2} \cdot \tilde{V}' \\
& \ge \frac{1-\frac{1}{2\alpha}}{1+\frac{1}{\alpha}}.
\end{aligned}
\end{align}
Integrating (\ref{eqprf1.7}), we obtain 
\begin{align}
\tilde{V}(t) & \le \frac{1} {  \frac{1}{\tilde{V}(0)} + \frac{1-\frac{1}{2\alpha}}{1+\frac{1}{\alpha}} \, t  } \notag \\
& = \frac{1}{ \frac{1}{u_0'(x_0)+\sqrt{-u_0'(x_0)}F(T,x_0)^{\frac{1}{4}}} + \frac{1-\frac{1}{2\alpha}}{1+\frac{1}{\alpha}} \, t }. \notag
\end{align}
Hence, the solution blows up in finite time if the following inequality holds:
\begin{align}
\label{eqprf1.5}
T^{*}:=-\frac{1+\frac{1}{\alpha}}{1-\frac{1}{2\alpha}} \, \frac{1}{u_0'(x_0)+\sqrt{-u_0'(x_0)}F(T,x_0)^{\frac{1}{4}}}\le T.
\end{align}
	
We can rewrite (\ref{eqprf1.5}) as 
\begin{align}
\label{eqprf1.6}
-u_0'(x_0)-F(T,x_0)^{\frac{1}{4}}\sqrt{-u_0'(x_0)}-\frac{1+\frac{1}{\alpha}}{1-\frac{1}{2\alpha}} \, \frac{1}{T} \ge 0.
\end{align}
Since $ \sqrt{-u_0'(x_0)} \ge 0$, (\ref{eqprf1.6}) is equivalent to 
\begin{align}
\sqrt{-u_0'(x_0)} \ge 
\frac{F(T,x_0)^{\frac{1}{4}}+ \sqrt{F(T,x_0)^{\frac{ 1}{2}}+\frac{1+\frac{1}{\alpha}}{1-\frac{1}{2\alpha}}\, \frac{4}{T} }}{2}, \notag
\end{align}
which follows by the assumption (\ref{eqthm1.1}).
Thus $ T^* \le T$, and the proof is complete.

%
\end{proof}

\begin{cor}
\label{cor1}
Assume that $s \ge 3$ and $u_0 \in H^s$. Let $B>1$ and $0<b<1$.
Then there exists $A(u_0)>0$ such that 
if $b\le A(u_0) B^{- \frac{4}{3}}$, the corresponding solution of \eqref{eq2}
blows up in finite time.
\end{cor}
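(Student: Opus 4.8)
The plan is to derive this from Theorem~\ref{thm1}, applied at a single point with $\alpha=1$ and a time $T$ chosen as a function of $(B,b)$. The decisive observation is that the hypothesis $b\le A(u_0)B^{-\frac43}$, together with $B>1$, makes every coefficient in $F$ that multiplies an $L^2$-mass small in terms of $A$ alone: one has $Bb\le AB^{-\frac13}\le A$, $Bb^{\frac32}\le A^{\frac32}B^{-1}\le A^{\frac32}$, and $B^2b^{\frac32}\le A^{\frac32}$.

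I would fix a point $x_0$ with $\delta:=-u_0'(x_0)>0$, which exists as long as $u_0\not\equiv0$ since $u_0\in H^s\hookrightarrow C^1$ vanishes at $\pm\infty$ and a globally nonnegative derivative would force $u_0\equiv0$. With the choice
\begin{align*}
T=\frac{\sqrt{A}}{B^2b^{\frac32}}
\end{align*}
one computes $F(T,x_0)=2Bb\,u_0(x_0)+Bb^{\frac32}\|u_0\|_{L^2}+2\sqrt{A}\,\|u_0\|_{L^2}$, and the three bounds above give, provided $A\le1$ and $\sqrt{A}\,|u_0(x_0)|\le\|u_0\|_{L^2}$,
\begin{align*}
0\le F(T,x_0)\le\sqrt{A}\,\bigl(2|u_0(x_0)|+3\|u_0\|_{L^2}\bigr)=:\sqrt{A}\,K(u_0),\qquad\frac{16}{T}=\frac{16B^2b^{\frac32}}{\sqrt{A}}\le16A,
\end{align*}
where $\frac{16}{T}$ is the pertinent quantity since $\frac{1+\frac1\alpha}{1-\frac1{2\alpha}}\,\frac4T=\frac{16}{T}$ for $\alpha=1$.

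It remains to verify \eqref{eqthm1.1}. For $\alpha=1$ that inequality is implied by $\sqrt{F(T,x_0)}+\frac{16}{T}\le\delta$, because this estimate forces $F(T,x_0)^{\frac14}\le\sqrt{\delta}$ and $\sqrt{F(T,x_0)^{\frac12}+\frac{16}{T}}\le\sqrt{\delta}$, so the bracketed term in \eqref{eqthm1.1} is at most $\sqrt{\delta}=\sqrt{-u_0'(x_0)}$, which is precisely \eqref{eqthm1.1} (as noted in the proof of Theorem~\ref{thm1}). By the displayed bounds it therefore suffices that $K(u_0)^{\frac12}A^{\frac14}+16A\le\delta$, together with $A\le1$ and $\sqrt{A}\,|u_0(x_0)|\le\|u_0\|_{L^2}$; all three hold once $A$ is smaller than a suitable $A(u_0)>0$, since $K(u_0)$ and $\delta$ depend only on $u_0$. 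Then for every $(B,b)$ with $B>1$, $0<b<1$ and $b\le A(u_0)B^{-\frac43}$, Theorem~\ref{thm1} yields finite-time blow-up.

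The one subtle point is that $2Bb\,u_0(x_0)$ may be negative, so $F(T,x_0)\ge0$ is not automatic; this is exactly why $T$ must be allowed to grow like $(B^2b^{\frac32})^{-1}$ (whence $T\ge A^{-1}$), letting the positive term $2B^2b^{\frac32}\|u_0\|_{L^2}T$ absorb the possibly negative one, while the extra factor $\sqrt{A}$ in the choice of $T$ simultaneously keeps $F(T,x_0)\le\sqrt{A}\,K(u_0)$ and $\frac{16}{T}\le16A$ small. The remaining verifications are routine.
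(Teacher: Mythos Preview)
Your proof is correct and, like the paper, reduces the corollary to Theorem~\ref{thm1} with $\alpha=1$ and a single well-chosen time $T$. The paper's argument is slightly more direct: it sets $T=B^{-2/3}b^{-1/2}$, so that both $F(T,x_0)^{1/2}$ and $16/T$ are controlled by $C(u_0)\,B^{2/3}b^{1/2}$ (using $B>1$, $0<b<1$), and then the requirement $-\inf_x u_0'(x)\ge C(u_0)\,B^{2/3}b^{1/2}$ rearranges immediately into $b\le A(u_0)B^{-4/3}$ with an explicit constant $A(u_0)=\bigl(\inf_x u_0'(x)/C(u_0)\bigr)^2$. Your choice $T=\sqrt{A}/(B^2b^{3/2})$ works backward from the hypothesis $b\le AB^{-4/3}$ and produces an implicit $A(u_0)$, but has the merit of explicitly securing the hypothesis $F(T,x_0)\ge0$ of Theorem~\ref{thm1}: your larger $T$ forces the positive term $2B^2b^{3/2}\|u_0\|_{L^2}T$ to dominate any negative contribution from $2Bb\,u_0(x_0)$. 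The paper's proof passes over this verification (and with its smaller $T$ the inequality $F(T,x_0)\ge0$ is not automatic at an arbitrary minimizer of $u_0'$), so your treatment is in fact more careful on this point.
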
 
\begin{proof}
Setting $T=\frac{1}{B^{\frac{2}{3}}b^{\frac{1}{2}}}$, we obtain 
\begin{align*}
& \left( \frac{F(T,x_0)^{\frac{1}{4}} + \sqrt{F(T,x_0)^{\frac{ 1}{2}} +  \, \frac{16}{T}  }}{2} \right)^2 \\
& \qquad  \qquad  \le C  (\|u_0\|_{L^{\infty}}^{\frac{1}{2}}+\|u_0\|_{L^2}^{\frac{1}{2}}+1) 
(B^{\frac{1}{2}} b^{\frac{1}{2}} + B^{\frac{1}{2}} b^{\frac{3}{4}} + B^{\frac{2}{3}} b^{\frac{1}{2}}) \\
& \qquad  \qquad  \le C  (\|u_0\|_{L^{\infty}}^{\frac{1}{2}}+\|u_0\|_{L^2}^{\frac{1}{2}}+1) B^{\frac{2}{3}} b^{\frac{1}{2}}
\end{align*}
for any $x_0 \in \mathbf{R}$.
Therefore we can take 
\begin{align*}
A(u_0)=\left(\frac{\inf_{x \in \mathbf{R}} u_0'(x)}{C (\|u_0\|_{L^{\infty}}^{\frac{1}{2}}+\|u_0\|_{L^2}^{\frac{1}{2}}+1)}\right)^2.
\end{align*}
\end{proof}

Next, we show the second blow-up result.

\begin{thm}
\label{88}
Assume that $s \ge 3$ and $u_0 \in H^s$. Also, let $B>0$ and $b>0$.
If there exists $ x_0 \in \mathbf{R} $ such that 
\begin{align}
2u_0(x_0)+b^{\frac{1}{2}}\|u_0\|_{L^2}<0, \notag
\end{align}
and 
\begin{align}
u_0'(x_0) \le \frac{2Bb^{\frac{1}{2}}\|u_0\|_{L^2}}{2u_0(x_0)+b^{\frac{1}{2}}\|u_0\|_{L^2}}, \notag
\end{align}
then the corresponding solution of \eqref{eq2} blows up in finite time.
Moreover, for the blow-up time $T_0$, 
\begin{align*}
T_0 \le -\frac{1}{u_0'(x_0)}. 
\end{align*}
\end{thm}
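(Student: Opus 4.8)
The plan is to run the same Riccati-type scheme along the characteristic $q(t)$ as in the proof of Theorem~\ref{thm1}, but this time using the two hypotheses to make the inhomogeneous term \emph{vanish} (rather than merely be dominated) on a time interval long enough to contain the predicted blow-up time. Put $V(t)=u_x(t,q(t))$. Exactly as in \eqref{eqprf5},
\begin{align*}
\frac{dV}{dt}=-V^2+2Bb\,u(t,q(t))-b^2\int_{\mathbf{R}}B\mathrm{e}^{-b|q(t)-\xi|}u(t,\xi)\,d\xi ,
\end{align*}
and by Cauchy--Schwarz together with the $L^2$ conservation law $\|u(t)\|_{L^2}=\|u_0\|_{L^2}$ one has $\bigl|b^2\int_{\mathbf{R}}B\mathrm{e}^{-b|q(t)-\xi|}u(t,\xi)\,d\xi\bigr|\le Bb^{\frac{3}{2}}\|u_0\|_{L^2}$, so that
\begin{align*}
\frac{dV}{dt}\le -V^2+2Bb\,u(t,q(t))+Bb^{\frac{3}{2}}\|u_0\|_{L^2}.
\end{align*}

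First I would control $u(t,q(t))$ from above by Lemma~\ref{lem1}: $u(t,q(t))\le u_0(x_0)+Bb^{\frac{1}{2}}\|u_0\|_{L^2}\,t$ for $t\in[0,T_0)$. Denote by $c$ the right-hand side of the second hypothesis; by the first hypothesis its numerator is positive and its denominator negative, so $c<0$ and $-1/c>0$. An elementary computation shows that at $t=-1/c$ the bound from Lemma~\ref{lem1} equals $-\tfrac{1}{2}b^{\frac{1}{2}}\|u_0\|_{L^2}$, and since that bound is increasing in $t$ we get $u(t,q(t))\le-\tfrac{1}{2}b^{\frac{1}{2}}\|u_0\|_{L^2}$ for all $t\in[0,-1/c]\cap[0,T_0)$. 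Inserting this into the differential inequality, the two inhomogeneous contributions cancel because $Bb^{\frac{3}{2}}\|u_0\|_{L^2}=Bb\cdot b^{\frac{1}{2}}\|u_0\|_{L^2}$, leaving
\begin{align*}
\frac{dV}{dt}\le-V^2\qquad\text{on }[0,-1/c]\cap[0,T_0).
\end{align*}

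Since $V(0)=u_0'(x_0)\le c<0$, integrating $\tfrac{d}{dt}(-1/V)\le-1$ (legitimate, as $V$ stays negative, being decreasing) gives $-1/V(t)\le-1/u_0'(x_0)-t$, so $V(t)\to-\infty$ as $t\uparrow-1/u_0'(x_0)$. The one thing that needs checking, and the place where the precise form of the second hypothesis enters, is that this blow-up time lies inside the interval on which $dV/dt\le-V^2$ was established, i.e.\ $-1/u_0'(x_0)\le-1/c$; this follows from $u_0'(x_0)\le c<0$ and the monotonicity of $x\mapsto-1/x$ on $(-\infty,0)$. Consequently, if $T_0>-1/u_0'(x_0)$ the estimate holds on the whole compact interval $[0,-1/u_0'(x_0)]\subset[0,T_0)$, forcing $\|u_x(t)\|_{L^\infty}$ to be unbounded there, which contradicts $u\in C([0,T_0);H^s)\hookrightarrow C([0,T_0);W^{1,\infty})$. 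Hence $T_0\le-1/u_0'(x_0)$, as claimed. I do not expect any genuine obstacle beyond the bookkeeping identifying $-1/c$ as the time at which the Lemma~\ref{lem1} bound reaches $-\tfrac{1}{2}b^{\frac{1}{2}}\|u_0\|_{L^2}$; the heart of the argument is the algebraic cancellation, which is exactly what the two hypotheses are designed to produce.
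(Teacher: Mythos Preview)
Your argument is correct and is essentially the same as the paper's: your interval endpoint $-1/c$ is precisely the paper's $T^*=-\dfrac{2u_0(x_0)+b^{1/2}\|u_0\|_{L^2}}{2Bb^{1/2}\|u_0\|_{L^2}}$, and your cancellation $2Bb\cdot(-\tfrac12 b^{1/2}\|u_0\|_{L^2})+Bb^{3/2}\|u_0\|_{L^2}=0$ is exactly the paper's observation that $F(T^*,x_0)\le 0$ (in fact $=0$). The only cosmetic difference is that the paper packages the estimate via the function $F(t,x_0)$ from Theorem~\ref{thm1}, whereas you unpack Lemma~\ref{lem1} explicitly; the Riccati integration and the verification $-1/u_0'(x_0)\le T^*$ are identical.
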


\begin{rem}
Let $\lambda \ge 1$ and $u_0(x)=- x  \mathrm{e}^{-\lambda x^2}$.  
Then we have 
\begin{align*}
\|u_0\|_{L^2}=2^{-\frac{5}{4}}\pi^{\frac{1}{4}}\lambda^{-\frac{3}{4}}, \ \ 
u_0 \left(\frac{1}{2 \sqrt{\lambda}} \right)=- \frac{1}{2} \lambda^{-\frac{1}{2}} \mathrm{e}^{-\frac{1}{4}}  \ \
\mathrm{and} \ \ u_0'\left(\frac{1}{2 \sqrt{\lambda}}\right) = -\frac{1}{2} \mathrm{e}^{-\frac{1}{4}}.
\end{align*}
Set $ x_0= \frac{1}{2 \sqrt{\lambda}} $.
If $ \lambda \ge 2^{-\frac{1}{3}} \pi^{\frac{1}{3}} b^{\frac{2}{3}}$, we have
\begin{align*}
2u_0(x_0)+b^{\frac{1}{2}}\|u_0\|_{L^2}\le - \frac{1}{2} \mathrm{e}^{-\frac{1}{4}} \lambda^{-\frac{1}{2}} <0,
\end{align*}
and hence obtain 
\begin{align*}
-\frac{2Bb^{\frac{1}{2}}\|u_0\|_{L^2}}{2u_0(x_0)+b^{\frac{1}{2}}\|u_0\|_{L^2}} \le C Bb^{\frac{1}{2}} \lambda^{- \frac{1}{4}},
\end{align*}
where $C$ is independent of $\lambda$, $B$ and $b$.
Therefore if $ \lambda \ge C (b^{\frac{3}{2}} + B^4 b^2 +1) $,
the requirments are saitsfied, which implies that the corresponding solution of \eqref{eq2}
blows up in finite time.
\end{rem}

\begin{rem}
Fix $u_0 \in H^3$ and $ x_0 \in \mathbf{R}$ such that
\begin{align}
u_0'(x_0) = \inf_{x \in \mathbf{R}} u_0'(x) \quad \mathrm{and}  \quad u_0(x_0)<0. \notag
\end{align}
Also, for $n \in \mathbf{N}$, set 
\begin{align}
u_0^{n}(x)=n^{-\frac{1}{2}}u_0(nx) \quad \mathrm{and} \quad x_n=n^{-1}x_0. \notag
\end{align}
As we will see in the proof of Theorem \ref{nthm2}, 
$ u_0^{n} $ and $x_n$ satisfy the requirements when $n$ is sufficiently large.
Then the corresponding solution blows up in finite time, and $T_0 \le -\frac{1}{\inf_{x \in \mathbf{R}} \partial_x u_0^n(x)}$ holds.
In other words, the corresponding solution of \eqref{eq2} blows up
earlier than or at the same time as the one of \eqref{eq7}.
\end{rem}

\begin{proof}[Proof of Theorem \ref{88}]
Set $T^*= -\frac{2u_0(x_0)+b^{\frac{1}{2}}\|u_0\|_{L^2}}{2Bb^{\frac{1}{2}}\|u_0\|_{L^2}}$.
As in the proof of Theorem \ref{thm1}, we obtain
\begin{align}
\frac{dV}{dt} \le -V^2+F(T,x_0), \quad t \in [0,T]. \notag
\end{align}
Since $F(T^*,x_0) \le 0$ 
by assumption, 
we get 
\begin{align}
\frac{dV}{dt} \le -V^2, \quad t \in [0,T^*]. \notag
\end{align}
This gives
\begin{align*}
T_0 \le -\frac{1}{u_0'(x_0)}. 
\end{align*}
We can justify this conclusion because $ -\frac{1}{u_0'(x_0)} \le T^*$ by assumption.
\end{proof}

The following theorem gives detailed information about the blow-up rate.
\begin{thm}
\label{794}
Assume that $s \ge 3$ and $u_0 \in H^s$. 
Also, let $ 0<T_0<\infty $ be the blow-up time of the corresponding solution $u$ to \eqref{eq2}. Then
\begin{align}
\lim_{t \uparrow T_0} \left(\inf_{x \in \mathbf{R}} \{u_x(t,x)\}(T_0-t) \right) = -1. \notag
\end{align}
\end{thm}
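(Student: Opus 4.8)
The plan is to control $m(t):=\inf_{x\in\mathbf{R}}u_x(t,x)$ and show that it obeys, up to a bounded perturbation, the Riccati equation $m'=-m^2$, from which the precise rate $-1$ follows by an elementary comparison. First I would set up the evolution of $m$. Since $s\ge 3$, Proposition \ref{pro1} gives $u\in C([0,T_0);H^3)\cap C^1([0,T_0);H^2)$, so $u_x(t,\cdot)\in H^2\hookrightarrow C^1_b$ decays at $\pm\infty$; near $T_0$ one has $m(t)<0$ by Theorem \ref{thm43}, hence the infimum is attained at some $\xi(t)\in\mathbf{R}$ with $u_{xx}(t,\xi(t))=0$. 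I would invoke the standard lemma on the a.e.\ differentiation of a pointwise infimum (cf.\ \cite{constantin2000existence}): since $t\mapsto u_x(t,\cdot)$ is locally Lipschitz into $C_b$, the map $m$ is locally Lipschitz on $[0,T_0)$ and $m'(t)=u_{tx}(t,\xi(t))$ for a.e.\ $t$. Differentiating \eqref{eq2} in $x$ and using
\begin{align}
\partial_x\!\int_{\mathbf{R}}B\mathrm{e}^{-b|x-\xi|}u_\xi(t,\xi)\,d\xi=b^2\!\int_{\mathbf{R}}B\mathrm{e}^{-b|x-\xi|}u(t,\xi)\,d\xi-2Bb\,u(t,x), \notag
\end{align}
exactly as in the derivation of \eqref{eqprf5}, and evaluating at $x=\xi(t)$, I obtain
\begin{align}
m'(t)=-m(t)^2+2Bb\,u(t,\xi(t))-b^2\!\int_{\mathbf{R}}B\mathrm{e}^{-b|\xi(t)-\xi|}u(t,\xi)\,d\xi\qquad\text{a.e.} \notag
\end{align}
By \eqref{eqlem1.3} and the $L^2$ conservation law, the last two terms are bounded on $[0,T_0)$, so there is a constant $K_0>0$ (depending only on $u_0$, $B$, $b$, $T_0$) with $|m'(t)+m(t)^2|\le K_0$ for a.e.\ $t\in[0,T_0)$.

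Next I would show $m(t)\to-\infty$ monotonically. By Theorem \ref{thm43}, $\liminf_{t\uparrow T_0}m(t)=-\infty$, so $m(t_1)<-\sqrt{K_0}$ for some $t_1<T_0$. Since $m'\le -m^2+K_0<0$ whenever $m<-\sqrt{K_0}$, a continuity (barrier) argument gives $m(t)<-\sqrt{K_0}$ with $m$ strictly decreasing on $[t_1,T_0)$; combined with $\liminf m=-\infty$ this forces $\lim_{t\uparrow T_0}m(t)=-\infty$. Then for the rate, fix $\varepsilon\in(0,1)$ and pick $t_\varepsilon\in[t_1,T_0)$ so that $K_0<\varepsilon\,m(t)^2$ for $t\ge t_\varepsilon$; hence $-(1+\varepsilon)m^2\le m'\le-(1-\varepsilon)m^2$ there, and since $\tfrac{d}{dt}(1/m)=-m'/m^2$,
\begin{align}
1-\varepsilon\le\frac{d}{dt}\!\left(\frac{1}{m(t)}\right)\le 1+\varepsilon\qquad\text{a.e. on }[t_\varepsilon,T_0). \notag
\end{align}
As $1/m$ is locally Lipschitz on $[t_1,T_0)$ and $1/m(t)\to0^-$ as $t\uparrow T_0$, integrating over $[t,\tau]$ and letting $\tau\uparrow T_0$ gives $(1-\varepsilon)(T_0-t)\le-1/m(t)\le(1+\varepsilon)(T_0-t)$, i.e.
\begin{align}
-\frac{1}{1-\varepsilon}\le m(t)(T_0-t)\le-\frac{1}{1+\varepsilon},\qquad t\in[t_\varepsilon,T_0). \notag
\end{align}
Letting $\varepsilon\to0$ yields $\lim_{t\uparrow T_0}m(t)(T_0-t)=-1$, which is the assertion.

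The main obstacle is the first step, specifically the a.e.\ identity $m'(t)=u_{tx}(t,\xi(t))$: one must justify that $m$ is locally Lipschitz (which follows from $u\in C^1([0,T_0);H^2)$, the embedding $H^2\hookrightarrow C_b$, and the fact that $v\mapsto\inf v$ is Lipschitz on $C_b$) and that its derivative at a.e.\ time is recovered from any minimizing point $\xi(t)$. This is exactly the Constantin–Escher-type lemma, and it is the only non-elementary ingredient; everything else reduces to the Riccati comparison above, whose perturbation term is controlled entirely by Lemma \ref{lem1}.
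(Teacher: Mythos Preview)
Your proposal is correct and follows essentially the same approach as the paper: both invoke the Constantin--Escher lemma to justify a.e.\ differentiability of $m(t)$, derive the perturbed Riccati inequality $|m'+m^2|\le K$ with the same constant (coming from Lemma~\ref{lem1} and the $L^2$ conservation), trap $m$ below a negative threshold, and integrate $\frac{d}{dt}(1/m)$ to extract the rate. The only cosmetic difference is that you first upgrade $\liminf m=-\infty$ to $\lim m=-\infty$ via monotonicity, whereas the paper integrates directly using the $\liminf$; the arguments are otherwise identical.
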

\begin{proof}
Set $ m(t)=\inf_{x \in \mathbf{R}} u_x(t,x)$ and 
\begin{align*}
K=2Bb\|u_0\|_{L^{\infty}}+B b^{\frac{3}{2}}\|u_0\|_{L^2} +2B^2b^{\frac{3}{2}}\|u_0\|_{L^2}T_0.
\end{align*}
Since $ u \in C^1([0,T_0);H^2)$, $ m(t)$ is almost everywhere differentiable on $[0,T_0)$ 
(see \cite[Theorem 2.1]{constantin1998wave}).
Therefore, as in the proof of Theorem \ref{thm1}, we have
\begin{align}
\label{nthm1eq1}
-m^2-K \le \frac{dm}{dt} \le -m^2+ K.
\end{align}
By Theorem \ref{thm43}, we also have
\begin{align}
\label{nthm1eq4}
\liminf_{t \uparrow T_0}m(t) = - \infty.
\end{align}
Hence we see that for $\epsilon \in (0,1)$, there exists $t_0 \in [0,T_0)$ satisfying
\begin{align}
\label{neweq}
m(t_0)<- \sqrt{K+\frac{K}{\epsilon}} < - \sqrt{\frac{K}{\epsilon}}. 
\end{align}
From (\ref{nthm1eq1}) and \eqref{neweq} it follows that
\begin{align}
\label{nthm1eq2}
m(t)<- \sqrt{K+\frac{K}{\epsilon}} < - \sqrt{\frac{K}{\epsilon}}, \quad t \in [t_0,T_0).
\end{align}
Combining (\ref{nthm1eq1}) with (\ref{nthm1eq2}), we obtain
\begin{align}
\label{nthm1eq3}
1-\epsilon \le \frac{d}{dt}\left[\frac{1}{m(t)}\right] \le 1+\epsilon.
\end{align}
By integrating (\ref{nthm1eq3}) and using (\ref{nthm1eq4}), we have
\begin{align}
(1-\epsilon)(T_0-t) \le -\frac{1}{m(t)} \le (1+\epsilon)(T_0-t), \notag
\end{align}
which implies
\begin{align}
-\frac{1}{1-\epsilon}\le m(t)(T_0-t) \le -\frac{1}{1+\epsilon}. \notag
\end{align}
Hence, 
\begin{align}
-\frac{1}{1-\epsilon} \le \liminf_{t \rightarrow T_0}m(t)(T_0-t) \le \limsup_{t \rightarrow T_0} m(t)(T_0-t)\le -\frac{1}{1+\epsilon}. \notag
\end{align}
Letting $ \epsilon \rightarrow 0 $, we complete the proof.
\end{proof}

\begin{rem}
This proof is similar in spirit to that of \cite[Theorem 5.3]{constantin2000existence}, where
Constantin considerd the Camassa-Holm equation 
\begin{align}
\label{5666}
\left \{
\begin{aligned}
& \partial_t u - \partial^3_{txx}u + 3 u \partial_xu = 2 \partial_x u \partial^2_{xx}u + u \partial^3_{xxx}u, 
\quad (t,x) \in \mathbf{R}_+ \times \mathbf{R},  \\
& u(0,x)=u_0(x),
\end{aligned}
\right.
\end{align}
and obtained the generic bound of blow-up rate.
Theorem \ref{794} enabels us to obtain the same blow-up rate 
as in \cite{constantin2000existence}.
\end{rem}

\section{Lifespan and Proof of Theorem \ref{thm10}}
\label{sec4}
In this section, we first give a lower bound of the maximal existence time.
Combining this result with Theorem \ref{thm1}, we prove Theorem \ref{thm10}.
Then we show that the solution of \eqref{eq2} converges to
the one of \eqref{eq7} as $ B \rightarrow 0$ or $b \rightarrow 0$.
\begin{thm}
\label{nthm2}
Asuume that $ u_0 \in H^s$, $s\ge3$.
Let $ T_{max} $ be the maximal existence time of the corresponding solution $u$ of \eqref{eq2},
$ m(t)= \inf_{x \in \mathbf{R}} u_x(t,x)$ and 
$ \Phi(t)= 2Bb\|u_0\|_{L^{\infty}}+B b^{\frac{3}{2}}\|u_0\|_{L^2}+2B^2b^{\frac{3}{2}}\|u_0\|_{L^2}t.$
Then
\begin{align}
\label{15}
T_{max} \ge T_{u_0}= \sup_{T>0} \min \left\{T, \, \Phi(T)^{-\frac{1}{2}}\arctan \left(-\frac{\Phi(T)^{\frac{1}{2}}}{m(0)} \right)\right\}.
\end{align}
Moreover, for any $ \epsilon>0 $, there exists a $u_0 \in H^3$ such that 
$ T_{max}(u_0) < (1+\epsilon)T_{u_0}.$
\end{thm}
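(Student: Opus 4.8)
\emph{Proof plan.} The statement has two parts --- the lower bound $T_{max}\ge T_{u_0}$ and the sharpness claim --- and I would treat them separately.

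For the lower bound I would re-run the ODE analysis of $m(t):=\inf_{x\in\mathbf R}u_x(t,x)$ used in the proof of Theorem \ref{794}, but now keeping the time-dependent majorant $\Phi(t)$ instead of freezing it at $\Phi(T_0)$ (which was legitimate there only because $T_0<\infty$ was known). Using that $m$ is a.e.\ differentiable (cf.\ \cite[Theorem 2.1]{constantin1998wave}) and that evaluating \eqref{eqprf5} along a curve tracking an (approximate) minimiser of $u_x(t,\cdot)$ annihilates the $u_{xx}$ term, Lemma \ref{lem1} and the $L^2$-conservation law give $\tfrac{dm}{dt}\ge-m^2-\Phi(t)$ for a.e.\ $t\in[0,T_{max})$. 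Fixing $T>0$, on $[0,\min\{T,T_{max}\})$ one has $\Phi(t)\le\Phi(T)$, hence $\tfrac{dm}{dt}\ge-m^2-\Phi(T)$; comparing $m$ with the solution $z(t)=\Phi(T)^{1/2}\tan(\arctan(m(0)\Phi(T)^{-1/2})-\Phi(T)^{1/2}t)$ of $z'=-z^2-\Phi(T)$, $z(0)=m(0)$, the comparison principle yields $m\ge z$, and $z$ stays finite until $t^{*}=\Phi(T)^{-1/2}(\arctan(m(0)\Phi(T)^{-1/2})+\tfrac\pi2)=\Phi(T)^{-1/2}\arctan(-\Phi(T)^{1/2}/m(0))$, the last equality being $\arctan y+\tfrac\pi2=\arctan(-1/y)$ for $y<0$. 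So $m$ cannot blow up before $\min\{T,t^{*}\}$, and Theorem \ref{thm43} gives $T_{max}\ge\min\{T,\Phi(T)^{-1/2}\arctan(-\Phi(T)^{1/2}/m(0))\}$; a supremum over $T>0$ then yields $T_{max}\ge T_{u_0}$. (When $\inf u_0'\ge0$ the right-hand side of \eqref{15} is $\le0$ and there is nothing to prove.)

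For the sharpness I would argue by scaling. Fix $u_0\in H^3$ whose minimal slope $m_0:=\inf_x u_0'(x)$ is attained at a point $x_0$ with $u_0(x_0)<0$ --- e.g.\ $u_0(x)=-e^{-x^2}$, $x_0=-2^{-1/2}$ --- and set $u_0^{n}(x)=n^{-1/2}u_0(nx)$, $x_n=n^{-1}x_0$. A direct computation gives $\|u_0^n\|_{L^2}=n^{-1}\|u_0\|_{L^2}$, $\|u_0^n\|_{L^\infty}=n^{-1/2}\|u_0\|_{L^\infty}$, $u_0^n(x_n)=n^{-1/2}u_0(x_0)$ and $\inf_x(u_0^n)'(x)=(u_0^n)'(x_n)=n^{1/2}m_0$. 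Then $2u_0^n(x_n)+b^{1/2}\|u_0^n\|_{L^2}=n^{-1/2}(2u_0(x_0)+b^{1/2}n^{-1/2}\|u_0\|_{L^2})<0$ for $n$ large, the right-hand side of the second hypothesis of Theorem \ref{88} is $O(n^{-1/2})$, and $(u_0^n)'(x_n)=n^{1/2}m_0\to-\infty$; so both hypotheses of Theorem \ref{88} hold for $n$ large, and it gives that $u^n$ blows up with $T_{max}(u_0^n)\le-1/(u_0^n)'(x_n)=-n^{-1/2}/m_0$.

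It then remains to show that $T_{u_0^n}$ matches this to leading order, which I expect to be the delicate step. Writing $\Phi_n$ for the function $\Phi$ of the statement with $u_0$ replaced by $u_0^n$, one has $\Phi_n(T)=O(n^{-1/2})$ for $T=O(n^{-1/2})$. Given $\delta\in(0,1)$, I would test the supremum in \eqref{15} at $T=-(1-\delta)n^{-1/2}/m_0$: the arctangent argument $y_n:=-\Phi_n(T)^{1/2}/(n^{1/2}m_0)$ is $O(n^{-3/4})$, so $\arctan y_n\ge y_n(1-y_n^2/3)$ gives
\[
\Phi_n(T)^{-1/2}\arctan\!\Big(\!-\frac{\Phi_n(T)^{1/2}}{n^{1/2}m_0}\Big)\ \ge\ -\frac{n^{-1/2}}{m_0}\Big(1-\frac{y_n^2}{3}\Big)\ \ge\ -(1-\delta)\frac{n^{-1/2}}{m_0}\ =\ T
\]
for $n$ large, whence $T_{u_0^n}\ge-(1-\delta)n^{-1/2}/m_0$. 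Combining with the bound on $T_{max}(u_0^n)$ gives $T_{max}(u_0^n)/T_{u_0^n}\le(1-\delta)^{-1}$, and choosing $\delta$ with $(1-\delta)^{-1}<1+\epsilon$ and then $n$ large completes the proof. The main obstacle is precisely this matching: one must choose the scaling so that $x_0$ is genuinely the \emph{minimiser} of $u_0'$ (not merely a point of negative slope), so that Theorem \ref{88} applied at $x_n$ produces a bound governed by the very quantity $\inf_x(u_0^n)'(x)=n^{1/2}m_0$ that also governs $T_{u_0^n}$; and then the arctangent must be estimated on the correct side, with all error terms controlled uniformly on the $O(n^{-1/2})$ time scale on which both quantities vanish.
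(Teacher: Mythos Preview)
Your proposal is correct and follows essentially the same route as the paper: for the lower bound, both arguments freeze $\Phi$ at a fixed $T$ and compare $m(t)$ with the explicit solution of $z'=-z^2-\Phi(T)$; for sharpness, both use the scaling $u_0^n(x)=n^{-1/2}u_0(nx)$ together with Theorem \ref{88} at $x_n=n^{-1}x_0$. The only minor difference is the test value in the supremum: the paper evaluates at $T=1$ and reads off $T_{u_0^n}/T_{max}(u_0^n)\ge \arctan(y_n)/y_n\to 1$ directly, whereas you test at $T=-(1-\delta)n^{-1/2}/m_0$ and use the Taylor bound $\arctan y\ge y(1-y^2/3)$ --- both reach the same conclusion.
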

\begin{proof}
To obtain a contradiction, suppose that $ T_{max} < T_{u_0} $.
Then there exists a $T>0$ such that $T_{max}<\min \left\{T, \, \Phi(T)^{-\frac{1}{2}}\arctan \left(-\frac{\Phi(T)^{\frac{1}{2}}}{m(0)} \right)\right\}$.
As in the proof of Theorem \ref{thm1}, we have
\begin{align}
\frac{dm}{dt} \ge -m^2- \Phi(T), \quad t \in [0,T_{max}). \notag
\end{align}
From this, it follows that 
\begin{align}
\label{nthm2eq1}
\frac{1}{m^2+\Phi(T)}\frac{dm}{dt} \ge -1.
\end{align}
Integrating (\ref{nthm2eq1}) on $[0,t]$, we obtain 
\begin{align}
\arctan(\Phi(T)^{-\frac{1}{2}}m(t)) \ge \arctan(\Phi(T)^{-\frac{1}{2}}m(0))-\Phi(T)^{\frac{1}{2}}t.  \notag 
\end{align}
Hence, we get
\begin{align}
\label{nthm2eq2}
-m(t) \le \Phi(T)^{\frac{1}{2}} \frac{\tan(\Phi(T)^{\frac{1}{2}}t)- 
\Phi(T)^{-\frac{1}{2}}m(0)}{1+\Phi(T)^{-\frac{1}{2}}m(0)\tan(\Phi(T)^{\frac{1}{2}}t)}, \quad  t \in [0,T_{max}).
\end{align}
Since $ T_{max} < \Phi(T)^{-\frac{1}{2}}\arctan \left(-\frac{\Phi(T)^{\frac{1}{2}}}{m(0)}\right)$, 
$ m(t)$ is bounded on $[0,T_{max})$.
This contradicts Theorem \ref{thm43}.


To show the rest of the theorem, choose initial data $u_0 \in H^3$ and $ x_0 \in \mathbf{R}$ such that
\begin{align}
m(0)=\inf_{x \in \mathbf{R}} u_0'(x) = u_0'(x_0), \quad u_0(x_0)<0. \notag
\end{align}
In addition, for $n \in \mathbf{N}$, set 
\begin{align}
u_0^{n}(x)=n^{-\frac{1}{2}}u_0(nx), \quad x_n=n^{-1}x_0. \notag
\end{align}

We will apply Theorem \ref{88} to $u_0^{n}$ and $x_n$.
We see at once that
\begin{align}
2 u_0^{n}(x_n) + b^{\frac{1}{2}}\|u_0^{n}\|_{L^2} = 2n^{-\frac{1}{2}}u_0(x_0) + b^{\frac{1}{2}} n^{-1}\|u_0\|_{L^2}. \notag
\end{align}
Also, 
\begin{align}
\partial _x u_0^n(x_n) &= n^{\frac{1}{2}}u_0'(x_0), \notag \\
\frac{2Bb^{\frac{1}{2}}\|u_0^n\|_{L^2}}{2u_0^n(x_n)+b^{\frac{1}{2}}\|u_0^n\|_{L^2}}  
&= \frac{2Bb^{\frac{1}{2}} n^{-1} \|u_0\|_{L^2}} {2 n^{-\frac{1}{2}}u_0^n(x_0)+b^{\frac{1}{2}} n^{-1}\|u_0\|_{L^2}}.\notag
\end{align}
Hence, for $n$ sufficiently large, $u_0^n$ and $x_n$ satisfy the assumptions of Theorem \ref{88}.
Since $m(0)= u_0'(x_0)$, we obtain 
\begin{align}
\label{eqlem41}
T_{max}(u_0^n) \le - \frac{1}{n^{\frac{1}{2}}m(0)}.
\end{align}

Next, set 
\begin{align}
\Phi _n(t)= 2Bb\|u_0^n\|_{L^{\infty}}+B b^{\frac{3}{2}}\|u_0^n\|_{L^2}+2B^2b^{\frac{3}{2}}\|u_0^n\|_{L^2}t.\notag
\end{align}
Then we have
\begin{align}
\Phi_n(1) & = 2Bb n^{-\frac{1}{2}}\|u_0\|_{L^{\infty}} + B b^{\frac{3}{2}} n^{-1}\|u_0^n\|_{L^2} +2B^2b^{\frac{3}{2}}n^{-1} \|u_0\|_{L^2} \notag \\
& \quad \rightarrow 0, \qquad (n \rightarrow \infty). \notag
\end{align}
Therefore, we obtain 
\begin{align}
\Phi_n(1)^{-\frac{1}{2}}\arctan \left(-\frac{\Phi_n(1)^{\frac{1}{2}}}{n^{\frac{1}{2}}m(0)} \right)
& = - \frac{1}{n^{\frac{1}{2}}m(0)} \,   \frac{\arctan \left(-\frac{\Phi_n(1)^{\frac{1}{2}}}{n^{\frac{1}{2}}m(0)} \right)} 
{-\frac{\Phi_n(1)^{\frac{1}{2}}}{n^{\frac{1}{2}}m(0)}} \notag \\
& \quad \rightarrow 0, \qquad (n \rightarrow \infty). \notag
\end{align}
In particular, $ 1 > \Phi _n(1)^{-\frac{1}{2}}\arctan \left(-\frac{\Phi _n(1)^{\frac{1}{2}}}{n^{\frac{1}{2}}m(0)} \right) $ holds for $n$ sufficiently large.
From this it follows that
\begin{align}
\label{eqlem42}
T_{u_0^n} \ge \Phi _n(1)^{-\frac{1}{2}}
\arctan \left(-\frac{\Phi _n(1)^{ \frac{1}{2} }  }   {n^{\frac{1}{2}}m(0)} \right).
\end{align} 
Combining \eqref{eqlem41} and \eqref{eqlem42}, we obtain 
\begin{align}
\frac{T_{u_0^n}}{T_{max}(u_0^n)} \ge \frac{\arctan \left(-\frac{\Phi _n(1)^{\frac{1}{2}}}{n^{\frac{1}{2}}m(0)} \right)} 
{-\frac{\Phi _n(1)^{\frac{1}{2}}}{n^{\frac{1}{2}}m(0)}}. \notag
\end{align}
Thus, we get
\begin{align}
\liminf_{n \rightarrow \infty} \frac{T_{u_0^n}}{T_{max}(u_0^n)} \ge 1. \notag
\end{align}
\end{proof}

\begin{rem}
This proof is adapted from that of \cite[Theorem 3.2]{danchin2001few},
where Danchin obtained a more simple lower bound of the maximal existence time for 
\eqref{5666}
and showed that the estimate is sharp.
To prove sharpness, he used \cite[Theorem 4.1]{constantin1998global}.
However, since the theorem strongly depends on the structure of 
\eqref{5666},
it does not seem to work for 
\eqref{eq2}.
For this reason, we use Theorem \ref{88} instead and obtain sharpness. 
%
%
%
%
\end{rem}

\begin{proof}[Proof of Theorem \ref{thm10}]
Set $m(t)= \inf_{x \in \mathbf{R}}u_x(t,x)$. 

First, we show that
\begin{align}
\label{nthm3eq1}
\limsup_{B \rightarrow 0 \, \mathrm{or} \, b \rightarrow 0} T_{max}^{B,b} \le - \frac{1}{m(0)}.
\end{align}
Fix $ \alpha \ge 1$ and $T(\alpha)>0$ sufficiently large.
Note that $m(0)<0$, and there exists $x_0 \in \mathbf{R}$ such that $m(0)=u_0'(x_0)$.
By Theorem \ref{thm1}, when $B>0$ or $b>0$ is small enough, we have 
\begin{align}
T_{max}^{B,b} \le -\frac{1+\frac{1}{\alpha}}{1-\frac{1}{2 \alpha}} \, \frac{1}{m(0)+\sqrt{-m(0)}F(T(\alpha),x_0)^{\frac{1}{4}}}, \notag
\end{align}
where $ F(t,x_0)= 2 B  b  u_0(x_0)  + B b^{\frac{3}{2}} \| u_0 \|_{L^2} +2B^2b^{\frac{3}{2}}\| u_0 \|_{L^2}t.$
From this it follows that
\begin{align}
\limsup_{B \rightarrow 0 \, \mathrm{or} \, b \rightarrow 0} T_{max}^{B,b} \le \notag
-\frac{1+\frac{1}{\alpha}}{1-\frac{1}{2 \alpha}} \frac{1}{m(0)}.
\end{align}
Letting $ \alpha \rightarrow \infty $ yields (\ref{nthm3eq1}).

Next, we show that
\begin{align}
\label{nthm3eq2}
\liminf_{B \rightarrow 0 \, \mathrm{or} \, b \rightarrow 0} T_{max}^{B,b} \ge - \frac{1}{m(0)}.
\end{align}
It is easily seen that 
\begin{align}
\lim_{\Phi(T)\rightarrow 0}\Phi(T)^{-\frac{1}{2}}\arctan \left(-\frac{\Phi(T)^{\frac{1}{2}}}{m(0)}\right) =-\frac{1}{m(0)}. \notag
\end{align}
Let $T>-\frac{1}{m(0)}$.
Then we have 
\begin{align}
T> \Phi(T)^{-\frac{1}{2}}\arctan \left(-\frac{\Phi(T)^{\frac{1}{2}}}{m(0)}\right)\notag
\end{align}
for  $\Phi(T) $ sufficiently small.
By Theorem \ref{nthm2}, it follows that 
\begin{align}
T_{max}^{B,b} \ge \Phi(T)^{-\frac{1}{2}}\arctan \left(-\frac{\Phi(T)^{\frac{1}{2}}}{m(0)}\right). \notag
\end{align}
Since $ B\rightarrow 0$ or $ b\rightarrow 0$ implies $ \Phi(T) \rightarrow 0$, 
we obtain (\ref{nthm3eq2}).
\end{proof}

\begin{lem}
\label{lem57}
Assume that $ u_0 \in H^s$, $s\ge3$.
Let $u^{B,b}$ and $v$ be the corresponding solutions of \eqref{eq2} and of \eqref{eq7}, respectively.
Then 
\begin{align*}
\| u^{B,b}-v \|_{L^{\infty}([0,T];L^{\infty})}
\le B b^{\frac{1}{2}} T  \|u_0\|_{L^2}  \mathrm{exp}\left( \|v_x\|_{L^1((0,T);L^{\infty})} \right)
\end{align*}
for any $T \in (0,-1 /\inf_{x \in \mathbf{R}}u_0'(x))$.
\end{lem}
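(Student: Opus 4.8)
The plan is to estimate $w := u^{B,b} - v$ along the characteristics of the transport operator $\partial_t + u^{B,b}\partial_x$, in the same spirit as Lemmas \ref{lem1} and \ref{lem66}. First I would subtract \eqref{eq7} from \eqref{eq2} and use the identity $u\,u_x - v\,v_x = u\,w_x + w\,v_x$ to obtain
\[
\partial_t w + u^{B,b}\partial_x w = -\,w\,\partial_x v \;-\; \int_{\mathbf{R}} B\,\mathrm{e}^{-b|x-\xi|}\,\partial_\xi u^{B,b}(t,\xi)\,d\xi .
\]
Let $q(t,x_0)$ be the flow of $u^{B,b}$ given by \eqref{eq3}, which is well defined and, for each $t$, a diffeomorphism of $\mathbf{R}$ since $u^{B,b}\in C([0,T_0);H^3)$. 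Setting $W(t)=w(t,q(t,x_0))$, the left-hand side becomes $\tfrac{d}{dt}W$, and since $u^{B,b}$ and $v$ share the datum $u_0$ we have $W(0)=0$; integrating in time gives
\[
W(t) = -\int_0^t W(\tau)\,\partial_x v(\tau,q(\tau,x_0))\,d\tau \;-\; \int_0^t\!\left(\int_{\mathbf{R}} B\,\mathrm{e}^{-b|q(\tau,x_0)-\xi|}\,\partial_\xi u^{B,b}(\tau,\xi)\,d\xi\right)d\tau .
\]

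Next I would bound the two terms. For the first, $|\partial_x v(\tau,q(\tau,x_0))|\le \|v_x(\tau)\|_{L^\infty}$, and $\|v_x\|_{L^1((0,T);L^\infty)}<\infty$ because $T$ lies strictly below the blow-up time $-1/\inf_x u_0'(x)$ of the Burgers solution $v$, so classical theory gives $v\in C([0,T];H^s)$ with $v_x$ bounded. For the inhomogeneous term, integration by parts rewrites it as $\int_{\mathbf{R}} Bb\,\mathrm{sgn}(q(\tau,x_0)-\xi)\,\mathrm{e}^{-b|q(\tau,x_0)-\xi|}u^{B,b}(\tau,\xi)\,d\xi$, which by Cauchy--Schwarz, $\|\mathrm{e}^{-b|\cdot|}\|_{L^2}=b^{-1/2}$ and the $L^2$ conservation law $\|u^{B,b}(\tau)\|_{L^2}=\|u_0\|_{L^2}$ is bounded by $Bb^{1/2}\|u_0\|_{L^2}$ — precisely the estimate already carried out in Lemma \ref{lem1}. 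Hence
\[
|W(t)| \le Bb^{1/2}\|u_0\|_{L^2}\,t + \int_0^t \|v_x(\tau)\|_{L^\infty}\,|W(\tau)|\,d\tau, \qquad t\in[0,T],
\]
and Gronwall's inequality yields $|W(t)|\le Bb^{1/2}\|u_0\|_{L^2}\,T\,\exp\!\big(\|v_x\|_{L^1((0,T);L^\infty)}\big)$. Since the bound is uniform in $x_0$ and $q(t,\cdot)$ is onto $\mathbf{R}$, we get $\|w(t)\|_{L^\infty}=\sup_{x_0}|W(t)|\le Bb^{1/2}\|u_0\|_{L^2}\,T\,\exp(\|v_x\|_{L^1((0,T);L^\infty)})$, and taking the supremum over $t\in[0,T]$ completes the proof.

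The analytic content is routine; the only points needing care are bookkeeping about existence intervals. Strictly, the estimate is first derived on $[0,\min\{T,T_{max}^{B,b}\})$; combined with the continuation criterion of Proposition \ref{pro2} (equivalently Theorem \ref{thm43}) this suffices for the intended applications, and when $u^{B,b}$ is known to exist on $[0,T]$ the bound holds there directly. One must also keep $T<-1/\inf_x u_0'(x)$ so that $v$ is $C^1$ with $v_x\in L^1_tL^\infty_x$ on $[0,T]$; this is exactly the hypothesis. I therefore expect the main (mild) obstacle to be organizing these intervals rather than any genuine estimate.
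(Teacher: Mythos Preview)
Your proof is correct and follows essentially the same route as the paper: both differentiate $u^{B,b}-v$ along the characteristics of $u^{B,b}$, bound the nonlocal term by $Bb^{1/2}\|u_0\|_{L^2}$ via integration by parts and Cauchy--Schwarz, and close with Gronwall. The only cosmetic difference is that the paper first passes to $\|u^{B,b}(t)-v(t)\|_{L^\infty}$ in the integral inequality and then applies Gronwall, whereas you apply Gronwall pointwise for $|W(t)|$ and take the supremum afterwards; both are valid and give the same bound.
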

\begin{proof}
Let $q(t,x_0)$ be the solution of the following differential equation:
\begin{align}
\left \{
\begin{aligned}
\notag
& \frac{dq}{dt}=u^{B,b}(t,q), \quad t \in [0,T_{max}^{B,b}), \\
& q(0,x_0) = x_0, \quad x_0 \in \mathbf{R}.
\end{aligned}
\right.
\end{align}
By the integration by parts, we obtain 
\begin{align}
&\frac{d}{dt}[u^{B,b}(t,q(t,x_0))-v(t,q(t,x_0))] \notag \\
& =- B \int_{\mathbf{R}} \mathrm{e}^{-b|q(t,x_0)-\xi|} u_{\xi}^{B,b}(t,\xi) d\xi + \left(v(t,q(t,x_0)) - u^{B,b}(t,q(t,x_0)) \right)v_x(t,q(t,x_0))  \notag \\
& \le B b^{\frac{1}{2}}\|u_0\|_{L^2} + \| v_x(t) \|_{L^{\infty}} \|  u^{B,b}(t) -v(t) \|_{L^{\infty}}. \notag
\end{align}
In the same way, we have 
\begin{align}
\frac{d}{dt}[v(t,q(t,x_0))-u^{B,b}(t,q(t,x_0))] \le B b^{\frac{1}{2}}\|u_0\|_{L^2} 
+  \| v_x(t) \|_{L^{\infty}} \|  u^{B,b}(t) -v(t) \|_{L^{\infty}}. \notag
\end{align}
Since $u^{B,b}(t)$ and $v(t)$ have the same initial data, and $q(t,\cdot)$ is a diffeomorphism of $\mathbf{R}$, 
integrating these inequalities on $[0,t]$ yields
\begin{align}
\| u^{B,b}(t)-v(t) \|_{L^{\infty}}
\le B b^{\frac{1}{2}}\|u_0\|_{L^2} t + \int_0^t \| v_x(s) \|_{L^{\infty}} \|  u^{B,b}(s) -v(s) \|_{L^{\infty}} ds. \notag
\end{align}
The Gronwall inequality completes the proof.
\end{proof}

\begin{thm}
Assume that $s\ge3$ and $ u_0 \in H^s$.
Let $u^{B,b}$ and $v$ be the corresponding solutions of \eqref{eq2} and of \eqref{eq7}, respectively.
Then
\begin{align*}
\lim_{B \rightarrow 0 \, \mathrm{or} \, b \rightarrow 0} \| u^{B,b}-v \|_{{L^{\infty}([0,T];H^s)}} =0
\end{align*}
for any $T \in (0,-1 /\inf_{x \in \mathbf{R}}u_0'(x))$.
\end{thm}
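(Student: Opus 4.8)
The plan is to establish convergence in $C([0,T];H^{s-1})$ --- hence, by interpolation, in $C([0,T];H^{s'})$ for every $s'<s$ --- and then to reach the endpoint $s'=s$ by a Bona--Smith regularization argument. Throughout I would fix $T<-1/m_0$, where $m_0:=\inf_{x\in\mathbf{R}}u_0'(x)$; by Theorem \ref{thm10} the maximal existence time $T_{max}^{B,b}$ exceeds $T$ once $B$ or $b$ is small enough, so $u^{B,b}$ is defined on $[0,T]$. The first ingredient is a bound $\sup_{t\in[0,T]}\|u^{B,b}(t)\|_{H^s}\le M$ with $M$ independent of $(B,b)$ near the limit. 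To get it I would control $\|\partial_xu^{B,b}\|_{L^1((0,T);L^\infty)}$: Lemma \ref{lem66} bounds $\sup_x\partial_xu^{B,b}(t,x)$ from above on $[0,T]$ by a quantity staying bounded as $B\to0$ or $b\to0$, while a lower bound for $m^{B,b}(t):=\inf_x\partial_xu^{B,b}(t,x)$ follows, as in the proof of Theorem \ref{nthm2}, from $\frac{dm^{B,b}}{dt}\ge-(m^{B,b})^2-\Phi(T)$ by comparison with the ODE $\bar m'=-\bar m^2-\Phi(T)$, $\bar m(0)=m_0$, whose decreasing solution tends uniformly on $[0,T]$ to the Burgers profile $m_0/(1+m_0t)$ (finite since $T<-1/m_0$) as $\Phi(T)\to0$. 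The a priori estimate used in the proof of Proposition \ref{pro2} then yields $M$; its constant is insensitive to the nonlocal term because that operator has a purely imaginary Fourier symbol, hence is skew-adjoint in $H^s$ and contributes nothing to $\frac{d}{dt}\|u^{B,b}\|_{H^s}^2$. Finally $v\in C([0,T];H^s)$ by the local theory for \eqref{eq7}, so the orbit $\{v(t):t\in[0,T]\}$ is a compact subset of $H^s$.

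Next I would set $w:=u^{B,b}-v$; from $u^{B,b}\partial_xu^{B,b}-v\partial_xv=u^{B,b}\partial_xw+w\,\partial_xv$ we get $\partial_tw+u^{B,b}\partial_xw+w\,\partial_xv=-g^{B,b}$, $w(0)=0$, where $g^{B,b}=\mathcal{F}^{-1}[i\,m_{B,b}(\xi)\widehat{u^{B,b}}]$ and $m_{B,b}(\xi)=\frac{2Bb\xi}{b^2+\xi^2}$. A standard energy estimate in $H^{s-1}$ (after mollifying the transport term, using a commutator estimate and the algebra property of $H^{s-1}$ together with the bounds above) gives, on $[0,T]$,
\[
\frac{d}{dt}\|w\|_{H^{s-1}}^2\le C_M\|w\|_{H^{s-1}}^2+2\|g^{B,b}\|_{H^{s-1}}\|w\|_{H^{s-1}}.
\]
I would split $g^{B,b}=\mathcal{F}^{-1}[i\,m_{B,b}\widehat v]+\mathcal{F}^{-1}[i\,m_{B,b}\widehat w]$; since $\|m_{B,b}\|_{L^\infty}=B$ (the maximum being attained at $|\xi|=b$), the second piece is $\le B\|w\|_{H^{s-1}}$ and is absorbed into the Gronwall term, while the first piece is $\le B\sup_{t\in[0,T]}\|v(t)\|_{H^s}$, which tends to $0$ as $B\to0$, and, for $B$ bounded, also tends to $0$ as $b\to0$ uniformly in $t\in[0,T]$: indeed $m_{B,b}\to0$ pointwise with $|m_{B,b}|\le B$, so dominated convergence gives $\|\mathcal{F}^{-1}[i\,m_{B,b}\widehat f]\|_{H^s}\to0$ for each fixed $f\in H^s$, and this convergence is uniform over the compact orbit $\{v(t)\}$. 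Gronwall with $w(0)=0$ then gives $\|u^{B,b}-v\|_{L^\infty([0,T];H^{s-1})}\to0$ as $B\to0$ or $b\to0$, and interpolating with $\sup_{t\in[0,T]}\|u^{B,b}(t)-v(t)\|_{H^s}\le2M$ yields the same convergence in $H^{s'}$ for every $s'<s$.

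The remaining step, $s'=s$, is the main obstacle: the energy estimate for $w$ does not close in $H^s$, because the term $w\,\partial_xv$ would require $v\in H^{s+1}$, which fails for generic $u_0\in H^s$. The plan is the Bona--Smith scheme. Put $u_0^\delta:=J_\delta u_0$ and let $u^{B,b,\delta}$, $v^\delta$ be the corresponding solutions of \eqref{eq2} and \eqref{eq7}. One shows: (i) $\sup_{(B,b)}\|u^{B,b}-u^{B,b,\delta}\|_{L^\infty([0,T];H^s)}\le\Omega(\delta)$ with $\Omega(\delta)\to0$ as $\delta\to0$, via the Bona--Smith estimate for \eqref{eq2}, in which $\|u^{B,b}-u^{B,b,\delta}\|_{H^s}$ is controlled by the interplay of the $H^{s-1}$-difference (of size $O(\delta)$), the $H^{s+1}$-norm of $u^{B,b,\delta}$ (of size $O(\delta^{-1})$) and the fact that $\|u_0-u_0^\delta\|_{H^s}\to0$, with all constants uniform in $(B,b)$ thanks to the uniform $H^s$ bounds and because the nonlocal contribution $\mathcal{F}^{-1}[i\,m_{B,b}(\widehat{u^{B,b}}-\widehat{u^{B,b,\delta}})]$, having a purely imaginary symbol, drops out of each energy identity; (ii) likewise $\|v-v^\delta\|_{L^\infty([0,T];H^s)}\le\Omega(\delta)$; (iii) for each fixed $\delta$, $\|u^{B,b,\delta}-v^\delta\|_{L^\infty([0,T];H^s)}\to0$ as $B\to0$ or $b\to0$ --- the $H^s$ energy estimate for $u^{B,b,\delta}-v^\delta$ now closing since $v^\delta\in C([0,T];H^{s+1})$, with $g^{B,b}$ handled exactly as above (with $v$ replaced by $v^\delta$). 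Combining (i)--(iii) by the triangle inequality and letting first $(B,b)\to0$, then $\delta\to0$, finishes the proof. The genuinely delicate point is (i): obtaining a Bona--Smith modulus $\Omega(\delta)$ that is uniform in $(B,b)$, which is where one must use that although $m_{B,b}$ stays of size $B$ in $L^\infty$ as $b\to0$, it tends to $0$ pointwise and so must be paired against the fixed limit $v$ rather than against $u^{B,b}$.
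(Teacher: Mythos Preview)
Your argument is correct and arrives at the same endpoint (Bona--Smith) as the paper, but the path to the low-regularity convergence is genuinely different. The paper first establishes $L^\infty_{t,x}$-convergence via the characteristics estimate of Lemma~\ref{lem57}, then exploits the $L^2$-conservation law (weak convergence plus equality of norms) to upgrade to $L^2([0,T];L^2)$ and $L^1([0,T];L^2)$, and only then runs an $L^2$ energy inequality for $w=u^{B,b}-v$ whose right-hand side is controlled by those two quantities together with a uniform bound on $\|u^{B,b}_x\|_{L^\infty_{t,x}}$. You instead go straight to an $H^{s-1}$ energy estimate for $w$, splitting the forcing as $\mathcal{F}^{-1}[im_{B,b}\widehat v]+\mathcal{F}^{-1}[im_{B,b}\widehat w]$ and using $\|m_{B,b}\|_{L^\infty}=B$ plus dominated convergence over the compact orbit $\{v(t)\}$. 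Your route is more standard and avoids the somewhat indirect weak/strong $L^2$ detour; the paper's route, on the other hand, extracts more from the specific structure of \eqref{eq2} (characteristics and the conserved $L^2$ norm) and needs no commutator estimate at the low-regularity stage. Both proofs then obtain the uniform $H^s$ bound by the same mechanism---Lemma~\ref{lem66} for $\sup_x u^{B,b}_x$ and the comparison ODE from Theorem~\ref{nthm2} for $\inf_x u^{B,b}_x$, fed into the estimate of \cite{koch2003local}---and both close with Bona--Smith; you spell out the three ingredients (i)--(iii) where the paper simply invokes \cite{bona1975initial}. One small remark: your final sentence about pairing $m_{B,b}$ against $v$ rather than $u^{B,b}$ belongs to step~(iii), not to step~(i); in~(i) the nonlocal term acts on the \emph{difference} $u^{B,b}-u^{B,b,\delta}$ and, being skew-adjoint, drops out of the energy identity regardless of the size of $m_{B,b}$, so the uniformity of $\Omega(\delta)$ in $(B,b)$ comes purely from the uniform $H^s$ bounds you already established.
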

\begin{proof}
By Theorem \ref{lem57}, $ u^{B,b} \rightarrow v $ in $ L^{\infty}([0,T];L^{\infty})$ as $ B \rightarrow 0$ or $b \rightarrow 0$.
Also, we have $ \|u^{B,b}(t)\|_{L^2}= \|u_0\|_{L^2}$.
Therefore we see that $ u^{B,b} \rightarrow v $ weekly in $ L^2([0,T];L^2)$ as $ B \rightarrow 0$ or $b \rightarrow 0$.
Since $ L^2([0,T];L^2)$ is a Hilbert space, using $ \|u^{B,b}(t)\|_{L^2}= \|u_0\|_{L^2}$ again, 
we conclude that $ u^{B,b} \rightarrow v $ in $ L^2([0,T];L^2)$ as $ B \rightarrow 0$ or $b \rightarrow 0$.
Moreover, we see that $ u^{B,b} \rightarrow v $ in $ L^1([0,T];L^2)$ as $ B \rightarrow 0$ or $b \rightarrow 0$
because $[0,T]$ has a finite measure.

Let us examine $\|u^{B,b}(t)-v(t)\|_{L^2}$. Using the integration by parts, we obtain
\begin{align*}
 &\frac{d}{dt}\|u^{{B},{b}}(t) - v(t)\|_{L^2}^2 \\
& = -(u^{{B},{b}} u^{{B},{b}}_x -v v_x, u^{{B},{b}} - v) 
-(\int _{\mathbf{R}} B \mathrm{e}^{-b|x-\xi|}u^{{B},{b}}_{\xi}(t,\xi) \, d \xi, 
 u^{{B},{b}} - v) \\
&  \le (   \| u^{{B},{b}}_x(t) \|_{L^{\infty}} + \|v_x(t)\|_{L^{\infty}}) \| u^{{B},{b}}(t) - v(t)\|_{L^2}^2
+ 2B \|u_0\|_{L^2} \| u^{{B},{b}}(t) - v(t)\|_{L^2}.
\end{align*}
Integrating this on $[0,T]$ implies 
\begin{align*}
& \|u^{B,b}-v\|^2_{L^{\infty}([0,T];L^2)} \\
&  \le  (\| u^{{B},{b}}_x \|_{L^{\infty}([0,T];L^{\infty})} + \|v_x\|_{L^{\infty}([0,T];L^{\infty})}) \| u^{{B},{b}} - v\|_{L^{2}([0,T];L^2)}^2 \\
& \quad  + 2B \|u_0\|_{L^2} \| u^{{B},{b}} - v\|_{L^{1}([0,T];L^{2})}.
\end{align*}
Hence, if $ \| u^{{B},{b}}_x \|_{L^{\infty}([0,T];L^{\infty})}$ is bounded when $B$ or $b$ is sufficiently small, 
\begin{align*}
\lim_{B \rightarrow 0 \, \mathrm{or} \, b \rightarrow 0} \|u^{B,b}-v\|_{L^{\infty}([0,T];L^2)} =0
\end{align*}
follows.  

By Lemma \ref{lem66}, we have
\begin{align*}
\sup_{x \in \mathbf{R}}u^{B,b}_x(t,x) \le
\sup_{x \in \mathbf{R}}u_0'(x) + 2B b  \|u_0\|_{L^{\infty}} t + B b^{\frac{3}{2}}\| u_0 \|_{L^2}t  +   B^2 b^{\frac{3}{2}}\| u_0 \|_{L^2} t^2. 
\end{align*}
Therefore $\sup_{x \in \mathbf{R}}u^{B,b}_x(t,x)$ is bounded on $[0,T]$.
In the same manner as \eqref{nthm2eq2}, we have
\begin{align*}
-\inf_{x \in \mathbf{R}} u^{B,b}_x(t,x) \le 
\Phi(T)^{\frac{1}{2}} \frac{\tan(\Phi(T)^{\frac{1}{2}}T) 
- \Phi(T)^{-\frac{1}{2}}m(0)}{1+\Phi(T)^{-\frac{1}{2}}m(0)\tan(\Phi(T)^{\frac{1}{2}}T)}, \quad  t \in [0,T],
\end{align*}
where $ \Phi(T)= 2Bb\|u_0\|_{L^{\infty}}+B b^{\frac{3}{2}}\|u_0\|_{L^2}+2B^2b^{\frac{3}{2}}\|u_0\|_{L^2}T $,
and $m(0)=\inf_{x \in \mathbf{R}} u_0'(x)$. 
It is clear that 
\begin{align*}
\lim_{B \rightarrow 0 \, \mathrm{or} \, b \rightarrow 0} 
\Phi(T)^{\frac{1}{2}} \frac{\tan(\Phi(T)^{\frac{1}{2}}T)- \Phi(T)^{-\frac{1}{2}}m(0)}{1+\Phi(T)^{-\frac{1}{2}}m(0)\tan(\Phi(T)^{\frac{1}{2}}T)}
= \frac{1}{-\frac{1}{m(0)} - T}.
\end{align*}
Therefore $ - \inf_{x \in \mathbf{R}}u^{B,b}_x(t,x)$ is bounded on $[0,T]$ when $B$ or $b$ is sufficiently small.
Thus $ \| u^{{B},{b}}_x \|_{L^{\infty}([0,T];L^{\infty})}$ is bounded, which implies
\begin{align*}
\lim_{B \rightarrow 0 \, \mathrm{or} \, b \rightarrow 0} \|u^{B,b}-v\|_{L^{\infty}([0,T];L^2)} =0.
\end{align*}

For $ t \in [0,T]$, we have the following inequality (see \cite[Lemma 3.6]{koch2003local}):
\begin{align}
\| u^{B,b}(t) \|_{H^s} 
& \le \| u_0 \|_{H^s} \mathrm{exp}\bigl(c \|u^{B,b}_x \|_{L^1((0,T);L^{\infty})}\bigr) \notag \\
&\le C \| u_0 \|_{H^s}, \notag
\end{align}
where the constant $C$ is independent of $B$ and $b$.
The same inequality also holds for $v$.
Hence by the Bona-Smith approximation argument (see \cite{bona1975initial}), it follows that  
\begin{align}
\lim_{B \rightarrow 0 \, \mathrm{or} \, b \rightarrow 0} \| u^{B,b} - v\|_{L^{\infty}([0,T];H^s)} =0. \notag
\end{align}
\end{proof}

\section{Continuous dependence and the proof of Theorem \ref{thm11}}
\label{sec5}
In this section, we present the continuous dependence on $(B,b)$
and the proof of Theorem \ref{thm11}.


We start with the continuous dependence in $L^2$.

\begin{lem}
\label{lem2}
Let $s\ge3$ and $u^{\tilde{B},\tilde{b}}_0, \, u^{{B},{b}}_0 \in H^s$. 
Assume that the corresponding solutions $ u^{\tilde{B},\tilde{b}} $ and $ u^{{B},{b}} $ of (\ref{eq2}) satisfy
\begin{align}
& \| u^{\tilde{B},\tilde{b}}(t) \|_{H^s} \le 2 \| u^{\tilde{B},\tilde{b}}_0 \|_{H^s}, \quad \| u^{{B},{b}}(t) \|_{H^s} \le 2 \| u^{{B},{b}}_0 \|_{H^s} \notag
\end{align}
for $ t \in [0,t_0)$.
Then
\begin{align}
& \|u^{\tilde{B},\tilde{b}} - u^{{B},{b}}\|_{L^{\infty}([0,t_0];L^2)}^2 \notag \\
& \le \left(\|u_0^{\tilde{B},\tilde{b}} - u_0^{{B},{b}}\|_{L^2}^2 
+ 4 t_0 \|u_0^{\tilde{B},\tilde{b}}\|_{H^s} \| u_0^{{B},{b}}\|_{H^s} 
\biggl(\frac{\tilde{B}|\tilde{b}-b|}{\tilde{b}{b}}+\frac{\tilde{B}-B}{b}\biggr) \right) \notag \\
& \qquad \times \mathrm{exp} \left(2t_0 \big( \|u_0^{\tilde{B},\tilde{b}}\|_{H^s} + \| u_0^{{B},{b}}\|_{H^s} \big) \right). \notag
\end{align}
\end{lem}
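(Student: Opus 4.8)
Write $\tilde u:=u^{\tilde B,\tilde b}$, $u:=u^{B,b}$ and $w:=\tilde u-u$. The plan is a standard $L^2$ energy estimate for $w$; the only genuinely new ingredient is the nonlocal term, which I would split into a piece acting on $w$ and a piece depending only on the four parameters.

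First I would subtract the two copies of \eqref{eq2}. Using $\tilde u\tilde u_x-uu_x=\frac{1}{2}\partial_x\bigl(w(\tilde u+u)\bigr)$, the difference $w$ solves
\[
\partial_t w+\frac{1}{2}\partial_x\bigl(w(\tilde u+u)\bigr)+\int_{\mathbf R}\tilde B\mathrm{e}^{-\tilde b|x-\xi|}w_\xi\,d\xi+\int_{\mathbf R}\bigl(\tilde B\mathrm{e}^{-\tilde b|x-\xi|}-B\mathrm{e}^{-b|x-\xi|}\bigr)u_\xi\,d\xi=0 .
\]
Pairing with $w$ in $L^2$ and integrating by parts, the nonlinear term contributes a quantity bounded by $\bigl(\|\tilde u_x\|_{L^\infty}+\|u_x\|_{L^\infty}\bigr)\|w\|_{L^2}^2$, hence, by the Sobolev embedding $H^s\hookrightarrow W^{1,\infty}$ and the hypotheses $\|\tilde u(t)\|_{H^s}\le2\|u_0^{\tilde B,\tilde b}\|_{H^s}$, $\|u(t)\|_{H^s}\le2\|u_0^{B,b}\|_{H^s}$, by $2\bigl(\|u_0^{\tilde B,\tilde b}\|_{H^s}+\|u_0^{B,b}\|_{H^s}\bigr)\|w\|_{L^2}^2$.

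The decisive point is the nonlocal term. After the integration by parts already performed in the proof of Lemma \ref{lem1}, the operator $v\mapsto\int_{\mathbf R}\tilde B\mathrm{e}^{-\tilde b|\cdot-\xi|}v_\xi\,d\xi$ has the kernel $\tilde B\tilde b\,\mathrm{sgn}(x-\xi)\mathrm{e}^{-\tilde b|x-\xi|}$, which is antisymmetric in $(x,\xi)$ (equivalently, it has a purely imaginary Fourier multiplier), so it is skew-adjoint on $L^2$ and therefore $\bigl(\int_{\mathbf R}\tilde B\mathrm{e}^{-\tilde b|\cdot-\xi|}w_\xi\,d\xi,\,w\bigr)_{L^2}=0$. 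Only the parameter piece survives, and I would estimate it by Young's convolution inequality: with $\tilde K(x):=\tilde B\mathrm{e}^{-\tilde b|x|}$, $K(x):=B\mathrm{e}^{-b|x|}$, the splitting
\[
\tilde K-K=\tilde B\bigl(\mathrm{e}^{-\tilde b|\cdot|}-\mathrm{e}^{-b|\cdot|}\bigr)+(\tilde B-B)\mathrm{e}^{-b|\cdot|}
\]
gives $\|\tilde K-K\|_{L^1}\le2\bigl(\tilde B|\tilde b-b|/(\tilde b\,b)+|\tilde B-B|/b\bigr)$, the two summands being $2\tilde B|\tilde b-b|/(\tilde b b)$ and $2|\tilde B-B|/b$ by an elementary one-dimensional integration; hence $\bigl\|(\tilde K-K)*u_x\bigr\|_{L^2}\le\|\tilde K-K\|_{L^1}\|u_x\|_{L^2}\le2\|\tilde K-K\|_{L^1}\|u_0^{B,b}\|_{H^s}$.

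Collecting the estimates produces a differential inequality of the form
\[
\frac{d}{dt}\|w(t)\|_{L^2}^2\le2\bigl(\|u_0^{\tilde B,\tilde b}\|_{H^s}+\|u_0^{B,b}\|_{H^s}\bigr)\|w(t)\|_{L^2}^2+C\,\|u_0^{\tilde B,\tilde b}\|_{H^s}\|u_0^{B,b}\|_{H^s}\Bigl(\frac{\tilde B|\tilde b-b|}{\tilde b\,b}+\frac{\tilde B-B}{b}\Bigr),
\]
where the extra factor $\|w(t)\|_{L^2}$ coming from the Cauchy--Schwarz bound on the parameter term has been absorbed a priori through $\|w(t)\|_{L^2}\le\|\tilde u(t)\|_{L^2}+\|u(t)\|_{L^2}$ and the $L^2$ conservation law $\|u^{B,b}(t)\|_{L^2}=\|u_0^{B,b}\|_{L^2}$. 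Gronwall's inequality on $[0,t_0]$ then yields the stated estimate (the numerical constants are not optimized). The only real obstacle is the nonlocal term: one has to recognize that the $w_\xi$-piece pairs to zero, so that none of $\tilde B,\tilde b,B,b$ enters the exponential, and to compute the correct $L^1$-size of $\tilde K-K$, which is precisely the origin of the factors $|\tilde b-b|/(\tilde b b)$ and $1/b$ in the statement; the remainder is a routine $L^2$ energy argument.
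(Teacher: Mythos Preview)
Your plan is correct and follows essentially the same $L^2$ energy/Gronwall argument as the paper, with the same computation of $\|\tilde K-K\|_{L^1}$. The only difference is in the algebra for the nonlocal term: you split $\tilde K*\tilde u_x-K*u_x=\tilde K*w_x+(\tilde K-K)*u_x$ and kill the first piece by skew-adjointness, leaving $((\tilde K-K)*u_x,w)$, whereas the paper uses skew-adjointness on the diagonal pairings $(\tilde K*\tilde u_x,\tilde u)$, $(K*u_x,u)$ and once more on the cross term to reduce directly to $(\tilde u,(\tilde K-K)*u_x)$. The paper's rearrangement gives the product $\|u_0^{\tilde B,\tilde b}\|_{H^s}\|u_0^{B,b}\|_{H^s}$ in the inhomogeneous term without your extra step of crudely bounding $\|w\|_{L^2}$ via $L^2$ conservation; that is the sole reason your constant does not match the stated one exactly, and it is harmless for the application.
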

\begin{proof}
It is easily seen that 
\begin{align}
& \frac{d}{dt}\|u^{\tilde{B},\tilde{b}}(t) - u^{{B},{b}}(t)\|_{L^2}^2 \notag \\
& = -( u^{\tilde{B},\tilde{b}} u^{\tilde{B},\tilde{b}}_x - u^{{B},{b}} u^{{B},{b}}_x, u^{\tilde{B},\tilde{b}} - u^{{B},{b}}) \notag \\
& \quad -(\int _{\mathbf{R}} \tilde{B} \mathrm{e}^{-\tilde{b}|x-\xi|}u^{\tilde{B},\tilde{b}}_{\xi}(t,\xi) \, d \xi 
- \int _{\mathbf{R}} B \mathrm{e}^{-b|x-\xi|}u^{{B},{b}}_{\xi}(t,\xi) \, d \xi, 
u^{\tilde{B},\tilde{b}} - u^{{B},{b}}). \notag 
\end{align}
By the integration by parts, we obtain 
\begin{align}
\begin{aligned}
\label{eqlem2.6}
(\mathrm{First} \, \, \mathrm{term} \, \, \mathrm{of} \, \, \mathrm{RHS}) 
& \le ( \| u^{\tilde{B},\tilde{b}}_x(t) \|_{L^{\infty}} +   \| u^{{B},{b}}_x(t) \|_{L^{\infty}} ) \|u^{\tilde{B},\tilde{b}}(t) - u^{{B},{b}}(t)\|_{L^2}^2 \\
& \le 2(\| u^{\tilde{B},\tilde{b}}_0 \|_{H^s} +   \| u^{{B},{b}}_0 \|_{H^s} )\|u^{\tilde{B},\tilde{b}}(t) - u^{{B},{b}}(t)\|_{L^2}^2.
\end{aligned}
\end{align}
Next, we estimate the second term. 
Note that for any $\tilde{b}$, $b>0$, we have 
\begin{align}
\label{eqlem2.5}
\|\mathrm{e}^{-\tilde{b}| \cdot |} - \mathrm{e}^{-b| \cdot |} \|_{L^1} = 2 \frac{|\tilde{b}-b|}{\tilde{b}b}
\end{align}
because 
\begin{align}
\left|\mathrm{e}^{-\tilde{b}| x |} - \mathrm{e}^{-b| x |} \right| = \mathrm{e}^{-\tilde{b}| x |} - \mathrm{e}^{-b| x |}
\quad \mathrm{or} \quad 
\left|\mathrm{e}^{-\tilde{b}| x |} - \mathrm{e}^{-b| x |} \right| = \mathrm{e}^{-b| x |} - \mathrm{e}^{-\tilde{b}| x |} \notag  
\end{align}
always holds.
Using the integration by parts, the Cauchy-Schwarz inequality, Young's inequality and the equality (\ref{eqlem2.5}), we obtain 
\begin{align}
\begin{aligned}
\label{eqlem2.7}
(\mathrm{Second} \, \, \mathrm{term} \, \, \mathrm{of} \, \, \mathrm{RHS}) &= ( u^{\tilde{B},\tilde{b}}, 
\int _{\mathbf{R}} ( \tilde{B} \mathrm{e}^{-\tilde{b}|x-\xi|}-B \mathrm{e}^{-b|x-\xi|})u^{{B},{b}}_{\xi}(t,\xi) \, d \xi) \\
& \le \| u^{\tilde{B},\tilde{b}}(t) \|_{L^2} \| u^{{B},{b}}_x(t) \|_{L^2} \| \tilde{B}\mathrm{e}^{-\tilde{b}| \cdot |} - B \mathrm{e}^{-b| \cdot |} \|_{L^1} \\
& \le 2 \| u^{\tilde{B},\tilde{b}}_0 \|_{H^s} \| u^{{B},{b}}_0 \|_{H^s} \\
& \qquad \times \left( \tilde{B} \|\mathrm{e}^{-\tilde{b}| \cdot |} - \mathrm{e}^{-b| \cdot |} \|_{L^1} 
+| \tilde{B} - B | \| \mathrm{e}^{-b| \cdot |} \|_{L^1} \right) \\
& = 4 \| u^{\tilde{B},\tilde{b}}_0 \|_{H^s} \| u^{{B},{b}}_0 \|_{H^s} 
\left(\frac{\tilde{B}|\tilde{b}-b|}{\tilde{b}b} + \frac{|\tilde{B}-B|}{b} \right)
\end{aligned}
\end{align}
Combining \eqref{eqlem2.6} and \eqref{eqlem2.7}, we get
\begin{align}
 \frac{d}{dt}\|u^{\tilde{B},\tilde{b}}(t) - u^{{B},{b}}(t)\|_{L^2}^2 \notag 
& \le 4 \| u^{\tilde{B},\tilde{b}}_0 \|_{H^s} \| u^{{B},{b}}_0 \|_{H^s} 
 \left(\frac{\tilde{B}|\tilde{b}-b|}{\tilde{b}b} + \frac{|\tilde{B}-B|}{b} \right) \notag \\
& \quad + 2(\| u^{\tilde{B},\tilde{b}}_0 \|_{H^s} +   \| u^{{B},{b}}_0 \|_{H^s} )\|u^{\tilde{B},\tilde{b}}(t) - u^{{B},{b}}(t)\|_{L^2}^2. \notag
\end{align}
The Gronwall inequality completes the proof.
\end{proof}

Next, using this lemma, we prove the continuous dependence in $H^s$.

\begin{thm}
\label{thm2}
Assume that $s\ge3$ and $u^{\tilde{B},\tilde{b}}_0, \, u^{{B},{b}}_0 \in H^s$. 
Let $T_{\mathrm{max}}^{\tilde{B},\tilde{b}}$ and $T_{\mathrm{max}}^{B,b}$ 
be the maximal existence time of the corresponding solutions $u^{\tilde{B},\tilde{b}}$ and $u^{B,b}$ of (\ref{eq2}), respevtively. 
If 
\begin{align}
\lim_{(\tilde{B},\tilde{b}) \rightarrow (B,b)}\|u_0^{\tilde{B},\tilde{b}} - u_0^{B,b}\|_{H^s} =0, \notag
\end{align}
then $T_{\mathrm{max}}^{B,b} \le \liminf_{(\tilde{B},\tilde{b}) \rightarrow (B,b)} T_{\mathrm{max}}^{\tilde{B},\tilde{b}}$, and
\begin{align}
\lim_{(\tilde{B},\tilde{b}) \rightarrow (B,b)}\|u^{\tilde{B},\tilde{b}} - u^{B,b}\|_{L^{\infty}([0,T];H^s)} =0 \notag
\end{align}
for any $ T \in (0,T_{\mathrm{max}}^{B,b}) $.
\end{thm}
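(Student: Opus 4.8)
The plan is to combine the $L^2$ continuous dependence estimate of Lemma \ref{lem2} with the Sobolev-space persistence machinery used in Proposition \ref{pro1} and in the proof of the previous theorem (the Bona--Smith approximation / interpolation argument). First I would fix $T \in (0, T_{\mathrm{max}}^{B,b})$. By Proposition \ref{pro1} (applied repeatedly, or by the blow-up criterion of Proposition \ref{pro2}) we have $M := \sup_{t \in [0,T]} \| u^{B,b}(t) \|_{H^s} < \infty$. The first key step is to show that for $(\tilde B, \tilde b)$ close enough to $(B,b)$, the solution $u^{\tilde B, \tilde b}$ also exists on $[0,T]$ and satisfies a uniform bound $\sup_{t\in[0,T]} \| u^{\tilde B,\tilde b}(t) \|_{H^s} \le 2M+1$, say. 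This is a continuation argument: on the time interval where $\| u^{\tilde B,\tilde b}(t) \|_{H^s}$ stays below the threshold, the difference $\| u^{\tilde B,\tilde b}(t) - u^{B,b}(t) \|_{L^2}$ is controlled by Lemma \ref{lem2}, and this smallness propagates to $H^{s-1}$ via interpolation with the uniform $H^s$ bounds, hence (using the transport structure, as in Proposition \ref{pro1}) keeps $\| u^{\tilde B,\tilde b}(t) \|_{H^s}$ from ever reaching the threshold on $[0,T]$; so the solution cannot break down before $T$. This simultaneously yields $T_{\mathrm{max}}^{B,b} \le \liminf_{(\tilde B,\tilde b)\to(B,b)} T_{\mathrm{max}}^{\tilde B,\tilde b}$.

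The second step is the actual $H^s$ convergence. Here I would invoke the Bona--Smith approximation argument, exactly as it is used in the proof of the preceding theorem in Section \ref{sec4}. Namely, mollify the initial data: write $u_0^{B,b} = J_\delta u_0^{B,b} + (I - J_\delta) u_0^{B,b}$ and similarly for $u_0^{\tilde B,\tilde b}$, where $J_\delta$ is a smoothing operator. For the smooth pieces one has higher-order ($H^{s+1}$ or $H^{s+2}$) bounds on the solutions that are uniform in $(\tilde B,\tilde b)$ near $(B,b)$ (again by Proposition \ref{pro1}-type estimates, since the Fourier multiplier $\tfrac{2Bb\xi}{b^2+\xi^2}$ has operator norm $\le B$ uniformly), so the difference of the smooth solutions is controlled in $H^s$ by interpolating its $L^2$ smallness (from Lemma \ref{lem2}, noting the right-hand side there $\to 0$ as $(\tilde B,\tilde b)\to(B,b)$ when the initial data converge in $H^s$) against these uniform higher-order bounds. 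For the rough pieces one uses that $\|(I-J_\delta) u_0\|_{H^s} \to 0$ as $\delta \to 0$, uniformly over the family, together with the $H^s$ persistence of the flow. A standard $3\varepsilon$ argument — choose $\delta$ small to handle the rough tails, then $(\tilde B,\tilde b)$ close to kill the $L^2$-type difference of the smooth parts — closes the estimate.

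The main obstacle is the $H^s$ convergence, not the lifespan bound: the quasilinear transport term $u \partial_x u$ loses a derivative, so one cannot directly run Gronwall in $H^s$ on the difference $u^{\tilde B,\tilde b} - u^{B,b}$ (the commutator estimates would leave an uncontrolled $\| u^{\tilde B,\tilde b} - u^{B,b}\|_{H^s}$ times $\| u^{B,b}\|_{H^{s+1}}$, and $u^{B,b}$ need not lie in $H^{s+1}$). The Bona--Smith trick is precisely what circumvents this, and the one point requiring care is that all the auxiliary higher-regularity bounds must be uniform as $(\tilde B,\tilde b)\to(B,b)$; this is guaranteed because every place where $B$ or $b$ enters the energy estimates of Proposition \ref{pro1} it does so only through the bound $\|\mathcal{F}^{-1}[\tfrac{2Bb\xi}{b^2+\xi^2}\widehat{w}]\|_{H^s} \le B\|w\|_{H^s}$, which is locally uniform in the parameters. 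I would also remark that, since the nonlocal term is in fact smoothing of order one (the multiplier decays like $|\xi|^{-1}$), the contribution of the difference of the two nonlocal operators applied to $u^{B,b}$ is already controlled in $H^{s-1}$, hence by interpolation contributes an $o(1)$ term in $H^s$ once the $L^1$-difference of the kernels is small, which is the content of \eqref{eqlem2.5}; this makes the parameter-dependence part genuinely easier than the initial-data-dependence part handled by Bona--Smith.
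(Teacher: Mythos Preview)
Your proposal is correct and uses the same two ingredients as the paper's proof: the $L^2$ stability estimate of Lemma~\ref{lem2} together with the Bona--Smith approximation to upgrade to $H^s$ convergence, with the uniform $H^s$ bounds supplied by the persistence estimate (Proposition~\ref{pro1} / the Koch--Tzvetkov inequality). The one organizational difference is that the paper does \emph{not} run a global bootstrap on $[0,T]$ as you outline; instead it fixes a short step $t_0 < \min\{\tfrac{C}{(1+B)\|u^{B,b}\|_{L^\infty([0,T];H^s)}},\tfrac{C}{1+B}\}$ on which Proposition~\ref{pro1} automatically gives the doubling bound $\|u^{\tilde B,\tilde b}(t)\|_{H^s}\le 2\|u_0^{\tilde B,\tilde b}\|_{H^s}$ for $(\tilde B,\tilde b)$ close to $(B,b)$, applies Lemma~\ref{lem2} and then Bona--Smith on $[0,t_0]$ to get $u^{\tilde B,\tilde b}(t_0)\to u^{B,b}(t_0)$ in $H^s$, and iterates finitely many times to reach $T$. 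This sidesteps the continuation argument you sketch (which works, but requires you to note that the hypotheses of Lemma~\ref{lem2} can be relaxed from the doubling bound to any uniform $H^s$ bound, and that $H^{s-1}$ closeness controls $\|u_x\|_{L^\infty}$ so the Koch--Tzvetkov exponential stays uniformly bounded). Your route is a legitimate alternative and arguably more self-contained once set up; the paper's iteration is shorter to write because the uniform bound on each step comes for free from the local theory.
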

\begin{proof}
Let $ T \in (0, T_{\mathrm{max}}^{B,b}) $, and fix $ t_0< 
\min \left \{\frac{C}{(1+B) \| u^{B,b} \|_{L^{\infty}([0,T];H^s)}}, \, \frac{C}{1+B}\right \}$.
By the assumption of $ u_0^{\tilde{B},\tilde{b}} $ and Proposition \ref{pro1}, 
when $ (\tilde{B},\tilde{b}) $ is close enough to $ (B,b)$,
$ u^{\tilde{B},\tilde{b}}(t) $ and $ u^{B,b}(t) $ exist on $ [0,t_0]$.
Furthermore,
$ \| u^{\tilde{B},\tilde{b}}(t) \|_{H^s} \le 2 \| u^{\tilde{B},\tilde{b}}_0 \|_{H^s}$
and
$ \| u^{B,b}(t) \|_{H^s} \le 2 \| u^{B,b}_0 \|_{H^s}$ 
hold there.
Therefore from Lemma \ref{lem2}, it follows that
\begin{align}
\lim_{(\tilde{B},\tilde{b}) \rightarrow (B,b)}\|u^{\tilde{B},\tilde{b}} - u^{B,b}\|_{L^{\infty}([0,t_0];L^2)} =0. \notag
\end{align}
Moreover, for $ t \in [0,t_0]$, we have the following inequality (see \cite[Lemma 3.6]{koch2003local}):
\begin{align}
\| u^{\tilde{B},\tilde{b}}(t) \|_{H^s} 
& \le \| u_0^{\tilde{B},\tilde{b}} \|_{H^s} \mathrm{exp}(c \|u^{\tilde{B},\tilde{b}}_x \|_{L^1((0,t_0);L^{\infty})}) \notag \\
& \le \| u_0^{\tilde{B},\tilde{b}} \|_{H^s} \mathrm{exp}(2 c \, t_0 \|u_0^{\tilde{B},\tilde{b}} \|_{H^s}). \notag
\end{align}
We also have the same inequality for $u^{B,b}$.
Note that the constant $c$ in the above estimate is independent of $(\tilde{B},\tilde{b})$.
Hence by the Bona-Smith approximation argument (see \cite{bona1975initial}), it follows that  
\begin{align}
\lim_{(\tilde{B},\tilde{b}) \rightarrow (B,b)}\|u^{\tilde{B},\tilde{b}} - u^{B,b}\|_{L^{\infty}([0,t_0];H^s)} =0. \notag
\end{align} 
In particular, $ u^{\tilde{B},\tilde{b}}(t_0) \rightarrow u^{B,b}(t_0)  \, \, \mathrm{in} \, \, H^s$.
Iterating this argument, we complete the proof.
\end{proof}

Combining this theorem with Theorem \ref{thm1}, we have Theorem \ref{thm11}.

\begin{proof}[Proof of Theorem \ref{thm11}]
We first prove that $G(u_0)$ is a closed set of $\mathbf{R}^2$.
Let $(B_n, b_n) \in G(u_0)$ and $(B_n, b_n) \rightarrow (B,b) \in [0,\infty)\times [0,\infty)$.

Suppose that $B=0$ or $b=0$.
Then $B_n$ or $b_n$ is close to zero for $n$ sufficiently large.
Also, $B_n$ and $b_n$ are bounded.
Therefore by Theorem \ref{thm1}, 
the solution with $(B_n,b_n)$ blows up in finite time for $n$ sufficiently large.
In other words, $(B_n, b_n) \notin G(u_0)$.
This is a contradiction. 
Thus, $(B,b) \in (0,\infty) \times(0,\infty)$.

Next, suppose that $(B,b) \in B(u_0)$. 
By Theorem \ref{thm43}, we have
\begin{align}
\label{eqthm3.2}
\liminf_{t \uparrow T_{max}^{B,b}} \inf_{x \in \mathbf{R}} u_x^{B,b}(t,x) = - \infty.
\end{align}
On the other hand, by Theorem \ref{thm2}, we have
\begin{align}
\label{eqthm3.1}
\lim_{n \rightarrow \infty} \| u^{B_n, b_n} - u^{B,b} \|_{L^{\infty}([0,T];H^s)}=0
\end{align} 
for any $T < T_{max}^{B,b} $.
In general, $ | \inf f - \inf g| \le \| f-g \|_{L^{\infty}}$ holds. 
Using this inequality, Sobolev's inequality and (\ref{eqthm3.1}), we obtain
\begin{align}
\begin{aligned}
\label{eqthm3.3}
| \inf_{x \in \mathbf{R}}u_x^{B_n,b_n}(t,x) - \inf_{x \in \mathbf{R}} u_x^{B,b}(t,x)| & \le \| u_x^{B_n,b_n}(t) - u_x^{B,b}(t) \|_{L^{\infty}} \\
& \le \| u^{B_n,b_n} - u^{B,b} \|_{L^{\infty}([0,T];H^s)} \\
& \rightarrow 0, \quad (n \rightarrow \infty)
\end{aligned}
\end{align}
for any t $ \in [0, T] $.
From (\ref{eqthm3.2}) and (\ref{eqthm3.3}), 
we can take  a subsequence $(B_{n(k)},b_{n(k)})$ and $ t_k<T_{max}^{B,b}$ such that
for all $ k \in \mathbf{N}$,
\begin{align*}
 \inf_{x \in \mathbf{R}} {u_x^{B_{n(k)},b_{n(k)}}(t_k,x)}<-k.
\end{align*}
%
%
Without loss of generality, we can assume that $(B_n,b_n)$ itself and $t_n<T_{max}^{B,b}$ satisfy 
\begin{align}
\label{eqthm3.4}
\inf_{x \in \mathbf{R}} {u_x^{B_n,b_n}(t_n,x)}<-n
\end{align}
for all $ n \in \mathbf{N}$.

We will apply Theorem \ref{thm1} to $ u^{B_n,b_n}(t_n)$.
Let 
\begin{align*}
F_n(t,x_0)=2 B_n  b_n  u^{B_n,b_n}(t_n,x_0)  + B_n b_n^{\frac{3}{2}} \| u_0 \|_{L^2}+ 2 B_n^2 b_n^{\frac{3}{2}} \| u_0 \|_{L^2} t.
\end{align*}
Using the inequality (\ref{eqlem1.3}), we obtain
\begin{align*}
&F_n(t,x_0) 
\le 2 B_n  b_n \| u_0 \|_{L^{\infty}} + B_n b_n^{\frac{3}{2}} \| u_0 \|_{L^2} +  2 B_n^2 b_n^{\frac{3}{2}} \|u_0\|_{L^2} t_n + 2 B_n^2 b_n^{\frac{3}{2}} \| u_0 \|_{L^2} t \notag 
\end{align*}
for any $x_0 \in \mathbf{R}$.
Set $t = T_{max}^{B,b}$.
Since  $B_n$ and $b_n$ are bounded, and $ t_n<T_{max}^{B,b}$, we can write
\begin{align}
\label{eq759}
F_n(T_{max}^{B,b},x_0) \le \gamma_1 + \gamma_2 \, T_{max}^{B,b}  
\end{align}
for some $ \gamma_1, \gamma_2 >0$.
From (\ref{eqthm3.4}) and \eqref{eq759} it follows that
\begin{align}
\inf_{x \in \mathbf{R}}{u_x^{B_n,b_n}(t_n,x)}< -  \left(\frac{F_n(T_{max}^{B,b},x_0)^{\frac{1}{4}}+ \sqrt{F_n(T_{max}^{B,b},x_0)^{\frac{ 1}{2}}
+\frac{16}{T_{max}^{B,b}}  }}{2} \right)^2 \notag 
\end{align}
for $n$ sufficiently large.
Thus by Theorem \ref{thm1}, $u^{B_n,b_n}$ blows up in finite time, which implies $(B_n,b_n) \in B(u_0)$.
This contradicts our assumption.
Hence $(B,b) \in G(u_0)$.

As a result, $ G(u_0) $ is a closed set of $\mathbf{R}^2$.
Since $ B(u_0) = \left( G(u_0) \cup ((0,\infty)\times(0,\infty))^{\mathrm{c}} \right)^{\mathrm{c}}$,
$B(u_0)$ is an open set of $\mathbf{R}^2$.
\end{proof}

\begin{thm}
Assume that $s\ge3$ and $u^{\tilde{B},\tilde{b}}_0, \, u^{{B},{b}}_0 \in H^s$. 
Let $T_{\mathrm{max}}^{\tilde{B},\tilde{b}}$ and $T_{\mathrm{max}}^{B,b}$ 
be the maximal existence time of the corresponding solutions $u^{\tilde{B},\tilde{b}}$ and $u^{B,b}$ of (\ref{eq2}), respevtively. 
If 
\begin{align}
\lim_{(\tilde{B},\tilde{b}) \rightarrow (B,b)}\|u_0^{\tilde{B},\tilde{b}} - u_0^{B,b}\|_{H^s} =0, \notag 
\end{align}
then 
\begin{align}
\lim _{(\tilde{B},\tilde{b}) \rightarrow (B,b)} T_{\mathrm{max}}^{\tilde{B},\tilde{b}} =T_{\mathrm{max}}^{B,b}  \notag 
\end{align}

\end{thm}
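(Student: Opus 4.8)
The bound $T_{\mathrm{max}}^{B,b}\le\liminf_{(\tilde B,\tilde b)\to(B,b)}T_{\mathrm{max}}^{\tilde B,\tilde b}$ is already contained in Theorem \ref{thm2}, so the plan is to prove the reverse inequality $\limsup_{(\tilde B,\tilde b)\to(B,b)}T_{\mathrm{max}}^{\tilde B,\tilde b}\le T_{\mathrm{max}}^{B,b}$; together the two give the claimed limit. If $T_{\mathrm{max}}^{B,b}=\infty$ the inequality is automatic, and if $u_0^{B,b}\equiv0$ then $u^{B,b}\equiv0$ is global, so I may assume $0<T_{\mathrm{max}}^{B,b}<\infty$ and $u_0^{B,b}\not\equiv0$ (hence $\|u_0^{B,b}\|_{L^2}>0$; recall $B,b>0$). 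Fix $\epsilon>0$; the goal is a neighbourhood $\mathcal{B}$ of $(B,b)$ on which $T_{\mathrm{max}}^{\tilde B,\tilde b}<T_{\mathrm{max}}^{B,b}+\epsilon$.

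The mechanism I would use is that the blow-up of $u^{B,b}$ is a wave-breaking: by Lemma \ref{lem1} (and $T_{\mathrm{max}}^{B,b}<\infty$), $M_0:=\sup_{0\le t<T_{\mathrm{max}}^{B,b}}\|u^{B,b}(t)\|_{L^\infty}<\infty$, whereas Theorem \ref{thm43} gives $\liminf_{t\uparrow T_{\mathrm{max}}^{B,b}}\inf_x u^{B,b}_x(t,x)=-\infty$. After fixing, as in the next paragraph, a large threshold $N$ depending only on $\epsilon$ and on uniform data near $(B,b)$, I would pick $t^*\in(T_{\mathrm{max}}^{B,b}-\tfrac{\epsilon}{2},\,T_{\mathrm{max}}^{B,b})$ and $x^*\in\mathbf{R}$ with $u^{B,b}_x(t^*,x^*)=\inf_x u^{B,b}_x(t^*,x)<-2N$, the infimum being attained since $u^{B,b}(t^*)\in H^s$ with $s\ge3$. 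Applying Theorem \ref{thm2} on $[0,t^*]\subset[0,T_{\mathrm{max}}^{B,b})$, there is a neighbourhood $\mathcal{B}$ of $(B,b)$ so that for $(\tilde B,\tilde b)\in\mathcal{B}$ the solution $u^{\tilde B,\tilde b}$ exists past $t^*$ and, by $H^s\hookrightarrow W^{1,\infty}$, satisfies $\|u^{\tilde B,\tilde b}(t^*)\|_{L^\infty}\le M_0+1$ and $\|u^{\tilde B,\tilde b}_x(t^*)-u^{B,b}_x(t^*)\|_{L^\infty}<N$, whence $u^{\tilde B,\tilde b}_x(t^*,x^*)<-N$; shrinking $\mathcal{B}$ further, $\tilde B,\tilde b$ and $\|u_0^{\tilde B,\tilde b}\|_{L^2}=\|u^{\tilde B,\tilde b}(t^*)\|_{L^2}$ lie between fixed positive constants.

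Now I would restart \eqref{eq2} at $t^*$: since \eqref{eq2} is autonomous, $t\mapsto u^{\tilde B,\tilde b}(t^*+t)$ is, by Proposition \ref{pro1}, the unique solution with parameters $\tilde B,\tilde b$ and datum $u^{\tilde B,\tilde b}(t^*)\in H^s$, and its blow-up time is $T_{\mathrm{max}}^{\tilde B,\tilde b}-t^*$. I apply Theorem \ref{thm1} to it at $x^*$ with $\alpha=1$; the relevant function is $F(T,x^*)=2\tilde B\tilde b\,u^{\tilde B,\tilde b}(t^*,x^*)+\tilde B\tilde b^{3/2}\|u_0^{\tilde B,\tilde b}\|_{L^2}+2\tilde B^2\tilde b^{3/2}\|u_0^{\tilde B,\tilde b}\|_{L^2}T$. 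Because $|u^{\tilde B,\tilde b}(t^*,x^*)|\le M_0+1$ and the parameters are pinned between positive constants, one can fix a single large $T$, depending only on $\epsilon$ and the uniform data, for which $0\le F(T,x^*)\le F_+$ with $F_+$ a uniform constant, so that
\[
\left(\frac{F(T,x^*)^{1/4}+\sqrt{F(T,x^*)^{1/2}+16/T}}{2}\right)^2\le\left(\frac{F_+^{1/4}+\sqrt{F_+^{1/2}+16/T}}{2}\right)^2=:C_\epsilon .
\]
Taking $N>\max\{C_\epsilon,\,4F_+^{1/2},\,16/\epsilon\}$, the inequality $u^{\tilde B,\tilde b}_x(t^*,x^*)<-N\le-C_\epsilon$ implies hypothesis \eqref{eqthm1.1}, so the blow-up-time estimate of Theorem \ref{thm1} yields
\[
T_{\mathrm{max}}^{\tilde B,\tilde b}-t^*\le-\frac{4}{u^{\tilde B,\tilde b}_x(t^*,x^*)+\sqrt{-u^{\tilde B,\tilde b}_x(t^*,x^*)}\,F(T,x^*)^{1/4}}\le\frac{8}{-u^{\tilde B,\tilde b}_x(t^*,x^*)}<\frac{8}{N}<\frac{\epsilon}{2},
\]
the middle step using $-u^{\tilde B,\tilde b}_x(t^*,x^*)>N\ge4F(T,x^*)^{1/2}$. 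Hence $T_{\mathrm{max}}^{\tilde B,\tilde b}<t^*+\tfrac{\epsilon}{2}<T_{\mathrm{max}}^{B,b}+\epsilon$, and letting $\epsilon\downarrow0$ gives $\limsup_{(\tilde B,\tilde b)\to(B,b)}T_{\mathrm{max}}^{\tilde B,\tilde b}\le T_{\mathrm{max}}^{B,b}$, as required.

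The main obstacle is making $F_+$, $C_\epsilon$ and the threshold $N$ genuinely uniform in $(\tilde B,\tilde b)$ near $(B,b)$. This is exactly where the wave-breaking structure is essential: it keeps $\|u^{B,b}(t)\|_{L^\infty}$ — and, through Theorem \ref{thm2} and $H^s\hookrightarrow W^{1,\infty}$, also $\|u^{\tilde B,\tilde b}(t^*)\|_{L^\infty}$ — bounded up to $T_{\mathrm{max}}^{B,b}$, so that $F(T,x^*)$ stays under control, while at the same time Theorem \ref{thm2} transfers the large negative slope of $u^{B,b}$ at $t^*$ to $u^{\tilde B,\tilde b}$ at $t^*$. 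One must also respect the order of the choices: first $\epsilon$, then $T$ and $N$, then $t^*$ and $x^*$, and only then the neighbourhood $\mathcal{B}$.
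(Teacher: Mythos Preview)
Your proof is correct and follows essentially the same route as the paper's: both obtain the lower bound $\liminf T_{\mathrm{max}}^{\tilde B,\tilde b}\ge T_{\mathrm{max}}^{B,b}$ from Theorem~\ref{thm2}, and for the upper bound both pick a time $t^*<T_{\mathrm{max}}^{B,b}$ where $\inf_x u_x^{B,b}(t^*,x)$ is very negative, transfer this to $u^{\tilde B,\tilde b}(t^*)$ via the continuous dependence of Theorem~\ref{thm2}, and then apply Theorem~\ref{thm1} (with $\alpha=1$) to the restarted solution to force blow-up within time $O(1/N)$. The paper phrases this along a sequence $(B_n,b_n)$ with threshold $n$ and takes $T=T_{\mathrm{max}}^{B,b}$ in $F_n$, while you work with neighbourhoods and a separately chosen large $T$; these are cosmetic differences only.
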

\begin{proof}
If $ T_{\mathrm{max}}^{B,b} = \infty $, the claim immediately follows from Theorem \ref{thm2}.

We consider the case of $ T_{\mathrm{max}}^{B,b} < \infty $.
Fix $(B_n,b_n) \in \mathbf{R}_+ \times \mathbf{R}_+$ such that $(B_n,b_n) \rightarrow (B,b) $.
As in the proof of Theorem \ref{thm11}, 
we can assume, without loss of generality, that
there exists $ t_n<T_{max}^{B,b}$ satisfying
\begin{align}
\label{pr}
\inf_{x \in \mathbf{R}} {u_x^{B_n,b_n}(t_n,x)}<-n
\end{align}
for all $ n \in \mathbf{N}$,
and 
can estimate 
\begin{align}
\label{pr1}
F_n(T_{max}^{B,b},x_0) \le \gamma_1 + \gamma_2 \, T_{max}^{B,b} 
\end{align}
for some $ \gamma_1, \gamma_2 >0$.
Then, for $n$ sufficiently large, we have
\begin{align}
\inf_{x \in \mathbf{R}}{u_x^{B_{n},b_{n}}(t_n,x)}< -  \left(\frac{F_{n}(T_{max}^{B,b},x_0)^{\frac{1}{4}}+ \sqrt{F_{n}(T_{max}^{B,b},x_0)^{\frac{ 1}{2}}
+\frac{16}{T_{max}^{B,b}}  }}{2} \right)^2. \notag 
\end{align}
From Theorem \ref{thm1}, \eqref{pr} and \eqref{pr1}, we obtain 
\begin{align*}
T_{max}^{B_{n},b_{n}} 
 \le T_{max}^{B,b} + \frac{C}{n},
\end{align*}
where the constant $C$ is independent of $n$.
Thus we get 
\begin{align*}
\limsup_{n \rightarrow \infty} T_{max}^{B_{n},b_{n}}  \le T_{max}^{B,b}.
\end{align*}
Combining this with Theorem \ref{thm2}, we complete the proof.
\end{proof}

\section{Proof of Theorem \ref{thm7}}
\label{sec6}
In this section, we give the proof of Theorem \ref{thm7}. 

%
%
%
%
%
%
%


\begin{proof}[Proof of Theorem \ref{thm7}]
Let $ \delta >0$ be sufficiently small.
We define  
\begin{align}
\mathcal{A} = \left \{ 
\begin{aligned}
(q,r)  \in (2,\infty] &\times (2,\infty) \, :  \, \frac{1}{q}+\frac{1}{n} \cdot \frac{2}{r} \le \frac{1}{n}-\delta; \notag \\
&\frac{1}{q} \le \frac{p^*-1}{n} -1 -(p^*-1)\delta; \, r<\frac{2}{n}(p-1)/\delta \notag
\end{aligned}
\right \}. \notag
\end{align}
For simplicity we write $ \| \cdot \|_{L^{q}_T;X}$ instead of $ \| \cdot \|_{L^{q}((0,T);X)}$.
Let $ \alpha \ge \frac{1}{2}$.
In the same manner as \cite[pp. 2612-2613]{stefanov2010well}, we obtain
\begin{align}
\begin{aligned}
\label{111}
\|u\|_{L^{q}_T;W^{\alpha,r}} &\le C( \|u_0\|_{H^{\alpha+\frac{1}{2}}} 
+\|u_0\|_{W^{\alpha+ \gamma(1-\frac{2}{r}) ,r'}} ) \\
& \quad +C \| u \|_{L^{\infty}_T;H^{\alpha + 1 + \mathrm{max}\{\frac{1}{2},\gamma \}}} 
\Big(\|u\|^{p-1}_{L^{\infty}_T;L^{\infty}_x}+\|u\|^{p-1}_{L^{(p-1)\beta}_T;L^{(p-1)\frac{2r}{r-2}}}\Big),
\end{aligned}
\end{align}
where $ 1+ \frac{1}{q} = \frac{1}{n}(1-\frac{2}{r})+\frac{1}{\beta}$.

We show that
$ \left( (p-1)\beta,(p-1)\frac{2r}{r-2} \right) \in \mathcal{A} $ for $ p \ge p^* $.
First, we have
\begin{align}
\frac{1}{(p-1)\beta}+\frac{1}{n} \cdot \frac{2}{(p-1)\frac{2r}{r-2}} &= \frac{1}{p-1}\Big(\frac{1}{\beta}+\frac{1}{n} \cdot (1-\frac{2}{r})\Big) \notag \\
& = \frac{1}{p-1}\Big(1+\frac{1}{q}\Big) \notag \\
& \le \frac{1}{p-1}\Big(\frac{p^*-1}{n}-(p^*-1)\delta \Big) \notag \\
& \le \frac{1}{n}-\delta. \notag
\end{align}
Next, we have
\begin{align}
\frac{1}{(p-1)\beta}&=\frac{1}{p-1}\Big(1+\frac{1}{q}-\frac{1}{n}(1-\frac{2}{r})\Big) \notag \\
& \le \frac{1}{p-1}(1-\delta) \notag \\
& \le \frac{1}{p^*-1}(1-\delta).\notag
\end{align}
Here if we get
\begin{align}
\label{1}
\frac{1}{p^*-1}(1-\delta)\le \frac{p^*-1}{n}-1-(p^*-1)\delta,
\end{align}
the desired inequality follows.
Note that \eqref{1} is equivalent to 
\begin{align}
\label{2}
\Big((p^*-1)-\frac{1}{p^*-1}\Big)\delta \le \frac{p^*-1}{n}-1-\frac{1}{p^*-1}.
\end{align}
By the definition of $p^*$, we have
\begin{align}
\frac{p^*-1}{n}-1-\frac{1}{p^*-1}>0. \notag
\end{align}
Therefore, $ \delta>0$ sufficiently small yields \eqref{2}.
Finally, 
\begin{align}
(p-1)\frac{2r}{r-2}&  \le \frac{2}{n}(p-1)\frac{1}{\frac{1}{n}(1-\frac{2}{r})} \notag \\
& \le \frac{2}{n}(p-1)/\delta. \notag
\end{align}
Thus, $ \left( (p-1)\beta,(p-1)\frac{2r}{r-2} \right) \in \mathcal{A} $.

We define 
\begin{align}
\| u \|_{T,\mathcal{A},\alpha}:= \sup_{(q,r)\in \mathcal{A}} \|u \|_{L^q_T;W^{\alpha,r}}. \notag
\end{align}
Since $1<r'<2$,
by an interpolation inequality, we obtain
\begin{align}
\label{222}
\|u_0\|_{H^{\alpha+\frac{1}{2}}}+\|u_0\|_{W^{\alpha+\gamma(1-\frac{2}{r}),r'}}
\le \|u_0\|_{H^{\alpha+\mathrm{max}\{\frac{1}{2},\gamma \}}}+\|u_0\|_{W^{\alpha+\gamma,1}}. 
\end{align}
As in the above argument, we can show that $ (\infty,4) \in \mathcal{A}$ 
for $ \delta>0$ sufficiently small.
Hence, we obtain
\begin{align}
\label{333}
\|u\|_{L^{\infty}_T;L^{\infty}} \le \|u\|_{L^{\infty}_T;W^{\frac{1}{2},4}} \le \|u\|_{T,\mathcal{A},\alpha} 
\end{align}
for $ \alpha \ge \frac{1}{2}$. 
Because of $ \left( (p-1)\beta,(p-1)\frac{2r}{r-2} \right) \in \mathcal{A} $, we have
\begin{align}
\label{444}
\|u\|_{L^{(p-1)\beta}_T;L^{(p-1)\frac{2r}{r-2}}} \le \| u \|_{T,\mathcal{A},\alpha}.
\end{align}
Combining \eqref{111}, \eqref{222}, \eqref{333} and \eqref{444}, we get
\begin{align}
\begin{aligned}
\label{3}
\|u\|_{T,\mathcal{A},\alpha} \le C ( \|u_0\|_{H^{\alpha+\mathrm{max}\{\frac{1}{2},\gamma \}}}
& +\|u_0\|_{W^{\alpha+\gamma,1}} \\
&  +\| u \|_{L^{\infty}_T;H^{\alpha + 1 + \frac{1}{2}}} \|u\|^{p-1}_{T,\mathcal{A},\alpha}).
\end{aligned}
\end{align}

Let $ \alpha=\frac{3}{2}$ and 
$ \|u_0\|_{H^{\frac{5}{2}+\mathrm{max}\{\frac{1}{2},\gamma\}}}+\|u_0\|_{W^{\frac{3}{2}+\gamma,1}}< \epsilon $.
Also, set $s=\frac{5}{2}+\mathrm{max}\{\frac{1}{2},\gamma\}$.
We prove the boundness of $ \| u(t) \|_{H^s} $ by contradiction.
This shows that the solution with initial data $ u_0$ is global.

Assume that $ \| u(t) \|_{H^s} $ is unbounded.
We define
\begin{align}
T^* = \inf\{t>0:\|u(t)\|_{H^s} = 4 \epsilon \}. \notag
\end{align}
Clearly, $ 0<T^*<\infty $ and $ \| u\|_{L^{\infty}_{T^*}; H^s} \le 4 \epsilon$.
From \eqref{3}
it follows that
\begin{align}
\|u\|_{T,\mathcal{A},\alpha} \le C ( \epsilon + \epsilon \|u\|^{p-1}_{T,\mathcal{A},\alpha}) \notag
\end{align}
for all $ 0<T<T^*$.
By a continuous argument, if $ \epsilon $ is sufficiently small, 
we obtain $ \|u\|_{T,\mathcal{A},\alpha} \le 2 C \epsilon $.
Note that 
\begin{align}
\|u(t)\|_{H^s} \le \|u_0\|_{H^s} \mathrm{exp}(c \|u\|^{p-1}_{L^{p-1}_T;W^{1,\infty}}) \notag 
\end{align}
holds as far as the solution exists (see \cite[p. 2611]{stefanov2010well}).
Therefore, we have
\begin{align}
\|u(t)\|_{H^s} \le 3 \epsilon \notag
\end{align}
as long as $\|u\|^{p-1}_{L^{p-1}_T;W^{1,\infty}} \le \frac{1}{c}$.
Using Sobolev's inequality, we obtain
\begin{align}
\|u\|_{L^{p-1}_T;W^{1,\infty}} \le C \|u\|_{L^{p-1}_T;W^{\frac{3}{2},\tilde{r}}} \notag
\end{align}
for $ 2 < \tilde{r} < \infty$.
Here we can choose $ \tilde{r} \in (2,\infty)$ such that $ ((p-1),\tilde{r}) \in \mathcal{A}$.
Indeed, as can be seen from the argument above, we have
\begin{align}
\frac{1}{p-1}\le \frac{1}{p^*-1} \le \frac{p^*-1}{n} -1 -(p^*-1)\delta \notag
\end{align}
for $ \delta>0$ sufficiently small.
Therefore, what is left is to find $ \tilde{r} \in (2,\infty)$ satisfying
\begin{align}
\frac{n}{2(p-1)}\delta<\frac{1}{\tilde{r}}\le \frac{1}{2}-\frac{n}{2(p^*-1)}-\frac{n}{2}\delta. \notag
\end{align}
It is easily seen that this is equivalent to
\begin{align}
\Big(\frac{n}{2(p-1)}+\frac{n}{2}\Big)\delta<\frac{1}{2}-\frac{n}{2(p^*-1)}. \notag
\end{align}
Since $ \frac{1}{2}-\frac{n}{2(p^*-1)}>0$ by the definition of $p^*$, 
this inequality holds for $ \delta>0 $ sufficiently small.
Thus, the claim follows.
Then we have
\begin{align}
\|u\|^{p-1}_{L^{p-1}_T;W^{1,\infty}} \le C \|u\|^{p-1}_{L^{p-1}_T;W^{\frac{3}{2},\tilde{r}}}
\le \|u\|^{p-1}_{T,\mathcal{A},\alpha} \le (2 C \epsilon )^{p-1}. \notag
\end{align}
Hence, taking $ \epsilon>0 $ such that $ (2 C \epsilon )^{p-1}<\frac{1}{c}$, we obtain
\begin{align}
\|u(t)\|_{H^s} \le 3 \epsilon \notag
\end{align}
for all $0<T<T^*$.
This contradicts the definition of $ T^* $.
Therefore, $ \sup_{0<t<\infty}\|u(t)\|_{H^s} $ is bounded and $ u(t) $ is a global solution.
Moreover,
\begin{align}
\sup_{0<t<\infty}\|u(t)\|_{H^s} \le 4 \epsilon \notag
\end{align}
holds.
\end{proof}

\section{Decay estimate and the proof of Theorem \ref{thm8}}
\label{sec7}
In this section, we give a decay estimate for the free solution of the generalized Fornberg-Whitham equation
\begin{align}
\label{eq4}
\left \{
\begin{aligned}
& \partial_t u  + \int _{\mathbf{R}} B \mathrm{e}^{-b|x-\xi|}u_{\xi}(t,\xi) \, d \xi =0, \\
& u(0,x)=u_0(x). 
\end{aligned}
\right.
\end{align}
It is clear that the solution of (\ref{eq4}) is 
\begin{align}
T^{B,b}(t)u_0 = \mathcal{F}^{-1}\left[\mathrm{exp}\left(-\, i \, t \, \frac{2Bb \, \xi}{b^2+\xi^2}\right) \, \widehat{u_0}\, \right]. \notag
\end{align}
\begin{lem}
\label{eqlem2.1}
Let $\mu(\xi)= \frac{\xi}{1+\xi^2}$. Then 
\begin{align}
\mu^{(n)}(\xi) = \frac{(-1)^n \, n! \, \sum^{[\frac{n+1}{2}]}_{k=0} \, (-1)^k \, \binom{n+1}{2k} \, \xi^{n+1-2k}}{(1+\xi^2)^{n+1}}. \notag
\end{align}
\end{lem}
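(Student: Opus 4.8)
The plan is to decompose $\mu$ into simple complex partial fractions and differentiate termwise. Since $1+\xi^2=(\xi-i)(\xi+i)$, a one-line residue computation ($A=\lim_{\xi\to i}\xi/(\xi+i)=1/2$ and similarly at the other pole) gives
\[
\mu(\xi)=\frac{\xi}{(\xi-i)(\xi+i)}=\frac12\cdot\frac{1}{\xi-i}+\frac12\cdot\frac{1}{\xi+i}.
\]
First I would record the elementary identity $\dfrac{d^n}{d\xi^n}(\xi-a)^{-1}=(-1)^n\,n!\,(\xi-a)^{-(n+1)}$, valid for any constant $a$ with $\xi\ne a$; here this holds for all real $\xi$ because $a=\pm i$. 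Applying it to each term yields
\[
\mu^{(n)}(\xi)=\frac{(-1)^n\,n!}{2}\left(\frac{1}{(\xi-i)^{n+1}}+\frac{1}{(\xi+i)^{n+1}}\right).
\]

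Next I would put the two fractions over the common denominator $(\xi-i)^{n+1}(\xi+i)^{n+1}=(1+\xi^2)^{n+1}$ and expand the numerator $(\xi+i)^{n+1}+(\xi-i)^{n+1}$ by the binomial theorem. The terms carrying an odd power of $i$ cancel, while each even index $j=2k$ contributes $\binom{n+1}{2k}\xi^{n+1-2k}\big(i^{2k}+(-i)^{2k}\big)=2(-1)^k\binom{n+1}{2k}\xi^{n+1-2k}$, so that
\[
(\xi+i)^{n+1}+(\xi-i)^{n+1}=2\sum_{k=0}^{[\frac{n+1}{2}]}(-1)^k\binom{n+1}{2k}\xi^{n+1-2k}.
\]
Substituting this into the previous display and cancelling the factor $2$ produces exactly the asserted formula.

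As for difficulty, there is essentially no real obstacle: the only points needing a line of care are the validity of the partial fraction decomposition and the bookkeeping of the parity of the summation index in the binomial expansion. A purely real alternative by induction on $n$ is available but would require verifying the polynomial recursion $-(n+1)P_{n+1}(\xi)=(1+\xi^2)P_n'(\xi)-2(n+1)\xi P_n(\xi)$ for the numerator polynomials $P_n(\xi)=\sum_k(-1)^k\binom{n+1}{2k}\xi^{n+1-2k}$, which is more cumbersome than the complex computation above; hence I would present the complex-partial-fraction argument.
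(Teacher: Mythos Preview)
Your proof is correct. The partial-fraction decomposition $\mu(\xi)=\tfrac12\big((\xi-i)^{-1}+(\xi+i)^{-1}\big)$ differentiates cleanly, and the binomial expansion of $(\xi+i)^{n+1}+(\xi-i)^{n+1}$ indeed kills the odd-index terms and leaves $2\sum_{k}(-1)^k\binom{n+1}{2k}\xi^{n+1-2k}$, which after cancelling the factor $2$ gives the stated formula.

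The paper, by contrast, simply writes ``by mathematical induction, this lemma immediately follows'' and gives no details. So the two routes are genuinely different: the paper's intended argument is the real-variable induction you describe at the end (differentiating the quotient and checking the recursion on the numerator polynomials), whereas you bypass induction entirely by diagonalising the problem over $\mathbf{C}$. Your approach has the advantage of \emph{deriving} the closed form rather than merely verifying it, and it makes the structure of the numerator transparent as twice the real part of $(\xi+i)^{n+1}$; the inductive route stays within real analysis but, as you note, requires tracking the recursion $(1+\xi^2)P_n'-2(n+1)\xi P_n=-(n+1)P_{n+1}$, which is more bookkeeping. Either argument is acceptable here, and yours is arguably cleaner.
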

\begin{proof}
By mathematical induction, this lemma immediately follows.
\end{proof}

\begin{lem}[Van Der Corput]
\label{lem80}
Let $k \ge 2$ be an integer, $ \psi \in C_0^{\infty}(\mathbf{R})$ and $ \mu \in C^k(\mathbf{R})$ satisfy 
$ \mu^{(k)}(\xi) > \lambda >0$ on the support of $\psi$. Then
\begin{align}
\left| \int \mathrm{e}^{i \mu(\xi)}  \psi(\xi) d\xi \right| 
\le C \lambda^{-\frac{1}{k}} \left( \|\psi\|_{L^{\infty}}+ \|\psi'\|_{L^1} \right). \notag
\end{align}
\end{lem}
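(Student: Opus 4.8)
The plan is to establish this classical Van der Corput estimate by induction on $k$, after a preliminary reduction to the case $\psi\equiv 1$. Let $J=[\alpha,\beta]$ be an interval containing $\mathrm{supp}\,\psi$ with $\mu^{(k)}>\lambda$ on $J$, and put $F(\xi)=\int_{\alpha}^{\xi}\mathrm{e}^{i\mu(\eta)}\,d\eta$. Since $\psi\in C_0^{\infty}(\mathbf{R})$, integrating by parts gives $\int\mathrm{e}^{i\mu}\psi\,d\xi=-\int F(\xi)\psi'(\xi)\,d\xi$, so that $\bigl|\int\mathrm{e}^{i\mu}\psi\,d\xi\bigr|\le\bigl(\sup_{\xi}|F(\xi)|\bigr)\|\psi'\|_{L^1}$. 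Hence it suffices to prove the auxiliary bound
\begin{align}
\Bigl|\int_a^b \mathrm{e}^{i\mu(\xi)}\,d\xi\Bigr|\le C(k)\,\lambda^{-1/k} \notag
\end{align}
for every subinterval $[a,b]\subseteq J$, with $C(k)$ independent of $[a,b]$; the extra term $\|\psi\|_{L^{\infty}}$ in the statement is then harmless, being in fact dominated by $\|\psi'\|_{L^1}$ since $\psi$ has compact support.

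The auxiliary bound I would prove by induction on $k\ge 1$ (in the slightly more general form where $\mu^{(k)}>\lambda$ is replaced by $|\mu^{(k)}|\ge\lambda$, and, for $k=1$ only, with the extra hypothesis that $\mu'$ be monotone — a hypothesis automatically met wherever the recursion invokes the base case). For $k=1$: writing $\mathrm{e}^{i\mu}=\tfrac{1}{i\mu'}\tfrac{d}{d\xi}\mathrm{e}^{i\mu}$ and integrating by parts on $[a,b]$, the boundary terms are bounded by $2/\lambda$, while the remaining integral equals $\int_a^b\bigl|(1/\mu')'\bigr|\,d\xi=\bigl|1/\mu'(b)-1/\mu'(a)\bigr|\le 1/\lambda$ because $1/\mu'$ is monotone; hence $C(1)=3$. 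For the step $k\to k+1$ we may assume $\mu^{(k+1)}\ge\lambda$ (replacing $\mu$ by $-\mu$ if necessary), so $\mu^{(k)}$ is strictly increasing; let $c\in[a,b]$ be the point at which $|\mu^{(k)}|$ is smallest (its zero, if it has one, otherwise an endpoint). For a parameter $\delta>0$, split $[a,b]$ into its intersection with $(c-\delta,c+\delta)$, on which we use the trivial bound $2\delta$, and the at most two remaining pieces, on each of which $|\mu^{(k)}|\ge\lambda\delta$ and $\mu^{(k)}$ is monotone (since $\mu^{(k+1)}\ge\lambda$); applying the inductive hypothesis with parameter $\lambda\delta$ on each remaining piece gives a contribution $\le C(k)(\lambda\delta)^{-1/k}$. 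Adding up, $\bigl|\int_a^b\mathrm{e}^{i\mu}\,d\xi\bigr|\le 2\delta+2C(k)(\lambda\delta)^{-1/k}$, and optimizing by taking $\delta\sim\lambda^{-1/(k+1)}$ produces the claimed bound with a new constant $C(k+1)$ depending only on $C(k)$ and $k$.

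The step that needs care is precisely this splitting-and-optimization argument: one must check that on the pieces away from $c$ the inductive hypothesis genuinely applies — constant sign of $\mu^{(k)}$ and, when the recursion reaches $k=1$, monotonicity of $\mu'$, both of which follow from $\mu^{(k+1)}\ge\lambda$ — that the number of such pieces stays bounded (two), and that the constants $C(k)$ remain finite and interval-independent through the induction. Everything else is routine integration by parts. Combining the auxiliary bound with the reduction in the first paragraph then yields the stated inequality. I note that this is exactly the classical Van der Corput lemma (see, e.g., Stein, \emph{Harmonic Analysis}, Ch.\ VIII), so in a final version the argument could instead be condensed to a citation.
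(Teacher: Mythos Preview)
Your proof is correct and reproduces the classical argument from Stein's \emph{Harmonic Analysis}; the paper itself does not give a proof but simply cites that reference, which is exactly the condensation you suggest in your final sentence. So your approach coincides with the paper's (by reference), only spelled out in full.
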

\begin{proof}
See \cite[p. 334]{stein1993harmonic}.
\end{proof}

The following result is motivated by \cite[Theorem 3]{stefanov2010well}.
\begin{thm}
\label{54}
\begin{align}
\label{eqthm4.1}
& \|T^{B,b}(t)u_0\|_{L^2}=\|u_0\|_{L^2}, \\
\label{eqthm4.2}
& \|T^{B,b}(t)u_0\|_{L^r} \le C_{B,b} \, |t|^{-\frac{1}{3}(1-\frac{2}{r})} \|u_0\|_{W^{\frac{3}{2}(1-\frac{2}{r}),r'}}, \quad (2<r<\infty).
\end{align}
\end{thm}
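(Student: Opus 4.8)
The plan is to read off \eqref{eqthm4.1} at once from Plancherel's theorem, since the Fourier multiplier $e^{-it\,m(\xi)}$ with $m(\xi)=\frac{2Bb\,\xi}{b^2+\xi^2}$ has modulus $1$, and to obtain \eqref{eqthm4.2} by interpolating \eqref{eqthm4.1} against the endpoint dispersive estimate $\|T^{B,b}(t)u_0\|_{L^\infty}\le C_{B,b}\,|t|^{-1/3}\,\|u_0\|_{W^{3/2,1}}$, which is the core of the argument and follows the scheme of \cite[Theorem 3]{stefanov2010well}.

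For the dispersive estimate I would first absorb the loss of $\tfrac32$ derivatives: putting $\widehat g(\xi)=(1+\xi^2)^{3/4}\widehat{u_0}(\xi)$, so that $\|g\|_{L^1}=\|u_0\|_{W^{3/2,1}}$, one has $T^{B,b}(t)u_0=K_t*g$ with $\widehat{K_t}(\xi)=e^{-it m(\xi)}(1+\xi^2)^{-3/4}$. Since $(1+\xi^2)^{-3/4}\in L^1(\mathbf R)$, Young's inequality reduces the claim to the kernel bound $\|K_t\|_{L^\infty}\le C_{B,b}|t|^{-1/3}$ for the oscillatory integral $K_t(x)=\frac1{2\pi}\int e^{i(x\xi-t m(\xi))}(1+\xi^2)^{-3/4}\,d\xi$, uniformly in $x\in\mathbf R$; for $|t|\le1$ this is immediate, so one may take $|t|\ge1$.

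Next I would decompose $(1+\xi^2)^{-3/4}=\sum_{j\ge0}\psi_j(\xi)$ into Littlewood--Paley pieces supported in $\{|\xi|\sim2^j\}$, with $\|\psi_j\|_{L^\infty}+\|\psi_j'\|_{L^1}\sim2^{-3j/2}$, and bound each $I_j(x)=\int e^{i(x\xi-t m(\xi))}\psi_j(\xi)\,d\xi$ uniformly in $x$. Rescaling $m(\xi)=\tfrac{2B}{b}\mu(\xi/b)$ with $\mu(\eta)=\eta/(1+\eta^2)$ and using Lemma \ref{eqlem2.1}, one verifies that $m'$ vanishes only at $\xi=\pm b$, that $m''$ vanishes only at $\xi=0,\pm\sqrt3\,b$, that these zero sets are disjoint, that $m'''(\pm\sqrt3\,b)\neq0$, and that $|m'''(\xi)|\asymp_{B,b}|\xi|^{-4}$ for $|\xi|\gg b$. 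Splitting each $\psi_j$ by a partition of unity, the phase $x\xi-t m(\xi)$ on each piece is either non-stationary in $\xi$ (integrate by parts repeatedly, with rapid decay in $2^j$), or is handled by the van der Corput Lemma \ref{lem80}: with $k=2$ where $m''$ is bounded away from $0$, and --- crucially --- with $k=3$ both on a fixed neighbourhood of $\xi=\pm\sqrt3\,b$ (where $m''$ may vanish but $|m'''|\gtrsim_{B,b}1$, an Airy-type caustic) and on each high-frequency block (where $|m'''|\asymp_{B,b}2^{-4j}$). For $2^j\gg b$ the $k=3$ bound gives $|I_j(x)|\lesssim_{B,b}(|t|2^{-4j})^{-1/3}\cdot2^{-3j/2}=C_{B,b}|t|^{-1/3}2^{-j/6}$, a convergent dyadic series; $k=3$ is in fact the smallest order making $\sum_j$ converge, and it is what pins down the exponent $1/3$. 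The finitely many remaining low- and medium-frequency blocks each contribute $\lesssim_{B,b}|t|^{-1/3}$, so summing over $j$ yields $\|K_t\|_{L^\infty}\le C_{B,b}|t|^{-1/3}$. Finally, letting $T_z(t)$ be the operator with symbol $(1+\xi^2)^{-3z/4}e^{-it m(\xi)}$, one has $T_0(t)=T^{B,b}(t):L^2\to L^2$ with norm $1$ and $T_1(t):L^1\to L^\infty$ with norm $\le C_{B,b}|t|^{-1/3}$, and Stein's complex interpolation applied to $\{T_z(t)\}$ along $\frac1{p_\theta}=\frac{1+\theta}{2}$, $\frac1{r_\theta}=\frac{1-\theta}{2}$ (so $p_\theta=r_\theta'$) gives $\|T^{B,b}(t)u_0\|_{L^{r}}\le C_{B,b}|t|^{-\theta/3}\|u_0\|_{W^{3\theta/2,r'}}$ with $\theta=1-\tfrac2r$, which is \eqref{eqthm4.2} for $2<r<\infty$.

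The main obstacle is the uniform $L^\infty$ bound on $K_t$, where one must simultaneously control two mechanisms: the degenerate third-order stationary point of $x\xi-t m(\xi)$ at $\xi=\pm\sqrt3\,b$ --- which is what forces the exponent $-1/3$ and which, because the stationary point migrates with $x$, must be treated uniformly in $x$, the caustic $x/t=m'(\pm\sqrt3\,b)$ being the genuinely worst case --- and the high-frequency regime, where $m$ and all its derivatives decay like powers of $|\xi|^{-1}$, so that the van der Corput gain deteriorates with the dyadic index and has to be balanced exactly against the $\tfrac32$ derivatives supplied by the $W^{3/2,1}$-norm for the dyadic sum to converge.
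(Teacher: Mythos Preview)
Your argument is correct and shares the Van der Corput/dyadic scaffolding with the paper, but the two key technical choices differ. At high frequency the paper uses $k=2$ (from $|\mu''(\xi)|\ge\tfrac18|\xi|^{-3}$ for $|\xi|>\sqrt6$), obtaining the per-block bound $\sup_x\bigl|\int e^{ix\xi-it\mu(\xi)}\varphi_j(\xi)\,d\xi\bigr|\le C\,2^{3j/2}t^{-1/2}$; it then interpolates scale by scale via Riesz--Thorin and sums through the Littlewood--Paley square function, deferring to \cite[Theorem~3]{stefanov2010well} for that step. You instead take $k=3$ at high frequency, which together with the amplitude $(1+\xi^2)^{-3/4}$ yields the summable $|t|^{-1/3}2^{-j/6}$, so that the kernel $K_t$ is bounded directly and a single complex interpolation finishes; your remark that $k=3$ is the smallest order making the dyadic kernel sum converge is exactly the point---with $k=2$ each piece is $\sim t^{-1/2}$ with no decay in $j$, which is why the paper cannot sum the kernel and must go through Littlewood--Paley. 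Your route is thus more self-contained; the paper's route records the sharper $t^{-1/2}$ high-frequency decay, but since the degenerate points force the global rate $t^{-1/3}$ anyway, nothing is lost. The paper also first reduces to $B=\tfrac12$, $b=1$ via the scaling $T^{B,b}(t)[u_0](x)=T(2Bt)[u_0(b^{-1}\cdot)](bx)$, while you carry general $B,b$ throughout. One small omission: you flag only $\xi=\pm\sqrt3\,b$ as zeros of $m''$, but $m''(0)=0$ as well (the paper notes this); since $m'''(0)\neq0$ your $k=3$ treatment covers it, but it should be mentioned.
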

\begin{proof}
We check at once \eqref{eqthm4.1}.
It remains to prove \eqref{eqthm4.2}.

We first consider the case of $B=\frac{1}{2}$ and $b=1$.
For simplicity, we write $T(t)$ instead of $T^{\frac{1}{2},1}(t).$
Let $j \in \mathbf{Z}_+$ and $ \varphi_j(\xi)$ be the Littlewood-Paley functions.
By Lemma \ref{eqlem2.1}, we have
\begin{align}
& \mu''(\xi) = \frac{2 (\xi^3-3\xi)}{(1+\xi^2)^3}, \notag \\
& \mu'''(\xi) =\frac{-6(\xi^4-6\xi^2+1)}{(1+\xi^2)^4}.\notag
\end{align}
Hence, for $|\xi|>\sqrt{6}$,
we obtain
\begin{align}
|\mu''(\xi) |\ge \frac{1}{8}|\xi|^{-3}. \notag
\end{align}
Applying Lemma \ref{lem80}, 
we get 
\begin{align}
\sup_{x \in \mathbf{R}} \left| \int \mathrm{e}^{ix \xi} \mathrm{e}^{-it \mu(\xi)} \varphi_j(\xi) d\xi \right| 
\le C \, 2^{\frac{3}{2}j} \, t^{-\frac{1}{2}} \notag
\end{align}
for $ j \ge 4$.
On the other hand, 
since $ \mu''(0)= \mu''(\pm \sqrt{3})=0$, 
we can not apply Lemma \ref{lem80} with $k=2$ to the neighborhoods of $\xi =0, \pm \sqrt{3}$.
However, $ \mu'''(0) \neq 0$ and $\mu'''(\pm \sqrt{3}) \neq 0$ hold.
Therefore by Lemma \ref{lem80} with $k=2$ and with $k=3$, we obtain 
\begin{align}
\sup_{x \in \mathbf{R}} \left| \int \mathrm{e}^{ix \xi} \mathrm{e}^{-it \mu(\xi)} \varphi_j(\xi) d\xi \right| \notag
\le C \, t^{-\frac{1}{2}}
\end{align}
or
\begin{align}
\sup_{x \in \mathbf{R}} \left| \int \mathrm{e}^{ix \xi} \mathrm{e}^{-it \mu(\xi)} \varphi_j(\xi) d\xi \right| \notag
\le C \, t^{-\frac{1}{3}}
\end{align}
for $ 0 \le j \le 3$.
Using the Riesz-Thorin interpolation theorem and the Littlewood-Paley theory, 
we prove \eqref{eqthm4.2} with the special case of $B=\frac{1}{2}$ and $b=1$ (see \cite[Theorem 3]{stefanov2010well} for more details).
%
%
%

Since
\begin{align}
T^{B,b}(t)[u_0(\cdot)](x)= T(2Bt)[u_0(b^{-1}\, \cdot \, )](bx), \notag
\end{align}
we complete the proof.
\end{proof}

\begin{proof}[Proof of Theorem \ref{thm8}]
The proof of Theorem \ref{thm8}
immediately follows from Theorem \ref{thm7} and Theorem \ref{54}
\end{proof}

\section*{Acknowledgements}
I would like to thank Professor Hideo Takaoka for useful advice and persistent help.
I also thank Professor Nao Hamamuki for continued support and considerable encouragement.
I am grateful to Professor Hiroaki Aikawa for valuable comments.

\bibliographystyle{siam}
\bibliography{itasaka_paper}

\end{document}